\newcommand{\R}{{\Bbb R}}
\newcommand{\C}{{\Bbb C}}
\newcommand{\Z}{{\Bbb Z}}
\newcommand{\stepproofbegin}{\noindent{\it Proof. }}
\newcommand{\proofendcontinue}{\hfill \raisebox{.8mm}[0cm][0cm]{$\bigtriangledown$}\bigskip}
\newcommand{\proofend}{\hfill$\Box$\bigskip}
\newcommand{\tr}{\text{\upshape tr\,}}
\newcommand{\im}{\text{\upshape Im\,}}
\def\XXint#1#2#3{{\setbox0=\hbox{$#1{#2#3}{\int}$}
\vcenter{\hbox{$#2#3$}}\kern-.5\wd0}}
\newtheorem{theorem}{Theorem}[section]
\newtheorem{corollary}[theorem]{Corollary}
\newtheorem{lemma}[theorem]{Lemma}
\newtheorem{definition}[theorem]{Definition}
\newtheorem{remark}[theorem]{Remark}
\newtheorem{figuretext}{Figure}
\numberwithin{equation}{section}
\date{\today}
\title[Nonlinear Fourier Transforms]
{Nonlinear Fourier Transforms for the sine-Gordon equation in the Quarter Plane}
\author{Lin Huang and Jonatan Lenells}
\address{Department of Mathematics, KTH Royal Institute of Technology, \\ 100 44 Stockholm, Sweden.}
\email{linhuang@kth.se}
\email{jlenells@kth.se}
\begin{document}
\begin{abstract} 
\noindent
The solution of the sine-Gordon equation in the quarter plane can be expressed in terms of the solution of a matrix Riemann-Hilbert problem whose definition involves four spectral functions $a,b,A,B$. The functions $a(k)$ and $b(k)$ are defined via a nonlinear Fourier transform of the initial data, whereas $A(k)$ and $B(k)$ are defined via a nonlinear Fourier transform of the boundary values. 
In this paper, we provide an extensive study of these nonlinear Fourier transforms and the associated eigenfunctions under weak regularity and decay assumptions on the initial and boundary values. The results can be used to determine the long-time asymptotics of the sine-Gordon quarter-plane solution via nonlinear steepest descent techniques. 
\end{abstract}

\maketitle

\noindent
{\small{\sc AMS Subject Classification (2010)}: 37K15, 41A60, 35P25.}

\noindent
{\small{\sc Keywords}: Nonlinear Fourier transform, spectral function, sine-Gordon equation, inverse scattering, initial-boundary value problem.}

\setcounter{tocdepth}{1}
\tableofcontents

\section{Introduction}
The standard method for solving the initial-value problem for a {\it linear} partial differential equation (PDE) is to Fourier transform in space. Since the Fourier transform diagonalizes the differential operator, the time evolution in Fourier space is particularly simple and reduces to solving a linear ordinary differential equation for each frequency component.

In the context of {\it nonlinear} PDEs, the Fourier transform does not provide the same kind of simplification of the time evolution. Nonlinear PDEs can therefore not, in general, be analyzed in such a simple manner. However, there exists a class of nonlinear PDEs (called {\it integrable} PDEs) for which an analog of the above procedure exists. The first equation discovered to be integrable was the Korteweg--de Vries (KdV) equation \cite{GGKM1967}, soon followed by the nonlinear Schr\"odinger \cite{ZS1972} and sine-Gordon equations \cite{AKNS1973, K1975, ZTF1974}.

The class of integrable PDEs is set apart by the fact that it is possible to implement a `nonlinear version' of the Fourier transform which reduces the time evolution to a set of linear ordinary differential equations.
This nonlinear Fourier transform has to be tailor-made for each specific equation and takes, in general, a different form for each equation. For the initial value problem on the line, the transform is usually referred to as the Inverse Scattering Transform (IST) due to the fact that its implementation for the KdV equation involves a scattering problem for the Schr\"odinger equation in quantum mechanics. 

The IST formalism provides a powerful framework for analyzing integrable nonlinear PDEs. For example, it is straightforward via IST techniques to construct large families of exact solutions, such as multi-soliton solutions. Soliton-generating techniques, being largely algebraic in nature, usually require a limited amount of analytical groundwork. On the other hand, the implementation of the IST relevant for the solution of the general initial value problem, or for the study of asymptotics, relies on detailed analytical estimates. In particular, estimates on the associated nonlinear Fourier transform and its inverse are required. To establish these estimates is a challenging and technical enterprise even for relatively simple cases. In the case of the KdV equation on the line, whose nonlinear Fourier transform is determined by the one-dimensional Schr\"odinger operator $-\partial_x^2 + q(x)$, Deift and Trubowitz put the IST analysis on a rigorous footing in the elegant but long paper \cite{DT1979}.

An important development in recent years has been the extension of the IST formalism to initial-boundary value (IBV) problems (see \cite{F2002, Fbook} and references therein).
Although it is possible to treat more complicated domains, we here restrict attention to the case of IBV problems posed in the quarter-plane domain $\{x \geq 0, t \geq 0\}$, i.e., problems that involve a single boundary located at $x = 0$. Employing the machinery of \cite{F2002}, the solution of an integrable PDEs with a $2 \times 2$-matrix Lax pair in such a domain, can be expressed in terms of the solution of a matrix Riemann-Hilbert (RH) problem whose formulation involves four spectral functions $a(k)$, $b(k)$, $A(k)$, and $B(k)$. The functions $\{a(k), b(k)\}$ are defined via a nonlinear Fourier transform of the initial data on the half-line $\{x \geq 0, t = 0\}$, whereas $\{A(k), B(k)\}$ are defined via a nonlinear Fourier transform of the boundary values on $\{x = 0, t \geq 0\}$. 

The purpose of the present paper is to study the nonlinear Fourier transforms required for the analysis of the sine-Gordon equation in laboratory coordinates,
\begin{align}\label{sg}
u_{tt}- u_{xx} + \sin u=0,
\end{align}
in the quarter plane. The sine-Gordon equation has numerous applications. It is the Gauss-Codazzi equation for surfaces of constant negative curvature embedded in three-dimensional Euclidean space, it was the first equation for which B\"acklund transformations were discovered, it is the continuous limit of the Frenkel-Kontorova model in condensed matter physics, it models the magnetic flux propagation in Josephson junctions, and it can be used to describe several phenomena in nonlinear optics such as self-induced transparency, see the review \cite{M2014} and references therein. For several of these applications, boundaries play an important role, motivating the study of IBV problems for (\ref{sg}) in addition to the pure initial-value problem. 

In order to consider the quarter-plane problem for equation (\ref{sg}), we let $u_0, u_1$ denote the initial data and let $g_0$ and $g_1$ denote the Dirichlet and Neumann boundary values, respectively:
\begin{align}\label{uugg}
\begin{cases} u_0(x) = u(x,0), \\ u_1(x) = u_t(x,0), \end{cases}  x \geq 0; \qquad\qquad
\begin{cases} g_0(t) = u(0,t), \\ g_1(t) = u_x(0,t), \end{cases}  t \geq 0.
\end{align} 
The spectral functions $a,b,A,B$ that enter the formulation of the RH problem are defined by 
$$X(0,k) = \begin{pmatrix}
\overline{a(\bar{k})} 	&	b(k)	\\
-\overline{b(\bar{k})}	&	a(k)
\end{pmatrix}, \qquad
T(0,k) = \begin{pmatrix}
\overline{A(\bar{k})} 	&	B(k)	\\
-\overline{B(\bar{k})}	&	A(k)
\end{pmatrix},$$
where $X(x,k)$  and $T(t,k)$ are the eigenfunction solutions of the $x$- and $t$-parts of the associated Lax pair, normalized to approach the identity matrix $I$ as $x \to \infty$ and $t \to \infty$, respectively \cite{F2002}. 
Of particular importance are the combinations $c := bA -aB$ and $d := a\bar{A} + b\bar{B}$, which enter the product matrix $T(0,k)^{-1}S(0,k)$.
The RH formalism also involves two eigenfunctions $Y(x,k)$ and $U(t,k)$, which are similar to $X$ and $T$, but normalized at the origin.

It is crucial for the implementation of the RH approach to have a good understanding of the above eigenfunctions and spectral functions. 
%$X,Y,T,U$ and the spectral functions $a,b,A,B,c,d$. 
In this paper, we provide an extensive study of these functions under weak regularity and decay assumptions. The main results of the paper can be summarized as follows:

\begin{itemize}

\item Theorem \ref{xth1} establishes analyticity properties and estimates for the eigenfunctions $X$ and $Y$ and their derivatives.

\item Theorem \ref{xth2} and Corollary \ref{xcor} establish the asymptotics of $X$ and $Y$ as $k \to \infty$ and $k \to 0$, respectively.

\item Theorem \ref{tth1}, Theorem \ref{tth2}, and Corollary \ref{tcor} establish analogous results for the eigenfunctions $T$ and $U$.

\item Theorem \ref{abth}, Theorem \ref{ABth}, and Theorem \ref{cdth} establish analyticity properties and asymptotics as $k \to \infty$ and $k \to 0$ of the spectral functions $\{a(k), b(k)\}$, $\{A(k), B(k)\}$, and $\{c(k), d(k)\}$, respectively. 

\end{itemize}

The study of equation (\ref{sg}) in the quarter plane was initiated by Fokas and Its \cite{FI1992} who showed that the solution can be related to the solution of a RH problem. 
%They also used this connection to demonstrate that the boundary conditions can generate solitons. 
Analogs of the above eigenfunctions and spectral functions appear already in this reference, which also contains a brief discussion of their asymptotics (mostly to leading order); see \cite{F2002, Fbook} for some further discussion. The main contributions of the present paper are: 
\begin{enumerate}[\hspace{-.1cm}$(a)$]
\item We establish asymptotic expansions of the eigenfunctions, the spectral functions, and their derivatives to all orders\footnote{More precisely, to all orders permitted by the regularity; roughly speaking, the asymptotic expansions of the eigenfunctions and spectral functions exist to order $N$ if the potential is $N+1$ times weakly differentiable.} as $k \to \infty$. The proofs are inspired by the approach of \cite{LNonlinearFourier}, where the nonlinear Fourier transforms associated with the mKdV equation on the half-line were analyzed. However, by improving the arguments of \cite{LNonlinearFourier}, we are able to obtain an $L^1$ theory which allows for weaker solutions, cf. (\ref{xth1assump}) and (\ref{ujassump}).

\item We establish analogous expansions as $k \to 0$. The point $k = 0$ is special for the sine-Gordon equation because the Lax pair is singular at this point. 
%The asymptotics as $k \to 0$ can be analyzed by performing a gauge transformation. 
%However, the analysis of $Y$ and $U$ is more intricate for two reasons. First, the asymptotics of $Y$ and $U$ involves terms weighted by exponential factors. Second, the gauge matrix $G(x,t)$ approaches the identity as $x \to \infty$, but is typically not diagonal at $(x,t) = (0,0)$; this matrix will therefore, in general, introduce a mixing of the columns of $Y$ and $U$. 
%This adds an extra layer of complexity to the asymptotics of these columns. 
Corollaries \ref{xcor} and \ref{tcor} provide the asymptotics as $k \to 0$ of the eigenfunctions $X,Y,U$, and $T$ to all orders. If the higher-order terms are ignored, the expansions reduce to the formulas for the leading-order behavior of $X$ and $U$ obtained in Section 16.A of \cite{Fbook}.

\item We show that that if the functions $u_0,u_1,g_0,g_1$ are compatible at the origin to a given order, then $c(k)$ vanishes to that same order at $k = \infty$  and at $k = 0$ (see Theorem \ref{cdth}). If $u_0,u_1,g_0,g_1$ are the initial and boundary values of some sufficiently well-behaved solution in the quarter plane with decay as $x\to \infty$, then $c(k)$ vanishes identically  in the domain $D_1 = \{\im k > 0\} \cap \{|k| > 1\}$; this fact is referred to as the {\it global relation} and is one of the fundamental observations underlying the RH approach to IBV problems, see \cite{Fbook}. 
Theorem \ref{cdth} clarifies the role of the global relation by showing that $c(k)$ vanishes to all orders at $k = \infty$ for an {\it arbitrary} choice of the functions $u_0,u_1,g_0,g_1$ as long as they are compatible with (\ref{sg}) at the origin.\footnote{Note that the function $c(k)$ can be nonzero even if its asymptotic expansion as $k \to \infty$ vanishes to all orders, because the point $k = \infty$ lies on the boundary of its analyticity domain.}
Thus, identical vanishing of $c(k)$ in $D_1$ is tied to the functions $u_0,u_1,g_0,g_1$ providing consistent initial and boundary values for a quarter-plane solution, whereas vanishing of the asymptotic expansion of $c(k)$ as $k \to \infty$ is tied to compatibility at the origin. To the best of our knowledge, this is the first time this connection has been noticed for an IBV problem for an integrable equation (see Appendix B of \cite{FLnonlinearizable} for some different but related observations).

\end{enumerate}

A salient feature of the sine-Gordon equation is that it supports solutions with nonzero winding number (also known as topological charge). This is related to the fact that equation (\ref{sg}) is invariant under shifts of the form $u \to u + 2\pi j$, $j \in \Z$, so that $u(x,t)$ is naturally viewed as a map $u:[0,\infty) \times [0,\infty) \to S^1$, where $S^1 = \R / 2\pi \Z$ denotes the unit circle. For the problem on the line, it is customary to impose the boundary conditions $\lim_{|x| \to \infty} u(x,t) = 0$ (mod $2\pi$).
These conditions allow for solutions $u(x,t)$ which approach different multiples of $2\pi$ as $x$ approaches plus and minus infinity. The integer
$$\frac{1}{2\pi}\Big(\lim_{x \to \infty} u(x,t) - \lim_{x \to -\infty} u(x,t)\Big)$$
is referred to as the topological charge of the solution. Analogously, for the half-line problem, we impose the conditions 
$$\lim_{x \to \infty} u_0(x) = 2\pi N_x,  \qquad \lim_{t \to \infty} g_0(t) = 2\pi N_t, \qquad N_x, N_t \in \Z,$$
and refer to the integer $N_x - N_t \in \Z$ as the topological charge of the solution. 

Let us finally point out that a major motivation for writing this paper was to open up for the derivation of asymptotic formulas for (\ref{sg}) on the half-line. In fact, by employing the results derived in this paper, we have obtained a rather complete picture of the long-time asymptotics of the quarter-plane solutions of (\ref{sg}), establishing formulas for the leading-order solitonic contributions, the subleading radiative corrections, as well as the topological charge of the solution \cite{HLasymptotics}. In \cite{HLasymptotics}, we also utilize the results proved here to construct new quarter-plane solutions of the sine-Gordon equation, both in terms of prescribed spectral data, and in terms of given initial and boundary values satisfying the global relation. 
For the above and other applications, the behavior of the eigenfunctions and spectral functions near the points $k = \infty$ and $k = 0$ is of fundamental importance. For example, the behavior of $c(k)$ at the origin determines the asymptotics of the solution $u(x,t)$  in the transition region between the superluminal ($|x/t| > 1$) and subluminal ($|x/t| < 1$) sectors. Theorem \ref{cdth} grew out of a desire to understand the asymptotics in this sector.

\subsection{Notation} Throughout the paper, we use $\C_\pm = \{k \in \C \, | \, \im k \gtrless 0\}$ to denote the open upper and lower half-planes. The closed half-planes are denoted by $\bar{\C}_+ = \{k \in \C \, | \, \im k \geq 0\}$ and $\bar{\C}_- = \{k \in \C \, | \, \im k \leq 0\}$.
We let $\{D_j\}_1^4$ denote the open domains of the complex $k$-plane defined by (see Figure \ref{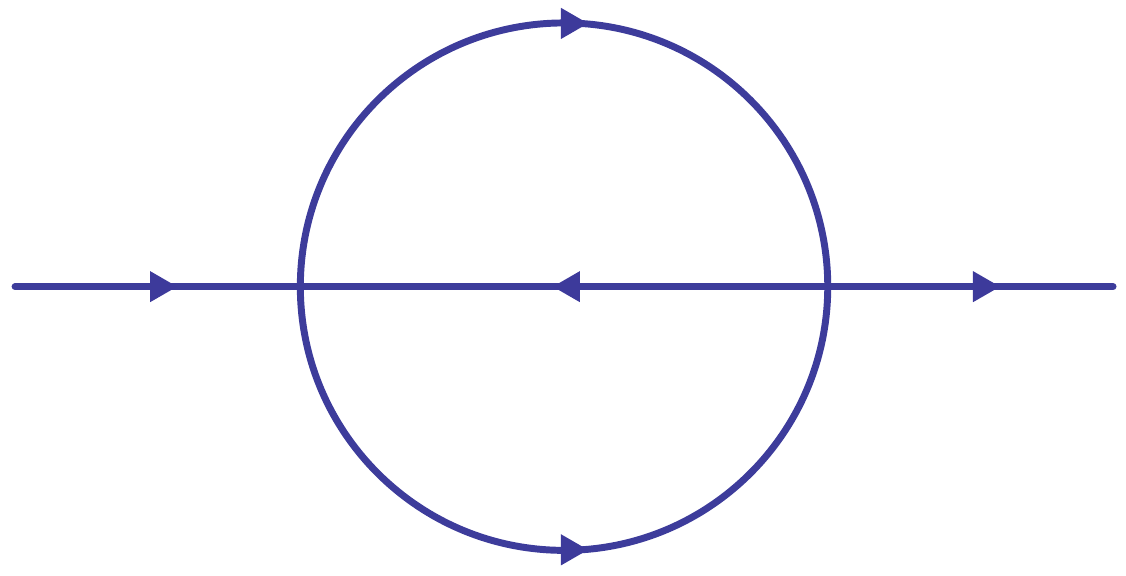})
\begin{align}\nonumber
D_1 = \{\im k > 0\} \cap \{|k|>1\},  \qquad
D_2 = \{\im k > 0\} \cap \{|k|<1\},
	\\ \nonumber
D_3 = \{\im k < 0\} \cap \{|k|<1\},  \qquad
D_4 = \{\im k < 0\} \cap \{|k|>1\},
\end{align}
and let $\Gamma = \R \cup \{|k| = 1\}$ denote the contour separating the $D_j$, oriented so that $D_1 \cup D_3$ lies to the left as in Figure \ref{Gamma.pdf}. We use $C > 0$ to denote a generic constant which may change within a computation.

\begin{figure}
\begin{center}
\bigskip\bigskip\bigskip
\begin{overpic}[width=.55\textwidth]{Gamma.pdf}
      \put(74.5,21.5){\small $1$}
      \put(20,21.5){\small $-1$}
      \put(48,57){\small $D_1$}
      \put(48,35){\small $D_2$}
      \put(48,13){\small $D_3$}
      \put(48,-7){\small $D_4$}
      \put(102,24.5){\small $\Gamma$}
      \end{overpic}\bigskip\bigskip\bigskip
     \begin{figuretext}\label{Gamma.pdf}
       The contour $\Gamma$ and the domains $\{D_j\}_1^4$ in the complex $k$-plane.
     \end{figuretext}
     \end{center}
\end{figure}

\section{Background}\label{backgroundsec}

\subsection{Lax pair}
Let $\{\sigma_j\}_1^3$ denote the three Pauli matrices defined by
\begin{align*}
\sigma_1=\begin{pmatrix}
           0 & 1 \\
           1 & 0
         \end{pmatrix},\quad
 \sigma_2=\begin{pmatrix}
      0 & -i \\
       i & 0
\end{pmatrix},\quad 
\sigma_3=\begin{pmatrix}
          1 & 0 \\
          0 & -1
 \end{pmatrix}.
\end{align*}
Equation (\ref{sg}) is the compatibility condition of the Lax pair
\begin{align}\label{lax}
\begin{cases}
\mu_x +\frac{i}{4} (k-\frac{1}{k})[\sigma_3, \mu] = Q(x,t,k) \mu,
	\\
\mu_t + \frac{i}{4} (k+\frac{1}{k} )[\sigma_3, \mu] = Q(x,t,-k)\mu,
\end{cases} 
\end{align}
%\mu_{here} = e^{-\frac{i\pi}{4} \hat{\sigma}_3} \mu_{Fbook}
where $k \in \C$ is the spectral parameter, $\mu(x,t,k)$ is a $2\times 2$-matrix valued eigenfunction, and $Q$ is defined by
\begin{align}\label{Qdef}
  Q(x,t,k) = Q_0(x,t) + \frac{Q_1(x,t)}{k}
  \end{align}
with
$$  Q_0(x,t) = -\frac{i(u_x+u_t)}{4}\sigma_2, \qquad
 Q_1(x,t) = \frac{i\sin u}{4}\sigma_1 + \frac{i(\cos u-1)}{4}\sigma_3.$$

 Since $Q$ has a pole at $k = 0$, the Lax pair (\ref{lax}) is not well-adapted for determining the behavior of $\mu$ as $k \to 0$. 
We therefore introduce a second Lax pair as follows. Define $G(x,t)$ by
\begin{align}\label{Gxtdef}
G(x,t)= \begin{pmatrix}
  \cos \frac{u}{2} & -\sin \frac{u}{2} \\
  \sin \frac{u}{2}  & \cos \frac{u}{2}
 \end{pmatrix}.
\end{align}
If $\mu$ satisfies (\ref{lax}), then the function $\hat{\mu}$ defined by
$$\mu(x,t,k) = G(x,t)\hat{\mu}(x,t,k)$$
satisfies
\begin{align*}
\begin{cases}
\hat{\mu}_x +\frac{i}{4} (k-\frac{1}{k})[\sigma_3, \hat{\mu}] = (\hat{Q}_0 + k\hat{Q}_1)\hat{\mu},
	\\
\hat{\mu}_t + \frac{i}{4} (k+\frac{1}{k} )[\sigma_3, \hat{\mu}] = (-\hat{Q}_0 + k\hat{Q}_1)\hat{\mu},
\end{cases} 
\end{align*}
where
$$\hat{Q}_0(x,t) = \frac{i(u_x-u_t)}{4}\sigma_2, \qquad 
\hat{Q}_1(x,t) = \frac{i\sin u}{4}\sigma_1 - \frac{i(\cos u-1)}{4}\sigma_3.$$
%QLax0 = G^{-1}(QG - G_x) + i\theta_1(\sigma_3 - G^{-1}\sigma_3G)

\subsection{Spectral functions}
We define two spectral functions $\{a(k), b(k)\}$ in terms of the initial data $\{u_0(x), u_1(x)\}$ and two spectral functions $\{A(k), B(k)\}$ in terms of the boundary values $\{g_0(t), g_1(t)\}$ as follows:
Define $\theta_1(k)$ and $\theta_2(k)$ by
\begin{align}\label{thetadef}
\theta_1(k) = \frac{1}{4}\bigg(k - \frac{1}{k}\bigg), \qquad  \theta_2(k) = \frac{1}{4}\bigg(k + \frac{1}{k}\bigg).
\end{align}
Let $\mathsf{U}(x,k)$ and $\mathsf{V}(t,k)$ be given by (cf. definition (\ref{Qdef}) of $Q(x,t,k)$)
\begin{align}\label{mathsfUdef}
& \mathsf{U}(x,k) = -\frac{i(u_{0x}+u_1)}{4}\sigma_2 + \frac{i\sin u_0}{4k}\sigma_1 + \frac{i(\cos u_0-1)}{4k}\sigma_3,
	\\ \label{mathsfVdef}
& \mathsf{V}(t,k)= -\frac{i(g_1+g_{0t})}{4}\sigma_2 - \frac{i\sin g_0}{4k}\sigma_1 - \frac{i(\cos g_0-1)}{4k}\sigma_3,
\end{align}
and define the $2 \times 2$-matrix valued functions $X(x,k)$ and $T(t,k)$ as the unique solutions of the linear Volterra integral equations
\begin{align}\label{Xvolterra}
& X(x,k) = I + \int_{\infty}^x e^{i\theta_1(x'-x)\hat{\sigma}_3} (\mathsf{U}X)(x',k) dx',
	\\ \label{Tvolterra}
&  T(t,k) = I + \int_{\infty}^t e^{i\theta_2(t'-t)\hat{\sigma}_3} (\mathsf{V}T)(t',k) dt',
\end{align}
where $\hat{\sigma}_3$ acts on a $2\times 2$ matrix $A$ by $\hat{\sigma}_3A = [\sigma_3, A]$, i.e., $e^{\hat{\sigma}_3} A = e^{\sigma_3} A e^{-\sigma_3}$.
The symmetries
$$X(x,k) =\sigma_2 \overline{X(x, \bar{k})}\sigma_2, \qquad  T(t,k)=\sigma_2 \overline{T(t, \bar{k})}\sigma_2,$$
imply that we can define $a,b,A,B$ by
\begin{align}\label{abABdef}
s(k) = \begin{pmatrix}
\overline{a(\bar{k})} 	&	b(k)	\\
-\overline{b(\bar{k})}	&	a(k)
\end{pmatrix}, \qquad
S(k) = \begin{pmatrix}
\overline{A(\bar{k})} 	&	B(k)	\\
-\overline{B(\bar{k})}	&	A(k)
\end{pmatrix},
\end{align}
where $s(k) := X(0,k)$ and $S(k) := T(0,k)$.
We think of the maps $\{u_0(x), u_1(x)\} \mapsto \{a(k), b(k)\}$ and $\{g_0(x), g_1(x)\} \mapsto \{A(k), B(k)\}$ as (half-line) nonlinear Fourier transforms. 

The importance of the above spectral functions lies in the fact that the solution of (\ref{sg}) in the quarter plane can be represented in terms of the solution of a $2 \times 2$-matrix RH problem, whose formulation involves $a,b,A,B$, see \cite{F2002}. Of particular interest for the RH formulation are the combinations $c(k)$ and $d(k)$ defined by 
\begin{subequations}\label{cddef}
\begin{align}
& c(k) = b(k)A(k) - a(k) B(k), \qquad k \in \bar{D}_1 \cup \R,
	\\
& d(k) = a(k)\overline{A(\bar{k})} +  b(k) \overline{B(\bar{k})}, \qquad k \in \bar{D}_2 \cup \R.
\end{align}
\end{subequations}

\section{Spectral analysis of the $x$-part}\label{xpartapp}
Consider the $x$-part of the Lax pair (\ref{lax}) evaluated at $t = 0$:
\begin{align}\label{xpartF}
  X_x + i\theta_1[\sigma_3, X] = \mathsf{U} X,
\end{align}
where $\theta_1 := \theta_1(k) = \frac{1}{4}(k - \frac{1}{k})$ and $\mathsf{U}(x,k)$ is given by (\ref{mathsfUdef}), i.e.,
\begin{align*}
  \mathsf{U}(x,k) = \mathsf{U}_0(x)+\frac{1}{k} \mathsf{U}_1(x),
\end{align*} 
with 
$$\mathsf{U}_0(x)=-\frac{i(u_{0x}(x) +u_1(x))}{4}\sigma_2,\qquad 
  \mathsf{U}_1(x) = \frac{i\sin u_0(x)}{4} \sigma_1 + \frac{i(\cos{u_0(x)} -1)}{4}\sigma_3.$$
We define two $2 \times 2$-matrix valued solutions $X(x,k)$ and $Y(x,k)$ of (\ref{xpartF}) as the solutions of the linear Volterra integral equations
\begin{subequations}\label{XYdef}
\begin{align}  \label{XYdefa}
 & X(x,k) = I + \int_{\infty}^x e^{i\theta_1(x'-x)\hat{\sigma}_3} (\mathsf{U}X)(x',k) dx',
  	\\ \label{XYdefb}
&  Y(x,k) = I + \int_{0}^x e^{i\theta_1(x'-x)\hat{\sigma}_3} (\mathsf{U}Y)(x',k) dx'.
\end{align}
\end{subequations}

If $A$ is an $n \times m$ matrix, we define $|A| \geq 0$ by
$|A|^2 = \sum_{i,j} |A_{ij}|^2$. Then $|A + B| \leq |A| + |B|$ and $|AB| \leq |A| |B|$. The notation $k \in (D_i, D_j)$ indicates that the first and second columns of the preceding equation hold for $k \in D_i$ and $k \in D_j$, respectively.

\begin{theorem}[Basic properties of $X$ and $Y$]\label{xth1}
Let $n \geq 1$ and $N_x \in \Z$ be integers. Suppose $u_0, u_1:[0,\infty)\to \R$ satisfy
\begin{align}\label{xth1assump}
  (1+x)^n(|u_0 - 2\pi N_x| + |u_{0x}|+|u_1|)\in  L^1([0,\infty)).
\end{align}
Then the equations (\ref{XYdef}) uniquely define two $2 \times 2$-matrix valued solutions $X$ and $Y$ of (\ref{xpartF}) with the following properties:
\begin{enumerate}[$(a)$]
\item The function $X(x, k)$ is defined for $x \geq 0$ and $k \in (\bar{\C}_-, \bar{\C}_+) \setminus \{0\}$. For each $k \in (\bar{\C}_-, \bar{\C}_+)\setminus \{0\}$, $X(\cdot, k)$ is weakly differentiable and satisfies (\ref{xpartF}).

\item The function $Y(x, k)$ is defined for $x \geq 0$ and $k \in \C\setminus \{0\}$. For each $k \in \C\setminus\{0\}$, $Y(\cdot, k)$ is weakly differentiable and satisfies (\ref{xpartF}).

\item For each $x \geq 0$, the function $X(x,\cdot)$ is continuous for $k \in (\bar{\C}_-, \bar{\C}_+)\setminus \{0\}$ and analytic for $k \in (\C_-, \C_+)$.

\item For each $x \geq 0$, the function $Y(x,\cdot)$ is analytic for $k \in \C \setminus \{0\}$.

\item For each $x \geq 0$ and each $j = 1, \dots, n$, the partial derivative $\frac{\partial^j X}{\partial k^j}(x, \cdot)$ has a continuous extension to $(\bar{\C}_-, \bar{\C}_+)\setminus \{0\}$.

\item $X$ and $Y$ satisfy the following estimates:
\begin{subequations}\label{Fest}
\begin{align}\label{Festa}
& \bigg|\frac{\partial^j}{\partial k^j}\big(X(x,k) - I\big) \bigg| \leq
\frac{C}{(1+x)^{n-j}}, \qquad k \in (\bar{D}_4, \bar{D}_1),
	\\\label{Festb}
& \bigg|\frac{\partial^j}{\partial k^j}\Big(Y(x,k) - I\Big) \bigg| \leq
C(1 + x)^j, \qquad k \in (\bar{D}_1, \bar{D}_4),
	\\\label{Festc}
& \bigg|\frac{\partial^j}{\partial k^j}\Big((Y(x,k) - I)e^{-2i\theta_1x\sigma_3}\Big) \bigg| \leq
C(1 + x)^j, \qquad k \in (\bar{D}_4, \bar{D}_1),
\end{align}
\end{subequations}
for $x \geq 0$ and $ j = 0, 1, \dots, n$.

\item $\det X(x,k) = 1$ for $x \geq 0$, $k \in \R\setminus \{0\}$, and $\det Y(x,k) = 1$ for $x \geq 0$, $k \in \C \setminus \{0\}$.

\item $X$ and $Y$ obey the following symmetries for each $x \geq 0$:
\begin{align*}
& X(x,k) = \sigma_2 X(x,-k) \sigma_2 = \sigma_2 \overline{X(x,\bar{k})} \sigma_2, \qquad k \in (\bar{\C}_-, \bar{\C}_+) \setminus \{0\},
	\\
&Y(x,k) = \sigma_2 Y(x,-k) \sigma_2 = \sigma_2 \overline{Y(x,\bar{k})} \sigma_2, \qquad k \in \C \setminus \{0\}.
\end{align*}
\end{enumerate}
\end{theorem}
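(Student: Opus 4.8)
The plan is to set up the standard Neumann (successive approximation) series for the Volterra equations (\ref{XYdef}) and to extract all eight claimed properties from uniform estimates on that series. I would treat $X$ and $Y$ in parallel, writing $X = \sum_{m\geq 0} X^{(m)}$ with $X^{(0)} = I$ and
\begin{align*}
X^{(m)}(x,k) = \int_\infty^x e^{i\theta_1(x'-x)\hat\sigma_3}(\mathsf{U}X^{(m-1)})(x',k)\,dx',
\end{align*}
and similarly for $Y$ with the lower limit $0$. The key preliminary observation is that the matrix $e^{i\theta_1(x'-x)\hat\sigma_3}$ acts diagonally entry-by-entry with factors $e^{\pm\frac{i}{2}(k-1/k)(x'-x)}$; writing $k = \xi + i\eta$ one checks that $|e^{\frac{i}{2}(k-1/k)(x'-x)}| = e^{-\frac{1}{2}(\eta + \eta/|k|^2)(x'-x)}$, which is bounded by $1$ exactly when the sign of $\eta(x'-x)$ is right — this is what forces the column splitting $k \in (\bar{\C}_-,\bar{\C}_+)$ for the first versus second column of $X$ (where $x' \geq x$ in the integral $\int_\infty^x$), and makes the bounded factor in $(\bar D_4, \bar D_1)$ where the $1/k$ contribution has the same sign as the $k$ contribution. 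For $Y$, the integral $\int_0^x$ has $x' \leq x$, so no sign condition is needed and $Y$ exists for all $k \in \C\setminus\{0\}$, but the exponential is no longer bounded — hence the $e^{-2i\theta_1 x\sigma_3}$ conjugation in (\ref{Festc}) and the loss of decay in (\ref{Festb}).

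With this in hand, the estimates (\ref{Fest}) for $j=0$ come from a routine induction: using $|\mathsf{U}(x',k)| \leq C(|u_{0x}|+|u_1|+|\sin u_0|+|\cos u_0 -1|)(x')$ together with (\ref{xth1assump}) (note $|\sin u_0|, |\cos u_0 - 1| \leq C|u_0 - 2\pi N_x|$), one gets $|X^{(m)}(x,k) - \delta_{m0}I| \leq \frac{C}{(1+x)^n}\frac{\big(\int_x^\infty (1+x')^n|\mathsf{U}|\,dx'\big)^{m-1}}{(m-1)!}$ or the analogous bound, whose sum over $m$ gives a convergent series dominated by $\frac{C}{(1+x)^n}\exp(\|\cdot\|_{L^1})$; the extra weight $(1+x')^n/(1+x)^n \geq 1$ in the $X$-integral is legitimate since $x' \geq x$. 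Uniform convergence on compact subsets of the relevant $k$-domains gives existence, uniqueness (Volterra), continuity in $(a)$–$(d)$, and analyticity where the exponential is analytic in $k$ (i.e., away from $k=0$), by Morera/Weierstrass. Weak differentiability in $x$ and the fact that $X,Y$ solve (\ref{xpartF}) follow by differentiating the integral equation under the $L^1$ hypothesis. For the $k$-derivatives in $(e)$ and the $j\geq 1$ cases of (\ref{Fest}), I would differentiate the series term by term; each $\partial_k$ either hits a factor $e^{i\theta_1(x'-x)\hat\sigma_3}$, producing a polynomial-in-$(x'-x)$ prefactor of degree one (this is where a factor $(1+x)$ — resp. a lost power of $(1+x')^{-1}$ absorbed into the weight — enters, matching the $(1+x)^j$ in (\ref{Festb})–(\ref{Festc}) and the $(1+x)^{-(n-j)}$ in (\ref{Festa})), or hits $\mathsf{U}$, producing $\partial_k\mathsf{U} = -\mathsf{U}_1/k^2$ which is again controlled on $k$-domains bounded away from $0$; the combinatorics of distributing $j$ derivatives over $m$ factors is handled by a Leibniz-type bound that still sums.

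Properties $(g)$ and $(h)$ are essentially algebraic. For $(h)$, the symmetries $\sigma_2 \mathsf{U}(x,k)\sigma_2 = \mathsf{U}(x,-k)$ and $\overline{\mathsf{U}(x,\bar k)} = \mathsf{U}(x,k)$ (immediate from (\ref{mathsfUdef}) since $\sigma_2$ anticommutes with $\sigma_1,\sigma_3$ and commutes with itself, and $u_0,u_1$ are real) together with the corresponding symmetries of $e^{i\theta_1(x'-x)\hat\sigma_3}$ and of $I$ show that $\sigma_2 X(x,-k)\sigma_2$ and $\sigma_2\overline{X(x,\bar k)}\sigma_2$ both solve the same Volterra equation as $X$, so uniqueness forces equality; same for $Y$. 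For $(g)$, since $\tr\mathsf{U} = 0$, Abel's formula (or: $\partial_x\det X = \tr(\mathsf{U})\det X = 0$ in the weak sense) shows $\det X$ is $x$-independent, and the value $1$ is read off from the normalization — at $x=\infty$ for $X$ (using $X\to I$, valid for $k\in\R\setminus\{0\}$ where the estimate (\ref{Festa})-type bound gives the limit) and at $x=0$ for $Y$. The main obstacle I anticipate is purely bookkeeping rather than conceptual: getting the $k$-derivative estimates with the \emph{sharp} powers of $(1+x)$ claimed in (\ref{Fest}) while keeping the series summable requires carefully choosing, at each order $m$, how to split the available weight $(1+x')^n$ between the $(1+x')^{-1}$ factors needed to absorb derivatives of the exponential and the $(1+x')^{n-\text{(used)}}$ factor needed for decay of the tail integral — and simultaneously tracking that differentiating the exponential costs a power of $(x'-x)$, not $(1+x')$, so that on the domain $x' \geq x$ one must trade $(x'-x)^j \leq (1+x')^j/(1+x)^{\text{something}}$ appropriately. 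This is exactly the place where the improvement over \cite{LNonlinearFourier} to an $L^1$ (rather than weighted-$L^2$ or pointwise-decay) theory has to be made to work, and I would expect the bulk of the written proof to be devoted to making that induction precise.
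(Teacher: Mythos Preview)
Your proposal is correct and follows essentially the same approach as the paper: Neumann iteration of the Volterra equations, the exponential-sign analysis forcing the column splitting, the $1/l!$ bounds from the simplex integral, and the algebraic arguments for $(g)$ and $(h)$ via $\tr\mathsf{U}=0$ and the $\sigma_2$-symmetries of $\mathsf{U}$. The one organizational difference is in the $k$-derivative step: rather than differentiating the series term by term and distributing $j$ derivatives via Leibniz as you suggest, the paper differentiates the \emph{integral equation} once to obtain a new Volterra equation for $\Lambda=\partial_k\Psi$ with inhomogeneous term $\Lambda_0(x,k)=-\int_x^\infty\partial_k[E\mathsf U]\,\Psi\,dx'$, bounds $|\Lambda_0|\le C(1+x)^{-(n-1)}$ directly, and then reruns the \emph{same} iteration on this new equation---iterating this procedure $j$ times. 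This sidesteps exactly the combinatorial bookkeeping you flag as the main obstacle: instead of tracking how $j$ derivatives spread over $m$ factors, each differentiation costs one power of $(1+x)$ in the inhomogeneous term and the subsequent iteration is identical to the $j=0$ case.
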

\begin{proof}
If we can prove $(a)$-$(f)$, the unit determinant conditions in $(g)$ follow easily from (\ref{xpartF}). Indeed, since $\tr \mathsf{U} = 0$, we obtain $(\det X)_x = 0$ and $(\det Y)_x = 0$; evaluation at $x = \infty$ and $x = 0$, respectively, gives $(g)$. On the other hand, the symmetries in $(h)$ are a consequence of the analogous symmetries for $\mathsf{U}$ together with uniqueness.

Given a $2 \times 2$ matrix $A$, we write $[A]_1$ and $[A]_2$ for the first and second columns of $A$.
We will prove $(a)$-$(f)$ for $[X]_2$ and $[Y]_2$. The results for the first columns follow from similar arguments.% or can be deduced from the symmetries in $(h)$.

{\bf Construction of $[X]_2$.}
We first use (\ref{XYdefa}) to construct $[X]_2$. 
The assumption (\ref{xth1assump}) implies that $(1+x)^n \mathsf{U}(x,k) \in L^1([0,\infty))$ for each $k \neq 0$. 
Letting $\Psi := [X]_2$, the second column of (\ref{XYdefa}) can be written as
\begin{align}\label{xPsiVolterra}
\Psi(x,k) = \begin{pmatrix} 0 \\ 1 \end{pmatrix} - \int_x^\infty E(x,x',k)\mathsf{U}(x',k)\Psi(x',k) dx', \qquad x \geq 0,
\end{align}
where
$$E(x,x',k) =  \begin{pmatrix} e^{\frac{i}{2} (k-\frac{1}{k})(x' - x)} & 0 \\ 0 & 1 \end{pmatrix}.$$
Let $K > 0$ be a small constant and let $\bar{\C}_\pm^K := \bar{\C}_\pm \cap \{|k| \geq K\}$.
We will show that the Volterra equation (\ref{xPsiVolterra}) has a unique solution $\Psi(x,k)$ for each $k \in \bar{\C}_+^K$.
Let $\Psi_0 = \big(\begin{smallmatrix} 0 \\ 1 \end{smallmatrix}\big)$ and define $\Psi_l$ inductively for $l \geq 1$ by
\begin{align*}
\Psi_{l+1}(x,k) = - \int_x^\infty E(x,x',k) \mathsf{U}(x',k) \Psi_l(x', k) dx', \qquad x \geq 0, \  k \in \bar{\C}_+^K.
\end{align*}
Then
\begin{align}\label{xPhiliterated}
\Psi_l(x,k) = (-1)^l \int_{x = x_{l+1} \leq x_l \leq \cdots \leq x_1 < \infty} \prod_{i = 1}^l E(x_{i+1}, x_i, k) \mathsf{U}(x_i,k) \Psi_0 dx_1 \cdots dx_l.
\end{align}
The estimates
\begin{align}\label{xE1bound}
\begin{cases}
|\partial_k^jE(x, x', k)| < C(1+|x' - x|)^j, & 0 \leq x \leq x' < \infty,
	\\
 \|\partial^j_k \mathsf{U}(\cdot,k)\|_{L^1([x,\infty))} < \frac{C}{(1+x)^{n}}, & 0 \leq x < \infty,
 \end{cases}  \  k \in \bar{\C}_+^K, \  j = 0, 1, \dots, n,
\end{align}
with $j = 0$ yield
\begin{align}\nonumber
|\Psi_l(x,k)| \leq \; & C^l \int_{x \leq x_l \leq \cdots \leq x_1 < \infty} \prod_{i = 1}^l  |\mathsf{U}(x_i,k)| |\Psi_0| dx_1 \cdots dx_l
	\\ \nonumber
\leq & \; \frac{C^l}{l!} \|\mathsf{U}(\cdot,k)\|_{L^1([x,\infty))}^l
	\\\label{xPhilestimate}
\leq &\; \frac{C^l}{l!} \bigg(\frac{C}{(1+x)^n}\bigg)^l, \qquad x \geq 0, \  k \in \bar{\C}_+^K, \ l=1,2, \dots.
\end{align}
Hence the series
$$\Psi(x,k) = \sum_{l=0}^\infty \Psi_l(x,k)$$
converges absolutely and uniformly for $x \geq 0$ and $k \in \bar{\C}_+^K$ to a bounded solution $\Psi(x,k)$ of (\ref{xPsiVolterra}).
Equation (\ref{xPsiVolterra}) implies that $\Psi(\cdot, k) \in W^{1,1}_{\text{loc}}([0,\infty))$ for each $k \in \bar{\C}_+^K$, showing that $\Psi(\cdot, k)$ is a weakly differentiable solution of (\ref{xpartF}). 
Using (\ref{xE1bound}) to differentiate under the integral sign in (\ref{xPhiliterated}), we see that $\Psi_l(x, \cdot)$ is analytic in the interior of $\bar{\C}_+^K$ for each $l$; the uniform convergence then proves that $\Psi$  is analytic in the interior of $\bar{\C}_+^K$. Since $K > 0$ is arbitrary, this proves $(a)$ and $(c)$.
On the other hand, by (\ref{xPhilestimate}),
\begin{align}\label{xPsiminusf}
|\Psi(x,k) - \Psi_0| \leq \sum_{l=1}^\infty | \Psi_l(x,k)| \leq \frac{C}{(1+x)^n}, \qquad x \geq 0, \  k \in \bar{\C}_+^K,
\end{align}
which proves the second column of (\ref{Festa}) for $j = 0$ and $k \in \bar{\C}_+^K$. 

We next show that $[X]_2 = \Psi$ satisfies $(e)$ and $(f)$ for $j = 1, \dots, n$ and $k \in \bar{\C}_+^K$.
Let
\begin{align}\label{xLambda0}
\Lambda_0(x,k) =- \int_x^\infty \partial_k [E(x,x',k) \mathsf{U}(x',k)] \Psi(x', k) dx'.
\end{align}
Differentiating the integral equation (\ref{xPsiVolterra}) with respect to $k$, we find that $\Lambda := \partial_k\Psi$ satisfies
\begin{align}\label{xintegraleq3}
\Lambda(x,k) = \Lambda_0(x,k) - \int_x^\infty E(x,x',k) \mathsf{U}(x',k) \Lambda(x', k) dx'
\end{align}
for each $k$ in the interior of $\bar{\C}_+^K$; the differentiation can be justified by dominated convergence using (\ref{xE1bound}) and a Cauchy estimate for $\partial_k\Psi$.
We have $\Lambda = \sum_{l=0}^\infty \Lambda_l$, where the functions $\Lambda_l$ are defined by replacing $\{\Psi_l\}$ by $\{\Lambda_l\}$ in (\ref{xPhiliterated}). Indeed, proceeding as in (\ref{xPhilestimate}), we find
\begin{align}\label{EE0}
|\Lambda_l(x,k)| \leq \frac{C^l}{l!}\|\Lambda_0(\cdot, k)\|_{L^\infty([x,\infty))}\bigg(\frac{C}{(1+x)^n}\bigg)^l, \qquad x \geq 0, \  k \in \bar{\C}_+^K,\  l=1,2,\dots.
\end{align}
Using (\ref{xE1bound}) and (\ref{xPsiminusf}) in (\ref{xLambda0}), we obtain
\begin{align}\label{EE1}
|\Lambda_0(x,k)| \leq \frac{C}{(1+x)^{n-1}}, \qquad x \geq 0, \  k \in \bar{\C}_+^K.
\end{align}
In particular $\|\Lambda_0(\cdot, k)\|_{L^\infty([x,\infty))}$ is bounded for $x \geq 0$ and $k \in \bar{\C}_+^K$.
Thus, $\sum_{l=0}^\infty \Lambda_l$ converges uniformly on $[0,\infty) \times \bar{\C}_+^K$ to a bounded solution of (\ref{xintegraleq3}). By uniqueness, this solution equals $\Lambda = \partial_k\Psi$ for $k$ in the interior of $\bar{\C}_+^K$. The following analog of (\ref{xPsiminusf}) follows:
\begin{align}\label{EE2}
|\Lambda(x,k) - \Lambda_0(x,k)| \leq
\frac{C}{(1+x)^{n}}, \qquad x \geq 0,  \  k \in \bar{\C}_+^K.
\end{align}
In view of equations (\ref{EE1}) and (\ref{EE2}), we conclude that $[X]_2 = \Psi$ satisfies $(e)$ and $(f)$ for $j = 1$ and $k \in \bar{\C}_+^K$.

Proceeding inductively, we find that $\Lambda^{(j)} := \partial_k^j\Psi$ satisfies an integral equation of the form
\begin{align*}
\Lambda^{(j)}(x,k) = \Lambda_{0,j}(x,k)
- \int_x^\infty E(x,x',k) \mathsf{U}(x',k) \Lambda^{(j)}(x', k) dx',
\end{align*}
where
$$\big|\Lambda_{0,j}(x, k)\big| \leq
\frac{C}{(1+x)^{n-j}}, \qquad x \geq 0, \  k \in \bar{\C}_+^K.$$
If $1 \leq j \leq n$, then $\|\Lambda_{0,j}(\cdot, k)\|_{L^\infty([x,\infty))}$ is bounded for $x \geq 0$ and $k \in \bar{\C}_+^K$; hence the associated series $ \Lambda^{(j)} = \sum_{l=0}^\infty \Lambda^{(j)}_l$ converges uniformly on $[0,\infty) \times \bar{\C}_+^K$ to a bounded solution which satisfies $(e)$ and $(f)$ for $k \in \bar{\C}_+^K$.
Since $K > 0$ was arbitrary, this completes the proof of the theorem for $[X]_2$.

{\bf Construction of $[Y]_2$.}
We next use similar arguments applied to (\ref{XYdefb}) to construct $[Y]_2$. In this case, we define $\{\Psi_l\}_0^\infty$ by $\Psi_0 = \big(\begin{smallmatrix} 0 \\ 1 \end{smallmatrix}\big)$ and
\begin{align*}
\Psi_l(x,k) = & \int_{0 \leq x_1 \leq  \cdots \leq x_l \leq x_{l+1}=x < \infty} \prod_{i = 1}^l E(x_{i+1}, x_i, k) \mathsf{U}(x_i,k) \Psi_0 dx_1 \cdots dx_l, \qquad x \geq 0.
\end{align*}
As in (\ref{xPhilestimate}), we find for $x \geq 0$, $K^{-1} < |k| < K$, and $l=1,2,\dots$,
\begin{align}\nonumber
|\Psi_l(x,k)| \leq \frac{(Ce^{Cx})^l}{l!} \|\mathsf{U}(\cdot,k)\|_{L^1([0,x])}^l
\leq \frac{(Ce^{Cx})^l}{l!},
\end{align}
which leads to the following analog of (\ref{xPsiminusf}) valid for $(x,k)$ in a compact subset of  $[0,\infty) \times (\C \setminus \{0\})$:
\begin{align}\label{YPsiPsi0}
|\Psi(x,k) - \Psi_0| \leq \sum_{l=1}^\infty | \Psi_l(x,k)| \leq C.
\end{align}
This leads to properties $(b)$ and $(d)$.
Since $|E(x, x', k)| < C$ for $0 \leq x' \leq x < \infty$ and $k \in \bar{\C}_-^K$, we see that (\ref{YPsiPsi0}) holds uniformly also for $(x, k) \in [0,\infty) \times \bar{\C}_-^K$, which yields (\ref{Festb}) for $j = 0$. Letting
\begin{align*}
\Lambda_0(x,k) = \;& \int_0^x \partial_k\big[E(x,x',k) \mathsf{U}(x',k)\big] \Psi(x', k) dx',
\end{align*}
we find that $\Lambda := \partial_k\Psi$ satisfies
\begin{align*}
\Lambda(x,k) = \Lambda_0(x,k) + \int_0^x E(x,x',k) \mathsf{U}(x',k) \Lambda(x', k) dx'
\end{align*}
The proof of (\ref{Festb}) for $j = 1$ follows from the following analogs of (\ref{EE0})-(\ref{EE2}):
\begin{align*}
& |\Lambda_l(x,k)| \leq \frac{C^l}{l!}\|\Lambda_0(\cdot, k)\|_{L^\infty([0,x])}, \qquad
	\\
& |\Lambda_0(x,k)| \leq C \int_0^x (1 + |x-x'|) |u_0(x') +u_{0x}(x')+u_1(x')| dx'
	\\
&\qquad \qquad  \leq C(1+x)\int_0^x |u_0(x')+u_{0x}(x')+u_1(x')| dx'
 \leq C(1+x),
	\\ \nonumber
& |\Lambda(x,k) - \Lambda_0(x,k)| \leq C(1+x),
\end{align*}
which are valid for $x \geq 0$ and $k \in \bar{\C}_-^K$; the proof of (\ref{Festb}) for $j \geq 2$ is similar.

Finally, the estimate (\ref{Festc}) follows by applying similar arguments to the integral equation
$$y(x,k) = e^{-2i\theta_1 x \sigma_3} + \int_0^x e^{i\theta_1 (x'-x) \sigma_3}(\mathsf{U} y)(x',k) e^{i\theta_1 (x'-x) \sigma_3}dx'$$
satisfied by $y(x,k) := Y(x,k) e^{-2i\theta_1 x\sigma_3}$. 
%$y_x + i\theta_1 \sigma_3y + i\theta_1y\sigma_3 = \mathsf{U} y$.
%$$(e^{i\theta_1 x \sigma_3} y e^{i\theta_1 x \sigma_3})_x = e^{i\theta_1 x \sigma_3}\mathsf{U} ye^{i\theta_1 x \sigma_3}$$
%$$e^{i\theta_1 x \sigma_3} y e^{i\theta_1 x \sigma_3} = I + \int_0^x e^{i\theta_1 x' \sigma_3}(\mathsf{U} y)(x',k) e^{i\theta_1 x' \sigma_3}dx'$$
\end{proof}

\begin{remark}\upshape
A function $f$ is absolutely continuous on an interval $I \subset \R$ if and only if $f \in W^{1,1}_{\text{loc}}(I)$, and in that case the classical derivative of $f$ exists and equals the weak derivative at a.e. $x \in I$. Thus, in addition to being weak solutions of (\ref{xpartF}) in the distributional sense, $X$ and $Y$ also satisfy (\ref{xpartF}) in the classical sense for a.e. $x \geq 0$.
\end{remark}

\subsection{Asymptotics as $k \to \infty$}
We next consider the behavior of the eigenfunctions $X$ and $Y$ as $k \to \infty$. Our goal is to prove Theorem \ref{xth2} which essentially states that the asymptotics of $X$ and $Y$ as $k \to \infty$ can be obtained by considering formal power series solutions of (\ref{xpartF}). Let us first find these formal solutions.

Equation (\ref{xpartF}) admits formal power series solutions $X_{formal}$ and $Y_{formal}$, normalized at $x = \infty$ and $x = 0$ respectively, such that
\begin{align}\label{Xformaldef}
& X_{formal}(x,k) = I + \frac{X_1(x)}{k} + \frac{X_2(x)}{k^2} + \cdots,
	\\ \nonumber
& Y_{formal}(x,k) =  I + \frac{Z_1(x)}{k} + \frac{Z_2(x)}{k^2} + \cdots + \bigg(\frac{W_1(x)}{k} + \frac{W_2(x)}{k^2} + \cdots \bigg) e^{2i\theta_1 x\sigma_3},
\end{align}
where the coefficients $\{X_j(x), Z_j(x), W_j(x)\}_1^\infty$ satisfy
\begin{align}\label{Fjnormalization}
\lim_{x\to \infty} X_j(x) = 0, \qquad Z_j(0) + W_j(0) = 0, \qquad j \geq 1.
\end{align}
Indeed, substituting
$$X = I + \frac{X_1(x)}{k} + \frac{X_2(x)}{k^2} + \cdots$$
into (\ref{xpartF}), the off-diagonal terms of $O(k^{-j})$ and the diagonal terms of $O(k^{-j-1})$ yield the relations
\begin{align}\label{xrecursive}
\begin{cases}
X_{j+1}^{(o)} = -2 i \sigma_3\big(-\partial_x X_{j}^{(o)}+\frac{i}{2}\sigma_3 X_{j-1}^{(o)}+\mathsf{U}_0 X_j^{(d)}+\mathsf{U}_1^{(o)} X_{j-1}^{(d)}+\mathsf{U}_1^{(d)} X_{j-1}^{(o)}\big),
	\\
\partial_x X_{j+1}^{(d)} = \mathsf{U}_0 X_{j+1}^{(o)}+\mathsf{U}_1^{(o)} X_j^{(o)}+\mathsf{U}_1^{(d)} X_j^{(d)},
\end{cases}
\end{align}
where $A^{(d)}$ and $A^{(o)}$ denote the diagonal and off-diagonal parts of a $2 \times 2$ matrix $A$, respectively.

The coefficients $\{Z_j\}$ satisfy the equations obtained by replacing $\{X_j\}$ with $\{Z_j\}$ in \eqref{xrecursive}.
Similarly, substituting
$$X = \bigg(\frac{W_1(x)}{k} + \frac{W_2(x)}{k^2} + \cdots\bigg) e^{2i\theta_1 x \sigma_3}$$
into (\ref{xpartF}), the diagonal terms of $O(k^{-j})$ and the off-diagonal terms of $O(k^{-j-1})$ yield the relations
\begin{align}\label{xrecursive2}
\begin{cases}
W_{j+1}^{(d)} = -2 i \sigma_3\big(- \partial_xW_{j}^{(d)}+\frac{i}{2}\sigma_3 W_{j-1}^{(d)}+\mathsf{U}_0 W_j^{(o)}+\mathsf{U}_1^{(o)} W_{j-1}^{(o)}+\mathsf{U}_1^{(d)} W_{j-1}^{(d)}\big),
	\\
\partial_x W_{j+1}^{(o)} = \mathsf{U}_0 W_{j+1}^{(d)}+\mathsf{U}_1^{(o)} W_j^{(d)}+\mathsf{U}_1^{(d)} W_j^{(o)}.
\end{cases}
\end{align}
The coefficients $\{X_j(x), Z_j(x), W_j(x)\}$ are determined recursively from (\ref{Fjnormalization})-(\ref{xrecursive2}), the equations obtained by replacing $\{X_j\}$ with $\{Z_j\}$ in (\ref{xrecursive}), and the initial assignments
$$X_{-1} = 0, \qquad X_0 = I, \qquad Z_{-1} = 0, \qquad Z_0 = I, \qquad W_{-1}=W_0 = 0.$$
The first few coefficients are given by
\begin{align}\label{x1}
X_1(x) = &\; \frac {i(u_{0x}+u_1)}{2} \sigma_1+\sigma_3 \frac{i}{4} \int_{\infty}^{x}\bigg[-\frac{(u_{0x}+u_1)^2}{2}+\cos u_0-1 \bigg]dx',
	\\ \nonumber
X_2(x) = &\; i \bigg[-u_{0xx}-u_{1x}+\frac{i}{2}(u_{0x}+u_1)(X_1)_{22} + \frac{1}{2} \sin u_0\bigg]\sigma_2
	\\ \nonumber
 &\;+ I\int_{\infty}^x \bigg\{- \frac{i}{4}\Big[-\frac{(u_{0x} + u_1)^2}{2} + \cos{u_0} - 1\Big](X_1)_{22}
 	\\\nonumber
& - \frac{1}{4}(u_{0x} + u_1)(u_{0xx} + u_{1x})\bigg\} dx',
	\\ \label{z1}
Z_1(x) = &\; \frac {i(u_{0x}+u_1)}{2} \sigma_1+\sigma_3 \frac{i}{4} \int_{0}^{x}\bigg[-\frac{(u_{0x}+u_1)^2}{2}+\cos u_0-1 \bigg]dx',
	\\ \nonumber
Z_2(x) = &\; i \bigg[-u_{0xx}-u_{1x}+\frac{i}{2}(u_{0x}+u_1)(Z_1)_{22} + \frac{1}{2} \sin u_0\bigg]\sigma_2\\\nonumber
 &\;+ I\bigg\{\int_{0}^x \bigg[- \frac{i}{4}\Big[-\frac{(u_{0x} + u_1)^2}{2} + \cos{u_0} - 1\Big](Z_1)_{22}
 	\\\nonumber
& - \frac{1}{4}(u_{0x} + u_1)(u_{0xx} + u_{1x})\bigg] dx'-\frac{1}{4} (u_{0x}(0)+u_1(0))^2\bigg\},
\end{align}
and
\begin{align}\label{w1}
W_1(x) = & -\frac{i(u_{0x}(0)+u_1(0))}{2}\sigma_1,
	\\ \nonumber
W_2(x) = &\; \frac{i}{2}(u_{0x}+u_1) (W_1)_{12} I +\sigma_2 \bigg\{\int_0^x \frac{1}{4}\bigg[\frac{(u_{0x}+u_1)^2}{2} - \cos u_0 +1\bigg](W_1)_{12}  dx'\\ \nonumber &\; -i\Big[-(u_{0xx}(0)+u_{1x}(0))+\frac{1}{2} \sin u_0(0)\Big]\bigg\}.
\end{align}

We can now describe the behavior of $X$ and $Y$ for large $k$.

\begin{theorem}[Asymptotics of $X$ and $Y$ as $k \to \infty$]\label{xth2}
Let $m \geq 1$, $n \geq 1$, and $N_x \in \Z$ be integers.  
Let $u_0(x)$ and $u_1(x)$ be functions satisfying the following regularity and decay assumptions:\footnote{It is implicit in the assumption that $u_0(x)$ and $u_1(x)$  are $m+2$ and $m+1$ times weakly differentiable, respectively.} 
\begin{align}\label{ujassump}
\begin{cases}
(1+x)^{n} (u_0(x) - 2\pi N_x) \in L^1([0,\infty)),
	\\
(1+x)^{n}\partial^i u_0(x) \in L^1([0,\infty)), \qquad  i = 1, \dots, m+2,
	\\
(1+x)^{n}\partial^i u_1(x) \in L^1([0,\infty)), \qquad  i = 0,1, \dots, m+1,
\end{cases}
\end{align}

As $k \to \infty$, $X$ and $Y$ coincide to order $m$ with $X_{formal}$ and $Y_{formal}$, respectively, in the following sense: The functions
\begin{align}\nonumber
&X_p(x,k) := I + \frac{X_1(x)}{k} + \cdots + \frac{X_{m+1}(x)}{k^{m+1}},
	\\ \nonumber
&Y_p(x,k) = I + \frac{Z_1(x)}{k} + \cdots + \frac{Z_{m+1}(x)}{k^{m+1}}
 + \bigg(\frac{W_1(x)}{k} + \cdots + \frac{W_{m+1}(x)}{k^{m+1}}\bigg) e^{2i\theta_1 x\sigma_3},
\end{align}
are well-defined and, for each $j = 0, 1, \dots, n,$
\begin{subequations}\label{varphiasymptotics}
\begin{align}\label{varphiasymptoticsa}
& \bigg|\frac{\partial^j}{\partial k^j}\big(X - X_p\big) \bigg| \leq
\frac{C}{|k|^{m+1}(1+x)^{n-j}}, \qquad x \geq 0, \   k \in (\bar{D}_4, \bar{D}_1),
	\\ \label{varphiasymptoticsb}
& \bigg|\frac{\partial^j}{\partial k^j}\big(Y - Y_p\big) \bigg| \leq
\frac{C(1 + x)^{j+2}e^{\frac{Cx}{|k|^{m+1}}}}{|k|^{m+1}}, \qquad x \geq 0, \  k \in (\bar{D}_1, \bar{D}_4),
	\\ \label{varphiasymptoticsc}
& \bigg|\frac{\partial^j}{\partial k^j}\big((Y - Y_p)e^{-2i\theta_1 x\sigma_3}\big) \bigg| \leq
\frac{C(1 + x)^{j+2}e^{\frac{Cx}{|k|^{m+1}}}}{|k|^{m+1}}, \quad x \geq 0, \ k \in (\bar{D}_4, \bar{D}_1).
\end{align}
\end{subequations}

\end{theorem}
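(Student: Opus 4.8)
The plan is to introduce the ``reduced'' eigenfunctions obtained by peeling off the formal approximation and to show that they satisfy Volterra integral equations of exactly the type already analyzed in the proof of Theorem \ref{xth1}, but with an effective potential that is smaller by a factor $k^{-(m+1)}$. First I would set $X_r := X X_p^{-1}$ (for $x$ large enough that $X_p$ is invertible; since $\det X_p = 1 + O(1/k)$ this holds for $|k|$ large, uniformly in $x$, by the decay of the coefficients $X_j$ coming from \eqref{x1}--\eqref{z1} and assumption \eqref{ujassump}). A direct computation using \eqref{xpartF} and the recursive relations \eqref{xrecursive} shows that $X_r$ satisfies an equation of the same form, $X_{r,x} + i\theta_1[\sigma_3,X_r] = \mathsf{U}_r X_r$, with a new potential $\mathsf{U}_r = \mathsf{U}_r(x,k)$ whose entries are $O(k^{-(m+1)})$ and lie in $(1+x)^{n}L^1([0,\infty))$ because they are built from $\mathsf{U}_0,\mathsf{U}_1$, their first $m+1$ weak derivatives, and the coefficient functions $X_j$; this is where the extra regularity hypotheses on $u_0,u_1$ in \eqref{ujassump} (namely $m+2$, resp.\ $m+1$, weak derivatives in weighted $L^1$) get used up. The normalization $X_r(\cdot,k) \to I$ as $x\to\infty$ is inherited from the normalizations of $X$ and $X_p$.

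Next I would convert this to the Volterra equation
\begin{align*}
X_r(x,k) = I + \int_\infty^x e^{i\theta_1(x'-x)\hat\sigma_3}(\mathsf{U}_r X_r)(x',k)\,dx',
\end{align*}
and run the Neumann series argument of Theorem \ref{xth1} verbatim, but now tracking the explicit factor $\|\mathsf{U}_r(\cdot,k)\|_{L^1} \le C/|k|^{m+1}$ that appears in the analog of \eqref{xPhilestimate}. This immediately yields $|X_r - I| \le C/(|k|^{m+1}(1+x)^{n})$ on $(\bar D_4,\bar D_1)$, and then $|X - X_p| = |(X_r - I)X_p| \le C/(|k|^{m+1}(1+x)^{n})$ since $X_p$ is bounded. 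The $k$-derivative estimates \eqref{varphiasymptoticsa} for $j = 1,\dots,n$ come from differentiating the Volterra equation for $X_r$ exactly as in the passage from \eqref{xLambda0} to \eqref{EE2}: each differentiation either falls on $\mathsf{U}_r$ (producing another factor bounded in weighted $L^1$, losing one power of $(1+x)$ per derivative and keeping the $k^{-(m+1)}$ gain, using the pointwise bound $|\partial_k^j(e^{i\theta_1(x'-x)\hat\sigma_3})| \le C(1+|x'-x|)^j$ valid on $|k|\ge K$) or on the already-controlled lower-order terms, and the resulting inhomogeneous Volterra equation has a uniformly bounded inhomogeneity that again propagates through the Neumann series. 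Finally, expanding $\partial_k^j(X - X_p) = \partial_k^j((X_r-I)X_p)$ by Leibniz and using the bounds on $\partial_k^i(X_r - I)$ together with $|\partial_k^i X_p| \le C$ (from the explicit polynomial-in-$1/k$ form of $X_p$ and boundedness of the coefficient integrals) gives \eqref{varphiasymptoticsa}.

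For $Y$ the structure is the same but the bookkeeping is heavier because $Y_p$ contains the oscillatory term $\big(\tfrac{W_1}{k}+\cdots\big)e^{2i\theta_1 x\sigma_3}$; accordingly I would work with $y(x,k) = Y(x,k)e^{-2i\theta_1 x\sigma_3}$ and $y_p(x,k) = Y_p(x,k)e^{-2i\theta_1 x\sigma_3}$ and set $y_r := y\, y_p^{-1}$, which satisfies a conjugated Volterra equation on $[0,x]$ as in the last displayed equation of the proof of Theorem \ref{xth1}. The same Neumann-series analysis applies, but now $|E(x,x',k)| \le C$ only for $0\le x'\le x$ on $\bar\C_-^K$ while on $\bar\C_+^K$ one has instead the exponential growth bound; retaining it carefully produces the $e^{Cx/|k|^{m+1}}$ factor in \eqref{varphiasymptoticsb}--\eqref{varphiasymptoticsc}, since the effective potential carries a $|k|^{-(m+1)}$ and the Gr\"onwall-type estimate on $[0,x]$ gives $\exp(C\|\mathsf{U}_r\|_{L^1([0,x])} \cdot \text{(growth)}) = \exp(Cx/|k|^{m+1})$. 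The extra powers $(1+x)^{j+2}$ (rather than $(1+x)^j$) arise from the two factors of $(1+x)$ already present in the $j=0$, $m=0$ case of \eqref{Festb}--\eqref{Festc} plus one per $k$-derivative.

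\textbf{Main obstacle.} The routine parts are the Neumann-series convergence and the $k$-differentiation, which are copied with minor changes from Theorem \ref{xth1}. The genuinely delicate step is verifying that the conjugated potential $\mathsf{U}_r$ (resp.\ its analog for $y_r$) is both $O(k^{-(m+1)})$ \emph{and} lies in $(1+x)^{n}L^1$: this requires checking that the algebraic cancellations built into the recursion \eqref{xrecursive}--\eqref{xrecursive2} exactly kill all terms of order $k^0,\dots,k^{-m}$ in $X_{p,x} + i\theta_1[\sigma_3,X_p] - \mathsf{U}X_p$, leaving a remainder whose worst term involves $\partial_x X_{m+1}$ — and hence the $(m+2)$-nd derivative of $u_0$ and the $(m+1)$-st of $u_1$ — so that the hypotheses \eqref{ujassump} are exactly what is needed and no more. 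Managing the interplay between the $x$-weights (which degrade under $k$-differentiation) and the $k^{-(m+1)}$ gain in the oscillatory $Y$-case, so as to land precisely on the stated bounds with the $e^{Cx/|k|^{m+1}}$ factor, is the other point requiring care.
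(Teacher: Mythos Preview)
Your approach for $X$ is essentially the paper's, but with a slip: setting $X_r := X X_p^{-1}$ does \emph{not} produce an equation of the form $X_{r,x} + i\theta_1[\sigma_3,X_r] = \mathsf{U}_r X_r$. A direct computation gives $X_{r,x} = A X_r - X_r A_p$ with $A = -i\theta_1\sigma_3 + \mathsf{U}$, and the right-multiplication by $A_p$ cannot be absorbed into a left potential. The correct choice is left multiplication: with $X_p^{-1}X$ one gets $(X_p^{-1}X)_x + i\theta_1[\sigma_3, X_p^{-1}X] = (X_p^{-1}\Delta X_p)(X_p^{-1}X)$, where $\Delta = A - A_p$ and $A_p = (\partial_x X_p - i\theta_1 X_p\sigma_3)X_p^{-1}$. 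The paper works with exactly this relation (written as a Volterra equation for $X$ itself, equation \eqref{hatXhatXp}), and your identification of the key technical point---that $\Delta$ is $O(k^{-(m+1)})$ in weighted $L^1$, using up precisely the regularity in \eqref{ujassump}---matches the paper's Lemma~\ref{lemma2}. With the side of multiplication fixed, your argument for \eqref{varphiasymptoticsa} is the paper's.

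For $Y$ there is a genuine gap. You propose to invert $y_p = Y_p e^{-2i\theta_1 x\sigma_3} = Z_p e^{-2i\theta_1 x\sigma_3} + W_p$ and run the same scheme. But $y_p$ mixes a bounded piece $W_p = O(k^{-1})$ with the oscillatory piece $Z_p e^{-2i\theta_1 x\sigma_3}$, whose diagonal entries $e^{\mp 2i\theta_1 x}$ are unbounded in $x$ in the relevant $k$-region; consequently neither $y_p^{-1}$ nor the would-be effective potential $y_p^{-1}\cdot(\text{remainder})$ is uniformly controlled, and the Neumann/Gr\"onwall step breaks down. The paper avoids this by treating the two summands of $Y_p$ \emph{separately}: for \eqref{varphiasymptoticsb} it conjugates $Y$ by $Z_p$ alone, obtaining the Volterra equation \eqref{EEYVolterra} with inhomogeneous term $Z_p(x,k) e^{-i\theta_1 x\hat\sigma_3} Z_p^{-1}(0,k)$ (bounded on $(\bar D_1,\bar D_4)$), and for \eqref{varphiasymptoticsc} it conjugates $y = Ye^{-2i\theta_1 x\sigma_3}$ by $W_p$ alone, obtaining \eqref{yWhatVolterra}. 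A separate bridging lemma then shows that $Z_p e^{-i\theta_1 x\hat\sigma_3} Z_p^{-1}(0)$ and $W_p e^{i\theta_1 x\hat\sigma_3} W_p^{-1}(0)$ agree with $Y_p$ and $Y_p e^{-2i\theta_1 x\sigma_3}$, respectively, to order $k^{-(m+2)}$. This split---and the accompanying comparison lemma---is the missing ingredient in your treatment of $Y$.
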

\begin{proof}
The proof of (\ref{varphiasymptotics}) is based on an analysis of the integral equations (\ref{XYdef}) in the limit $k \to \infty$. 
The proof proceeds through a series of lemmas.

\begin{lemma} \label{lemma1}
 $\{X_j(x)\}_1^{m+1}$ are weakly differentiable functions of $x \geq 0$ satisfying
\begin{align}\label{Fjbounds}
\begin{cases}
(1 + x)^n X_j(x) \in L^1([0,\infty)) \cap L^\infty([0,\infty)), \\
(1 + x)^n X_j'(x) \in L^1([0,\infty)),
\end{cases} \qquad j = 1, \dots, m+1.
\end{align}
\end{lemma}
\stepproofbegin
The assumption (\ref{ujassump}) implies
\begin{align}\label{uibound}
\begin{cases}
 |u_0^{(i)}(x)| \leq \frac{C}{(1 + x)^n}, \qquad i = 0, 1, \dots, m+1,
	\\
  |u_1^{(i)}(x)| \leq \frac{C}{(1 + x)^n}, \qquad i = 0, 1, \dots, m,
\end{cases} \ x \geq 0.
\end{align}
Indeed, if $i = 0, 1, \dots, m+1$ and $x \geq 0$, then 
\begin{align}\label{1xuiest}
\big|(1 + x)^n u_0^{(i)}(x)\big| \leq |u_0^{(i)}(0)| + \int_0^x \big|n(1 + x')^{n-1}u_0^{(i)}(x') + (1+x')^n u_0^{(i+1)}(x')\big|dx' < C,
\end{align}
and the proof for $u_1^{(i)}$ is similar.

Let $S_j$ refer to the statement
$$
\begin{cases}
\text{$(1 + x)^n \partial^i X_j^{(o)} \in L^1([0,\infty))$ for $i = 0,1, \dots, m+2-j$,}
	\\
\text{$(1 + x)^n \partial^i X_j^{(d)} \in L^1([0,\infty))$ for $i = 0,1, \dots, m+3-j$.}
\end{cases}$$
By (\ref{ujassump}) and (\ref{uibound}),
\begin{align*}
\lim_{x\to \infty} (1 + x)^{n+1} \int_x^\infty &[(u_{0x}(x')+u_{1}(x'))^2+|\cos u_0(x')-1|] dx' = 0,
\end{align*}
so an integration by parts yields
\begin{align}\label{xF1dest}
& \|(1 + x)^n X_1^{(d)}\|_{L^1([0,\infty))} < \infty.
 \end{align}
The bound (\ref{xF1dest}) together with the expression (\ref{x1}) for $X_1$ shows that $S_1$ holds.
Similar estimates together with the relations (\ref{xrecursive}) imply that if $1 \leq j \leq m$ and $S_j$ holds, then $S_{j+1}$ also holds. Thus, by induction, $\{S_j\}_{j=1}^{m+1}$ hold. This shows that $\{X_j\}_1^{m+1}$ are weakly differentiable functions satisfying $(1+x)^n\partial^iX_j(x) \in L^1([0,\infty))$ for $i =0,1$ and $j = 1, \dots, m+1$. The boundedness of $(1+x)^nX_j(x)$ follows by an estimate analogous to (\ref{1xuiest}).
\proofendcontinue

\begin{lemma}\label{lemma2}
There exists a $K> 1$ such that $X_p(x,k)^{-1}$ exists for all $k \in \C$ with $|k|\geq K$.
Moreover, letting $A := -i\theta_1 \sigma_3 + \mathsf{U}$ and
\begin{align}\label{Apdef}
A_p(x,k) := \big(\partial_x X_{p}(x,k) - i \theta_1 X_p(x,k) \sigma_3\big)X_p(x,k)^{-1}, \qquad x \geq 0, \  |k|\geq K,
\end{align}
the difference $\Delta(x,k) := A(x,k) - A_p(x,k)$ satisfies
\begin{align}\label{DeltaEbounda}
\big|\partial_k^j\Delta(x,k)\big| \leq \frac{Cf(x)}{|k|^{m+1}(1+x)^n}, \qquad x \geq 0, \  |k| \geq K, \  j = 0,1, \dots, n,
\end{align}
where $f$ is a function in $L^1([0,\infty))$.
In particular,
\begin{align}\label{ABsmall}
\|\partial_k^j\Delta(\cdot, k)\|_{L^1([x,\infty))} \leq \frac{C}{|k|^{m+1}(1+x)^n}, \qquad x \geq 0,  \  |k|\geq K, \ j = 0, 1, \dots, n.
\end{align}
\end{lemma}
\stepproofbegin
By Lemma \ref{lemma1}, there exists a bounded function $g \in L^1([0,\infty))$ such that
\begin{align}\label{Fjxbound}
|X_j(x)| \leq \frac{g(x)}{(m+1)(1+x)^n}, \qquad x \geq 0, \  j = 1, \dots, m+1.
\end{align}
In particular,
$$\bigg| \sum_{j=1}^{m+1} \frac{X_j(x)}{k^j}\bigg| \leq \frac{g(x)}{|k|(1+x)^n}, \qquad x \geq 0, \  |k| \geq 1.$$
Choose $K > \max(1, \|g\|_{L^\infty([0,\infty))})$. Then $X_p(x,k)^{-1}$ exists whenever $|k| \geq K$ and is given by the absolutely and uniformly convergent Neumann series
$$X_p(x,k)^{-1} = \sum_{l =0}^\infty \bigg(-\sum_{j=1}^{m+1} \frac{X_j(x)}{k^j}\bigg)^l, \qquad x \geq 0,\  |k| \geq K.$$
Furthermore,
\begin{align}\label{tailestimate}
\bigg|\sum_{l = m+2}^\infty \bigg(-\sum_{j=1}^{m+1} \frac{X_j(x)}{k^j}\bigg)^l\bigg|
\leq \sum_{l = m+2}^\infty \bigg(\frac{g(x)}{|k|(1+x)^n}\bigg)^l
\leq \frac{Cg(x)}{|k|^{m+2}(1+x)^n},
\end{align}
for $x \geq 0$ and $|k| \geq K$. Now let $\Phi_0(x) + \frac{\Phi_1(x)}{k} + \frac{\Phi_2(x)}{k^2} + \cdots$ be the formal power series expansion of $X_p(x,k)^{-1}$ as $k \to \infty$, i.e.
\begin{align*}
& \Phi_0(x) = I, \quad \Phi_1(x) = -X_1(x), \quad \Phi_2(x) = X_1(x)^2 - X_2(x),
	\\
& \Phi_3(x) = X_1(x)X_2(x) + X_2(x) X_1(x) - X_1(x)^3 - X_3(x), \quad \dots.
\end{align*}
Equation (\ref{Fjxbound}) and the inequality (\ref{tailestimate}) imply that the function $\mathcal{E}(x,k)$ defined by
$$\mathcal{E}(x,k) = X_p(x,k)^{-1} - \sum_{j=0}^{m+1} \frac{\Phi_j(x)}{k^j}$$
satisfies
\begin{align}\label{Eestimate}
|\mathcal{E}(x,k)| \leq \frac{Cg(x)}{|k|^{m+2}(1+x)^n}, \qquad x \geq 0, \  |k| \geq K.
\end{align}

Let $A_p(x,k)$ be given by (\ref{Apdef}).
Since $X_{formal}$ is a formal solution of (\ref{xpartF}), the coefficient of $k^{-j}$ in the formal expansion of $\Delta = A - A_p$ as $k \to \infty$  vanishes for $j \leq m$; hence, in view of Lemma \ref{lemma1} and (\ref{Eestimate}),
\begin{align}\nonumber
|\Delta|
& = \bigg|A - (\partial_xX_{p} - i \theta_1 X_p \sigma_3)\bigg(\sum_{j=0}^{m+1} \frac{\Phi_j}{k^j} + \mathcal{E}\bigg)\bigg|
	\\\nonumber
& \leq \frac{Cf(x)}{|k|^{m+1}(1+x)^n} + \bigg|(\partial_xX_{p} - i\theta_1 X_p \sigma_3)\mathcal{E}\bigg|
	\\ \nonumber
& \leq \frac{Cf(x)}{|k|^{m+1}(1+x)^n}, \qquad x \geq 0, \  |k| \geq K,
\end{align}
where $f$ is a function in $L^1([0,\infty))$. This proves (\ref{DeltaEbounda}) for $j = 0$. Differentiating $\Delta$ with respect to $k$ and estimating in a similar way, we obtain (\ref{DeltaEbounda}) also for $j = 1, \dots, n$. 
\proofendcontinue

In what follows, we let $K > 1$ be the constant from Lemma \ref{lemma2} and note that it is enough to prove the inequalities in (\ref{varphiasymptotics}) for $|k| \geq K$.

\begin{lemma}
$[X]_2$ satisfies (\ref{varphiasymptoticsa}) for $j = 0$.
\end{lemma}
\stepproofbegin
Using that $X(x,k)$ satisfies (\ref{xpartF}), we compute
\begin{align*}
  (X_p^{-1}X)_x & = - X_p^{-1}(\partial_xX_p) X_p^{-1} X + X_p^{-1} \partial_xX
  	\\
&  = - X_p^{-1}(A_{p}X_p +i\theta_1X_p\sigma_3) X_p^{-1} X + X_p^{-1} (AX +i\theta_1X\sigma_3)
	\\
&  = X_p^{-1}\Delta X - i\theta_1[\sigma_3, X_p^{-1}X],
\end{align*}
and hence
$$\Big(e^{i\theta_1x\hat{\sigma}_3}(X_p^{-1}X)\Big)_x = e^{i\theta_1x\hat{\sigma}_3}(X_p^{-1} \Delta X).$$
Integrating and using that $X \to I$ as $x \to \infty$, we conclude that $X$ satisfies the Volterra equation
\begin{align}\label{hatXhatXp}
X(x,k) = X_p(x,k) - \int_x^\infty X_p(x,k) e^{i\theta_1(x'-x)\hat{\sigma}_3} (X_p^{-1}\Delta X)(x',k) dx'.
\end{align}
Letting $\Psi = [X]_2$ and $\Psi_0(x,k) =  [X_p(x,k)]_2$, we can write the second column of (\ref{hatXhatXp}) as
\begin{align} \label{integraleq}
\Psi(x,k) = \Psi_0(x,k) - \int_x^\infty E(x,x',k) \Delta(x', k) \Psi(x', k) dx', \qquad x \geq 0, \ k \in \bar{\C}_+^K,
\end{align}
where
\begin{align}\label{E1E1E1def}
& E(x,x',k) =  X_p(x,k) \begin{pmatrix} e^{2i\theta_1(x' - x)} & 0 \\ 0 & 1 \end{pmatrix} X_p(x',k)^{-1}.
\end{align}
Define $\Psi_l$ for $l \geq 1$ inductively by
\begin{align*}
\Psi_{l+1}(x,k) = - \int_x^\infty E(x,x',k) \Delta(x', k) \Psi_l(x', k) dx', \qquad x \geq 0, \  k \in \bar{\C}_+^K.
\end{align*}
Then
\begin{align}\label{Philiterated}
\Psi_l(x,k) = & (-1)^l \int_{x = x_{l+1} \leq x_l \leq \cdots \leq x_1 < \infty} \prod_{i = 1}^l E(x_{i+1}, x_i, k) \Delta(x_i, k) \Psi_0(x_1, k) dx_1 \cdots dx_l.
\end{align}
The estimates
\begin{align}\label{DeltaEboundb}
  |\partial_k^jE(x, x', k)| < C(1+|x' - x|)^j, \qquad 0 \leq x \leq x' < \infty, \  k \in \bar{\C}_+^K,
\end{align}
and (\ref{ABsmall}) with $j = 0$, give, for $x \geq 0$, $k \in \bar{\C}_+^K$, and $l = 1, 2, \dots$,
\begin{align}\nonumber
|\Psi_l(x,k)| \leq \; & C^l \int_{x \leq x_l \leq \cdots \leq x_1 < \infty} \prod_{i = 1}^l  |\Delta(x_i, k)| |\Psi_0(x_1, k)| dx_1 \cdots dx_l
	\\ \nonumber
\leq & \; \frac{C^l}{l!}\|\Psi_0(\cdot, k)\|_{L^\infty([x,\infty))} \|\Delta(\cdot, k)\|_{L^1([x,\infty))}^l
	\\ \label{Philestimate}
\leq & \; \frac{C^l}{l!} \|\Psi_0(\cdot, k)\|_{L^\infty([x,\infty))}\bigg(\frac{C}{|k|^{m+1}(1+x)^n}\bigg)^l.
\end{align}
This implies
$$|\Psi_l(x,k)| \leq \frac{C^l}{l!}\bigg(\frac{C}{|k|^{m+1}(1+x)^n}\bigg)^l, \qquad x \geq 0, \  k \in \bar{\C}_+^K, \ l = 1, 2, \dots,$$
because, by Lemma \ref{lemma1},
$$|\Psi_0(x, k)| < C, \qquad x \geq 0, \ k \in \bar{\C}_+^K.$$
Hence the series $\sum_{l=0}^\infty \Psi_l(x,k)$
converges absolutely and uniformly for $x \geq 0$ and $k \in \bar{\C}_+^K$ to the solution $\Psi(x,k)$ of (\ref{integraleq}). Since
\begin{align}\label{Psiminusf}
|\Psi(x,k) - \Psi_0(x,k)| \leq \sum_{l=1}^\infty | \Psi_l(x,k)| \leq \frac{C}{|k|^{m+1}(1+x)^n}, \qquad x \geq 0, \  k \in \bar{\C}_+^K,
\end{align}
this proves (\ref{varphiasymptoticsa}) for $j = 0$.
\proofendcontinue

\begin{lemma}
$[X]_2$ satisfies (\ref{varphiasymptoticsa}) for $j = 1, \dots, n$.
\end{lemma}
\stepproofbegin
Let
\begin{align}\label{Lambda0}
\Lambda_0(x,k) = \;& [\partial_kX_p(x,k)]_2
- \int_x^\infty \frac{\partial}{\partial k}\big[E(x,x',k) \Delta(x', k)\big] \Psi(x', k) dx'.
\end{align}
Differentiating the integral equation (\ref{integraleq}) with respect to $k$, we find that $\Lambda := \partial_k\Psi$ satisfies
\begin{align}\label{integraleq3}
\Lambda(x,k) = \Lambda_0(x,k)
- \int_x^\infty E(x,x',k) \Delta(x', k) \Lambda(x', k) dx'
\end{align}
for each $k$ in the interior of $\bar{\C}_+^K$; the differentiation can be justified by dominated convergence using (\ref{DeltaEbounda}), (\ref{DeltaEboundb}), and a Cauchy estimate for $\partial_k\Psi$.
We seek a solution of (\ref{integraleq3}) of the form $\Lambda = \sum_{l=0}^\infty \Lambda_l$,  where the $\Lambda_l$ are defined by replacing $\{\Psi_l\}$ by $\{\Lambda_l\}$ in (\ref{Philiterated}). Proceeding as in (\ref{Philestimate}), we find
$$|\Lambda_l(x,k)| \leq \frac{C^l}{l!}\|\Lambda_0(\cdot, k)\|_{L^\infty([x,\infty))}\bigg(\frac{C}{|k|^{m+1}(1+x)^n}\bigg)^l, \qquad x \geq 0, \  k \in \bar{\C}_+^K.$$
Using (\ref{DeltaEbounda}), (\ref{DeltaEboundb}), and (\ref{Psiminusf}) in (\ref{Lambda0}), we obtain
\begin{align}\label{FF1}
|\Lambda_0(x,k) - [\partial_kX_p(x,k)]_2 | \leq \frac{C}{|k|^{m+1}(1+x)^{n-1}}, \qquad x \geq 0, \  k \in \bar{\C}_+^K.
\end{align}
In particular $\|\Lambda_0(\cdot, k)\|_{L^\infty([x,\infty))}$ is bounded for $k \in \bar{\C}_+^K$ and $x \geq 0$.
Thus, $\sum_{l=0}^\infty \Lambda_l$ converges uniformly on $[0,\infty) \times \bar{\C}_+^K$ to a solution $\Lambda$ of (\ref{integraleq3}), which satisfies the following analog of (\ref{Psiminusf}):
\begin{align}\label{FF2}
|\Lambda(x,k) - \Lambda_0(x,k)| \leq
\frac{C}{|k|^{m+1}(1+x)^{n}}, \qquad x \geq 0,  \  k \in \bar{\C}_+^K.
\end{align}
Equations (\ref{FF1}) and (\ref{FF2}) show that $[X]_2 = \Psi$ satisfies (\ref{varphiasymptoticsa}) for $j = 1$.

Proceeding inductively, we find that $\Lambda^{(j)} := \partial_k^j\Psi$ satisfies an integral equation of the form
\begin{align*}
\Lambda^{(j)}(x,k) = \Lambda_{0,j}(x,k)
- \int_x^\infty E(x,x',k) \Delta(x', k) \Lambda^{(j)}(x', k) dx',\qquad j = 1, \dots, n,
\end{align*}
where
$$\big|\Lambda_{0,j}(x, k) - [\partial_k^{j} X_p(x, k)]_2 \big| \leq
\frac{C}{|k|^{m+1}(1+x)^{n-j}}, \qquad x \geq 0, \  k \in \bar{\C}_+^K.$$
If $1 \leq j \leq n$, then $\|\Lambda_{0,j}(\cdot, k)\|_{L^\infty([x,\infty))}$ is bounded for $k \in \bar{\C}_+^K$ and $x \geq 0$; hence the associated series $ \Lambda^{(j)} = \sum_{l=0}^\infty \Lambda^{(j)}_l$ converges uniformly on $[0,\infty) \times \bar{\C}_+^K$ to a bounded solution with the desired properties.
\proofendcontinue

The above lemmas prove the theorem for $X$. We now consider $[Y]_2$.

\begin{lemma}\label{lemma5}
$\{Z_j(x), W_j(x)\}_1^{m+1}$ are weakly differentiable functions of $x \geq 0$ satisfying
\begin{align}\label{ZjWj}
\begin{cases}
Z_j, W_j \in L^\infty([0,\infty)), \\
Z_j', W_j' \in L^1([0,\infty)),
\end{cases} \qquad j = 1, \dots, m+1.
\end{align}
\end{lemma}
\stepproofbegin
Let $S_j$ and $\mathcal{S}_j$ refer to the statements
$$
\begin{cases}
\text{$\partial^i Z_j^{(o)} \in L^1([0,\infty))$ for $i = 1, \dots, m+2-j$,}
	\\
\text{$\partial^i Z_j^{(d)} \in L^1([0,\infty))$ for $i = 1, \dots, m+3-j$,}
\end{cases}$$
and
$$\begin{cases}
\text{$\partial^i W_j^{(d)} \in L^1([0,\infty))$ for $i = 1, \dots, m+2-j$,}
	\\
\text{$\partial^i W_j^{(o)} \in L^1([0,\infty))$ for $i = 1, \dots, m+3-j$,}
\end{cases}$$
respectively.
Using the expressions (\ref{z1}) and (\ref{w1}) for $Z_1$ and $W_1$, we conclude that $S_1$ and $\mathcal{S}_1$ hold. The relations (\ref{xrecursive}) and (\ref{xrecursive2}) imply by induction that $\{S_j, \mathcal{S}_j\}_{j=1}^{m+1}$ hold. This shows that $\{Z_j, W_j\}_1^{m+1}$ are weakly differentiable functions satisfying $Z_j', W_j' \in L^1([0,\infty))$ for $j = 1, \dots, m+1$. Integration shows that $Z_j, W_j \in L^\infty([0,\infty))$ for $j = 1, \dots, m+1$. \proofendcontinue

We write
$$Y_p(x,k) = Z_p(x,k) + W_p(x,k) e^{2i\theta_1 x\sigma_3},$$
where $Z_p$ and $W_p$ are defined by
$$Z_p(x,k) = I + \frac{Z_1(x)}{k} + \cdots + \frac{Z_{m+1}(x)}{k^{m+1}}, \qquad
W_p(x,k) = \frac{W_1(k)}{k} + \cdots + \frac{W_{m+1}(x)}{k^{m+1}}.$$

\begin{lemma}\label{lemma6}
There exists a $K > 1$ such that $Z_p(x,k)^{-1}$ and $W_p(x,k)^{-1}$ exist for all $k \in \C$ with $|k|\geq K$.
Moreover, letting $A :=-i\theta_1 \sigma_3 + \mathsf{U} $ and
\begin{align*}
\begin{cases}
A_{p,1}(x,k) := \big(\partial_xZ_{p}(x,k) - i\theta_1 Z_p(x,k) \sigma_3\big)Z_p(x,k)^{-1},
	\\
A_{p,2}(x,k) := \big(\partial_xW_{p}(x,k) + i\theta_1 W_p(x,k) \sigma_3\big)W_p(x,k)^{-1},
\end{cases}
\quad x \geq 0, \  |k| \geq K,
\end{align*}
the differences
$$\Delta_l(x,k) := A(x,k) - A_{p,l}(x,k), \qquad l = 1,2,$$
satisfy
\begin{align}\label{EEDeltaest1}
|\partial_k^j\Delta_l(x,k)| \leq \frac{C + f(x)}{|k|^{m+1}}, \qquad x \geq 0, \  |k| \geq K, \  j = 0, 1, \dots, n, \  l = 1,2,
\end{align}
where $f$ is a function in $L^1([0,\infty))$.
In particular,
\begin{align}\label{EEDeltaest}
\|\partial_k^j\Delta_l(\cdot, k)\|_{L^1([0,x])} \leq \frac{Cx}{|k|^{m+1}}, \qquad x \geq 0, \  |k| \geq K,  \  j = 0, 1, \dots, n, \  l = 1,2.
\end{align}
\end{lemma}
\stepproofbegin
The proof uses Lemma \ref{lemma5} and is similar to that of Lemma \ref{lemma2}.
\proofendcontinue

In the remainder of the proof, $K > 1$ will denote the constant from Lemma \ref{lemma6}.

\begin{lemma}
We have
\begin{subequations}
\begin{align}\label{ZYWa}
& \bigg| \frac{\partial^j}{\partial k^j}\Big[Z_p(x,k) e^{-i\theta_1x\hat{\sigma}_3} Z_p^{-1}(0,k) - Y_p(x,k)\Big]_2 \bigg| \leq C(1+x)^j\frac{1 + |e^{-2i\theta_1x}|}{|k|^{m+2}},
	\\ \label{ZYWb}
& \bigg| \frac{\partial^j}{\partial k^j}\Big[W_p(x,k) e^{i\theta_1x\hat{\sigma}_3} W_p^{-1}(0,k) - Y_p(x,k)e^{-2i\theta_1x\sigma_3}\Big]_2 \bigg| \leq C(1+x)^j\frac{1 + |e^{2i\theta_1x}|}{|k|^{m+2}},
\end{align}
for all $x \geq 0$, $|k|\geq K$, and $j = 0, 1, \dots, n$.
\end{subequations}
\end{lemma}
\stepproofbegin
We write
$$Y_{formal}(x,k) = Z_{formal}(x,k) + W_{formal}(x,k)e^{2i\theta_1x\sigma_3},$$
where
$$Z_{formal}(x,k) = I + \frac{Z_1(x)}{k} + \frac{Z_2(x)}{k^2} + \cdots, \qquad
W_{formal}(x,k) = \frac{W_1(x)}{k} + \frac{W_2(x)}{k^2} + \cdots.$$
Then
\begin{align}\label{ZZY}
Z_{formal}(x,k) e^{-i\theta_1x\hat{\sigma}_3} Z_{formal}^{-1}(0,k) = Y_{formal}(x,k)
\end{align}
formally to all orders in $k$ as $k \to \infty$. Indeed, both sides of (\ref{ZZY}) are formal solutions of (\ref{xpartF}) satisfying the same initial condition at $x = 0$. Truncating (\ref{ZZY}) at order $k^{-m-1}$, using Lemma \ref{lemma5}, and estimating the inverse $Z_p^{-1}(0,k)$ as in the proof of Lemma \ref{lemma2}, it follows that
$$Z_p(x,k) e^{-i\theta_1x\hat{\sigma}_3} Z_p^{-1}(0,k) = Y_p(x,k) + O(k^{-m-2}) + O(k^{-m-2})e^{2i\theta_1x\sigma_3}.$$
This gives (\ref{ZYWa}) for $j = 0$. Using the estimate $|\partial_k e^{\pm 2i\theta_1x}| \leq C |x e^{\pm 2i\theta_1x}|$ for $|k| \geq K$, we find (\ref{ZYWa}) also for $j \geq 1$.

Similarly, we have
$$W_{formal}(x,k) e^{i\theta_1x\hat{\sigma}_3} W_{formal}^{-1}(0,k) = Y_{formal}(x,k)e^{-2i\theta_1x\sigma_3}$$
to all orders in $k$ and truncation leads to (\ref{ZYWb}).
\proofendcontinue

\begin{lemma}\label{lemma8}
$[Y]_2$ satisfies (\ref{varphiasymptoticsb}).
\end{lemma}
\stepproofbegin
Using that $Y(x,k)$ satisfies (\ref{xpartF}), we compute
\begin{align}\nonumber
  (Z_p^{-1}Y)_x & = - Z_p^{-1}(\partial_xZ_p) Z_p^{-1} Y + Z_p^{-1} \partial_xY
  	\\\nonumber
&  = - Z_p^{-1}(A_{p,1}Z_p +i\theta_1Z_p\sigma_3) Z_p^{-1} Y + Z_p^{-1} (AY +i\theta_1Y\sigma_3)
	\\\nonumber
&  = Z_p^{-1}\Delta_1 Y - i\theta_1[\sigma_3, Z_p^{-1}Y].
\end{align}
Hence
$$\Big(e^{i\theta_1x\hat{\sigma}_3}(Z_p^{-1}Y)\Big)_x = e^{i\theta_1x\hat{\sigma}_3}(Z_p^{-1} \Delta_1 Y).$$
Integrating and using the initial condition $Y(0,k) = I$, we conclude that $Y$ satisfies the Volterra integral equation
\begin{align}\label{EEYVolterra}
Y(x,k) = Z_p(x,k) e^{-i\theta_1x\hat{\sigma}_3}Z_p^{-1}(0,k) + \int_0^x Z_p(x,k) e^{-i\theta_1(x-x')\hat{\sigma}_3} (Z_p^{-1}\Delta_1 Y)(x',k) dx'.
\end{align}
Letting $\Psi = [Y]_2$ and $\Psi_0(x,k) = [Z_p(x,k) e^{-i\theta_1x\hat{\sigma}_3}Z_p^{-1}(0,k) ]_2$, we can write the second column of (\ref{EEYVolterra}) as
\begin{align}\label{EEYVolterra2}
\Psi(x,k) = \Psi_0(x,k) + \int_0^x E(x,x',k) (\Delta_1 \Psi)(x',k) dx',
\end{align}
where
$$E(x,x',k) = Z_p(x,k) \begin{pmatrix} e^{-2i\theta_1(x-x')} & 0 \\ 0 & 1 \end{pmatrix} Z_p^{-1}(x',k).$$
We seek a solution $\Psi(x,k) = \sum_{l=0}^\infty \Psi_l(x,k)$ where
$$\Psi_l(x,k) = \int_{0 \leq x_1 \leq \cdots \leq x_l \leq x_{l+1}=x < \infty} \prod_{i = 1}^l E(x_{i+1}, x_i, k) \Delta_1(x_i, k) \Psi_0(x_1, k) dx_1 \cdots dx_l.$$
The estimates
\begin{align}\label{EEDeltaEboundb}
|\partial_k^jE(x, x', k)| < C(1+|x' - x|)^j, \qquad 0 \leq x' \leq x < \infty, \  k \in \bar{\C}_-^K, \  j = 0, 1, \dots, n,
\end{align}
and
$$|\Psi_0(x,k)| \leq C, \qquad x \geq 0, \  k \in \bar{\C}_-^K,$$
together with  (\ref{EEDeltaest}) yield
\begin{align*}\nonumber
|\Psi_l(x,k)| \leq \; & C^l \int_{0 \leq x_1 \leq \cdots \leq x_l \leq x < \infty} \prod_{i = 1}^l  |\Delta_1(x_i, k)| |\Psi_0(x_1, k)| dx_1 \cdots dx_l
	\\ \nonumber
\leq & \; \frac{C^l}{l!}\|\Psi_0(\cdot, k)\|_{L^\infty([0,x])} \|\Delta_1(\cdot, k)\|_{L^1([0,x])}^l
	\\
\leq &\; \frac{C^l}{l!} \bigg(\frac{Cx}{|k|^{m+1}}\bigg)^l, \qquad x \geq 0, \  k \in \bar{\C}_-^K.
\end{align*}
Hence
\begin{align}\label{EEPsiPsi0}
|\Psi(x,k) - \Psi_0(x,k)| \leq \sum_{l=1}^\infty | \Psi_l(x,k)| \leq \frac{Cx e^{\frac{Cx}{|k|^{m+1}}}}{|k|^{m+1}}, \qquad x \geq 0, \  k \in \bar{\C}_-^K.
\end{align}
Equations (\ref{ZYWa}) and (\ref{EEPsiPsi0}) prove the second column of (\ref{varphiasymptoticsb}) for $j = 0$.

Differentiating the integral equation (\ref{EEYVolterra2}) with respect to $k$, we find that $\Lambda := \partial_k\Psi$ satisfies
\begin{align}\label{EEintegraleq3}
\Lambda(x,k) = \Lambda_0(x,k)
+ \int_0^x E(x,x',k) \Delta_1(x', k) \Lambda(x', k) dx'
\end{align}
for each $k$ in the interior of $\bar{\C}_-^K$, where
\begin{align}\label{EELambda0}
\Lambda_0(x,k) = \;& [\partial_k\Psi_0(x,k)]_2
+ \int_0^x \frac{\partial}{\partial k}\big[E(x,x',k) \Delta_1(x', k)\big] \Psi(x', k) dx'.
\end{align}
We seek a solution of (\ref{EEintegraleq3}) of the form $\Lambda = \sum_{l=0}^\infty \Lambda_l$. Proceeding as above, we find
$$|\Lambda_l(x,k)| \leq \frac{C^l}{l!}\|\Lambda_0(\cdot, k)\|_{L^\infty([0, x])}\bigg(\frac{Cx}{|k|^{m+1}}\bigg)^l, \qquad x \geq 0, \  k \in \bar{\C}_-^K.$$
Using (\ref{Festb}), (\ref{EEDeltaest1}), and (\ref{EEDeltaEboundb}) in (\ref{EELambda0}), we obtain
\begin{align}\label{EEFF1}
\big|\Lambda_0(x,k) - [\partial_k\Psi_0(x,k)]_2 \big| \leq \frac{C(1+x)^2}{|k|^{m+1}}, \qquad x \geq 0, \  k \in \bar{\C}_-^K.
\end{align}
Thus $\sum_{l=0}^\infty \Lambda_l$ converges uniformly on compact subsets of $[0,\infty) \times \bar{\C}_-^K$ to a solution $\Lambda$ of (\ref{EEintegraleq3}), which satisfies
\begin{align}\label{EEFF2}
|\Lambda(x,k) - \Lambda_0(x,k)|
\leq \frac{C(1+x)^2xe^{\frac{Cx}{|k|^{m+1}}}}{|k|^{m+1}}, \qquad x \geq 0,  \  k \in \bar{\C}_-^K.
\end{align}
Equations (\ref{ZYWa}), (\ref{EEFF1}), and (\ref{EEFF2}) show that $[Y]_2 = \Psi$ satisfies (\ref{varphiasymptoticsb}) for $j = 1$.
Extending the above argument, we find that (\ref{varphiasymptoticsb}) holds also for $j = 2, \dots, n$.
\proofendcontinue

\begin{lemma} 
$[Y]_2$ satisfies (\ref{varphiasymptoticsc}).
\end{lemma}
\stepproofbegin
Let $y(x,k) = Y(x,k) e^{-2i\theta_1 x\sigma_3}$. Then $y$ satisfies $y_x = Ay - i\theta_1y\sigma_3$.
Thus
\begin{align}\nonumber
  (W_p^{-1}y)_x & = - W_p^{-1}(\partial_xW_p) W_p^{-1} y + W_p^{-1} \partial_x y
  	\\\nonumber
&  = - W_p^{-1}(A_{p,2}W_p - i\theta_1W_p\sigma_3) W_p^{-1} y + W_p^{-1} (Ay - i\theta_1 y\sigma_3)
	\\\label{Wpinvyx}
&  = W_p^{-1}\Delta_2 y + i\theta_1[\sigma_3, W_p^{-1}y].
\end{align}
Hence
$$\Big(e^{- i\theta_1x\hat{\sigma}_3}(W_p^{-1}y)\Big)_x = e^{- i\theta_1x\hat{\sigma}_3}(W_p^{-1} \Delta_2 y).$$
Integrating and using the initial condition $y(0,k) = I$, we conclude that $y$ satisfies the following Volterra integral equation:
\begin{align}\label{yWhatVolterra}
y(x,k) = W_p(x,k) e^{i\theta_1x\hat{\sigma}_3}W_p^{-1}(0,k) + \int_0^x W_p(x,k) e^{-i\theta_1(x'-x)\hat{\sigma}_3} (W_p^{-1}\Delta_2 y)(x',k) dx'.
\end{align}
Letting $\Psi = [y]_2$ and $\Psi_0(x,k) = [W_p(x,k) e^{i\theta_1x\hat{\sigma}_3}W_p^{-1}(0,k) ]_2$, we can write the second column of (\ref{yWhatVolterra}) as
$$\Psi(x,k) = \Psi_0(x,k) + \int_0^x E(x,x',k) (\Delta_2 \Psi)(x',k) dx',$$
where
$$E(x,x',k) = W_p(x,k) \begin{pmatrix} e^{-2i\theta_1(x'-x)} & 0 \\ 0 & 1 \end{pmatrix} W_p^{-1}(x',k).$$
As in the proof of Lemma \ref{lemma8}, the estimates
$$|\partial_k^jE(x, x', k)| < C(1+|x' - x|)^j, \qquad 0 \leq x' \leq x < \infty, \  k \in \bar{\C}_+^K, \  j = 0, 1, \dots, n,$$
and
$$|\Psi_0(x,k)| \leq C, \qquad x \geq 0, \  k \in \bar{\C}_+^K,$$
together with  (\ref{EEDeltaest}) yield
\begin{align}\label{HHPsiPsi0}
|\Psi(x,k) - \Psi_0(x,k)| \leq \frac{Cx e^{\frac{Cx}{|k|^{m+1}}}}{|k|^{m+1}}, \qquad x \geq 0, \  k \in \bar{\C}_+^K.
\end{align}
Equations (\ref{ZYWb}) and (\ref{HHPsiPsi0}) prove the second column of (\ref{varphiasymptoticsc}) for $j = 0$. Proceeding as in the proof of Lemma \ref{lemma8}, equation (\ref{varphiasymptoticsc}) follows also for $j = 1, \dots, n$.
This completes the proof of the theorem.
\end{proof}

\subsection{Asymptotics as $k \to 0$}
We next consider the behavior of $X$ and $Y$ as $k \to 0$. 
The matrix $\mathsf{U}$ is singular at $k = 0$, hence the integral equations (\ref{XYdef}) are not well-suited for determining the behavior of $X$ and $Y$ near $k = 0$. We therefore introduce new eigenfunctions $\hat{X}$ and $\hat{Y}$ as follows. 
Define $G_0(x)$ by (cf. (\ref{Gxtdef}))
\begin{align}\label{G0xdef}
G_0(x)= (-1)^{N_x}\begin{pmatrix}
  \cos \frac{u_0(x)}{2} & -\sin \frac{u_0(x)}{2} \\
  \sin \frac{u_0(x)}{2}  & \cos \frac{u_0(x)}{2}
 \end{pmatrix},
\end{align}
where the factor $(-1)^{N_x}$ has been inserted to ensure that $G_0(x) \to I$ as $x \to \infty$. 
If $X$ satisfies (\ref{xpartF}), then $\hat{X} = G_0^{-1}X$ satisfies
\begin{align}\label{xpartF-0}
\hat{X}_x + i \theta_1 [\sigma_3, \hat{X}] = \hat{\mathsf{U}}\hat{X},
\end{align}
where the function $\hat{\mathsf{U}}$ is regular at $k = 0$:
$$\hat{\mathsf{U}}(x,k) = G_0^{-1}(\mathsf{U}G_0 - G_{0x}) + i\theta_1(\sigma_3 - G_0^{-1}\sigma_3G_0)
= \hat{\mathsf{U}}_0(x)+ k\hat{\mathsf{U}}_1(x)$$
with
$$\hat{\mathsf{U}}_0(x)=\frac{i(u_{0x}-u_1)}{4}\sigma_2, \qquad 
  \hat{\mathsf{U}}_1(x) = \frac{i\sin u_0}{4}\sigma_1 - \frac{i(\cos u_0 -1)}{4}\sigma_3.$$
This leads to the following alternative representations for $X$ and $Y$:
\begin{align}\label{XXhatYYhat}
X(x,k) = G_0(x)\hat{X}(x,k), \qquad Y(x,k) = G_0(x)\hat{Y}(x,k)e^{-i\theta_1x\hat{\sigma}_3}G_0^{-1}(0),
\end{align}
where $\hat{X}$ and $\hat{Y}$ satisfy the Volterra integral equations
\begin{subequations}\label{hatXYdef}
\begin{align}  \label{hatXYdefa}
 & \hat{X}(x,k) = I + \int_{\infty}^x e^{i\theta_1(x'-x)\hat{\sigma}_3} (\hat{\mathsf{U}}\hat{X})(x',k) dx',
  	\\\label{hatXYdefb}
& \hat{Y}(x,k) = I + \int_{0}^x e^{i\theta_1(x'-x)\hat{\sigma}_3} (\hat{\mathsf{U}}\hat{Y})(x',k) dx'.
\end{align}
\end{subequations}
  
If we apply the same procedure that led to the estimates (\ref{Fest}) to the integral equations (\ref{hatXYdef}), we find that $\hat{X}$ and $\hat{Y}$ satisfy the estimates
\begin{subequations}\label{Fest-0}
\begin{align}\label{Festa-0}
& \bigg|\frac{\partial^j}{\partial k^j}\big(\hat{X}(x,k) - I\big) \bigg| \leq
\frac{C|k|^{-2j}}{(1+x)^{n-j}}, \qquad k \in (\bar{D}_3, \bar{D}_2)\setminus \{0\},
	\\\label{Festb-0}
& \bigg|\frac{\partial^j}{\partial k^j}\Big(\hat{Y}(x,k) - I\Big) \bigg| \leq
C(1 + x)^j |k|^{-2j}, \qquad k \in (\bar{D}_2, \bar{D}_3)\setminus \{0\},
	\\\label{Festc-0}
& \bigg|\frac{\partial^j}{\partial k^j}\Big((\hat{Y}(x,k) - I)e^{-2i\theta_1x\sigma_3}\Big) \bigg| \leq
C(1 + x)^j|k|^{-2j}, \qquad k \in (\bar{D}_3, \bar{D}_2) \setminus \{0\},
\end{align}
\end{subequations}
for $x \geq 0$ and $ j = 0, 1, \dots, n$.
It is possible to derive much more precise formulas for the behavior of $\hat{X}$ and $\hat{Y}$ for small $k$.
Indeed, our next theorem (Theorem \ref{xth3}) states that the asymptotic behavior of $\hat{X}$ and $\hat{Y}$ as $k \to 0$ can be obtained by considering formal power series solutions of equation (\ref{xpartF-0}). Since $X,Y$ are related to $\hat{X}, \hat{Y}$ by (\ref{XXhatYYhat}), this also gives the behavior of $X, Y$ as $k \to 0$ (see Corollary \ref{xcor}). Let us first find the formal solutions.

Equation (\ref{xpartF-0}) admits formal power series solutions $\hat{X}_{formal}$ and $\hat{Y}_{formal}$, normalized at $x = \infty$ and $x = 0$ respectively, such that
\begin{align}\label{Xformaldef-0}
& \hat{X}_{formal}(x,k) = I + \hat{X}_1(x)k + \hat{X}_2(x)k^2 + \cdots,
	\\\nonumber
& \hat{Y}_{formal}(x,k) =  I + \hat{Z}_1(x) k + \hat{Z}_2(x) k^2 + \cdots + \big(\hat{W}_1(x) k + \hat{W}_2(x) k^2 + \cdots \big) e^{2i\theta_1 x\sigma_3},
\end{align}
where the coefficients $\{\hat{X}_j(x), \hat{Z}_j(x), \hat{W}_j(x)\}_1^\infty$ satisfy
\begin{align}\label{Fjnormalization-0}
\lim_{x\to \infty} \hat{X}_j(x) = 0, \qquad \hat{Z}_j(0) + \hat{W}_j(0) = 0, \qquad j \geq 1.
\end{align}
Indeed, substituting
$$\hat{X} = I + {\hat{X}_1(x)}{k} + {\hat{X}_2(x)}{k^2} + \cdots$$
into (\ref{xpartF-0}), the off-diagonal terms of $O(k^{j})$ and the diagonal terms of $O(k^{j+1})$ yield the relations
\begin{align}\label{xrecursive-0}
\begin{cases}
\hat{X}_{j+1}^{(o)} = 2 i \sigma_3\big(-\partial_x \hat{X}_{j}^{(o)}-\frac{i}{2}\sigma_3 \hat{X}_{j-1}^{(o)}
+\hat{\mathsf{U}}_0 \hat{X}_j^{(d)} + \hat{\mathsf{U}}_1^{(o)} \hat{X}_{j-1}^{(d)}
+ \hat{\mathsf{U}}_1^{(d)} \hat{X}_{j-1}^{(o)}\big),
	\\
\partial_x \hat{X}_{j+1}^{(d)} = \hat{\mathsf{U}}_0 \hat{X}_{j+1}^{(o)}+ \hat{\mathsf{U}}_1^{(o)} \hat{X}_{j}^{(o)}+ \hat{\mathsf{U}}_1^{(d)} \hat{X}_j^{(d)}.
\end{cases}
\end{align}
The coefficients $\{\hat{Z}_j\}$ satisfy the equations obtained by replacing $\{\hat{X}_j\}$ with $\{\hat{Z}_j\}$ in \eqref{xrecursive-0}.
Similarly, substituting
$$\hat{X} = \big({\hat{W}_1(x)}{k} + {\hat{W}_2(x)}{k^2} + \cdots\big) e^{2i\theta_1 x\sigma_3}$$
into (\ref{xpartF-0}), the diagonal terms of $O(k^{j})$ and the off-diagonal terms of $O(k^{j+1})$ yield the relations
\begin{align}\label{xrecursive2-0}
\begin{cases}
\hat{W}_{j+1}^{(d)} = 2 i \sigma_3\big(-\partial_x \hat{W}_{j}^{(d)}-\frac{i}{2}\sigma_3 \hat{W}_{j-1}^{(d)}
+\hat{\mathsf{U}}_0 \hat{W}_j^{(o)} + \hat{\mathsf{U}}_1^{(o)} \hat{W}_{j-1}^{(o)}
+ \hat{\mathsf{U}}_1^{(d)} \hat{W}_{j-1}^{(d)}\big),
	\\
\partial_x \hat{W}_{j+1}^{(o)} = \hat{\mathsf{U}}_0 \hat{W}_{j+1}^{(d)}+ \hat{\mathsf{U}}_1^{(o)} \hat{W}_{j}^{(d)}+ \hat{\mathsf{U}}_1^{(d)} \hat{W}_j^{(o)}.
\end{cases}
\end{align}
The coefficients $\{\hat{X}_j(x), \hat{Z}_j(x), \hat{W}_j(x)\}$ are determined recursively from (\ref{Fjnormalization-0})-(\ref{xrecursive2-0}), the equations obtained by replacing $\{\hat{X}_j\}$ with $\{\hat{Z}_j\}$ in (\ref{xrecursive-0}), and the initial assignments
$$\hat{X}_{-1} = 0, \qquad \hat{X}_0 = I, \qquad \hat{Z}_{-1} = 0, \qquad \hat{Z}_0 = I, \qquad \hat{W}_{-1}=\hat{W}_0 = 0.$$
The first few coefficients are given by
\begin{align}\nonumber
\hat{X}_1(x) = &\;\frac{i(u_{0x}-u_1)}{2}\sigma_1 + \sigma_3 \frac{i}{4} \int_{\infty}^{x}\bigg[\frac{(u_{0x}-u_1)^2}{2} + 1 - \cos u_0\bigg] dx',
	\\ \label{x1-0}
\hat{X}_2(x) = &\; i\bigg[u_{0xx} - u_{1x} + i\frac{u_{0x} - u_1}{2}(\hat{X}_1)_{22} - \frac{1}{2}\sin u_0\bigg]\sigma_2
	\\ \nonumber
 & + I \int_{\infty}^{x}\bigg[\frac{i}{8}(2\cos u_0 - 2 - (u_{0x} - u_1)^2)(\hat{X}_1)_{22} + \frac{u_{0x}-u_1}{4}(u_{1x} - u_{0xx})\bigg] dx',
	\\ \nonumber
\hat{Z}_1(x) = &\; \frac{i(u_{0x}-u_1)}{2}\sigma_1 + \sigma_3 \frac{i}{4} \int_{0}^{x}\bigg[\frac{(u_{0x}-u_1)^2}{2} + 1 - \cos u_0\bigg] dx',
	\\ \nonumber
\hat{Z}_2(x) = &\; i\bigg[u_{0xx} - u_{1x} + i\frac{u_{0x} - u_1}{2}(\hat{Z}_1)_{22} - \frac{\sin u_0}{2}\bigg]\sigma_2
	\\ \nonumber
 & + I \bigg\{ \int_{0}^{x}\bigg[\frac{i}{8}(2\cos u_0 - 2 - (u_{0x} - u_1)^2)(\hat{Z}_1)_{22} + \frac{u_{0x}-u_1}{4}(u_{1x} - u_{0xx})\bigg] dx' 
 	\\
& - \frac{1}{4}(u_{0x}(0)-u_1(0))^2\bigg\},
\end{align}
and
\begin{align}\nonumber
\hat{W}_1(x) = & -\frac{i(u_{0x}(0)-u_1(0))}{2}\sigma_1,
	\\ \nonumber
\hat{W}_2(x) = &\; \frac{i}{2}(u_{0x}-u_1) (W_1)_{12} I +\sigma_2 \bigg\{\int_0^x \bigg[-\frac{1}{8}(u_{0x}-u_1)^2 + \frac{1}{4} (\cos u_0 -1)\bigg](W_1)_{12}  dx'\\ \nonumber &\;  -i\Big[u_{0xx}(0)-u_{1x}(0) - \frac{\sin u_0(0)}{2} \Big]\bigg\}.
\end{align}

We can now describe the asymptotic behavior of $\hat{X}$ and $\hat{Y}$ for small $k$.

\begin{theorem}[Asymptotics of $\hat{X}$ and $\hat{Y}$ as $k \to 0$]\label{xth3}
Let $u_0(x)$ and $u_1(x)$ be functions satisfying (\ref{ujassump}) for some given integers $m \geq 1$, $n \geq 1$, and $N_x \in \Z$.

As $k \to 0$, $\hat{X}$ and $\hat{Y}$ coincide to order $m$ with $\hat{X}_{formal}$ and $\hat{Y}_{formal}$, respectively, in the following sense: The functions
\begin{align*}
&\hat{X}_p(x,k) = I + \hat{X}_1(x)k + \cdots + \hat{X}_{m+1}(x) k^{m+1},
	\\ \nonumber
&\hat{Y}_p(x,k) = I + \hat{Z}_1(x)k + \cdots + \hat{Z}_{m+1}(x) k^{m+1}
 + \big(\hat{W}_1(x)k + \cdots + \hat{W}_{m+1}(x) k^{m+1}\big) e^{2i\theta_1 x\sigma_3},
\end{align*}
are well-defined and, for each $j = 0, 1, \dots, n$,
\begin{subequations}\label{varphiasymptotics-0}
\begin{align}\label{varphiasymptoticsa-0}
& \bigg|\frac{\partial^j}{\partial k^j}\big(\hat{X} - \hat{X}_p\big) \bigg| \leq
\frac{C|k|^{m+1-2j}}{(1+x)^{n-j}}, \qquad x \geq 0, \  k \in (\bar{D}_3, \bar{D}_2),
	\\ \label{varphiasymptoticsb-0}
& \bigg|\frac{\partial^j}{\partial k^j}\big(\hat{Y} - \hat{Y}_p\big) \bigg| \leq
C(1 + x)^{j+2}|k|^{m+1-2j}e^{Cx|k|^{m+1}}, \quad x \geq 0, \  k \in (\bar{D}_2, \bar{D}_3),
	\\ \label{varphiasymptoticsc-0}
& \bigg|\frac{\partial^j}{\partial k^j}\big((\hat{Y} - \hat{Y}_p)e^{-2i\theta_1 x\sigma_3}\big) \bigg| \leq
C(1 + x)^{j+2} |k|^{m+1-2j} e^{Cx|k|^{m+1}}, \quad  x \geq 0, \  k \in (\bar{D}_3, \bar{D}_2).
\end{align}
\end{subequations}
\end{theorem}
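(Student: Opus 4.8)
The plan is to follow, step by step, the proof of Theorem~\ref{xth2}. Equation~(\ref{xpartF-0}) has the same structure as~(\ref{xpartF}) with the roles of $k=0$ and $k=\infty$ interchanged: the potential $\hat{\mathsf{U}}(x,k)=\hat{\mathsf{U}}_0(x)+k\hat{\mathsf{U}}_1(x)$ is regular at $k=0$ and has a simple pole at $k=\infty$, mirroring $\mathsf{U}(x,k)=\mathsf{U}_0+\frac{1}{k}\mathsf{U}_1$, which is regular at $k=\infty$ and singular at $k=0$. Hence the Volterra iteration used for $X$ and $Y$ near $k=\infty$ applies near $k=0$ after interchanging the powers $1/k$ and $k$ throughout; the sign flip $\theta_1(1/k)=-\theta_1(k)$ accounts for the interchange of columns and of domains (compare $(\bar{D}_4,\bar{D}_1)$ in Theorem~\ref{xth2} with $(\bar{D}_3,\bar{D}_2)$ here). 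The only genuinely new bookkeeping is the appearance of the factors $|k|^{-2j}$ in the $k$-derivative estimates: since $\partial_k\theta_1=\frac{1}{4}(1+k^{-2})=O(|k|^{-2})$ and, more generally, $\partial_k^j\theta_1=O(|k|^{-j-1})$ near $k=0$ (while $\partial_k\hat{\mathsf{U}}=\hat{\mathsf{U}}_1$ and $\partial_k^2\hat{\mathsf{U}}=0$ cause no harm), each $k$-differentiation of an exponential $e^{2i\theta_1(\cdot)}$ costs a factor $|k|^{-2}$.

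First I would establish the small-$k$ analogue of Lemma~\ref{lemma1}: the coefficients $\{\hat{X}_j,\hat{Z}_j,\hat{W}_j\}_1^{m+1}$ determined by the recursions~(\ref{xrecursive-0})--(\ref{xrecursive2-0}) are weakly differentiable and satisfy $(1+x)^n\hat{X}_j\in L^1([0,\infty))\cap L^\infty([0,\infty))$, $(1+x)^n\hat{X}_j'\in L^1([0,\infty))$, and $\hat{Z}_j,\hat{W}_j\in L^\infty([0,\infty))$, $\hat{Z}_j',\hat{W}_j'\in L^1([0,\infty))$, for $j=1,\dots,m+1$. This is an induction on $j$ using the explicit first coefficients~(\ref{x1-0}), the pointwise bounds~(\ref{uibound}) extracted from~(\ref{ujassump}), and the integration-by-parts device used for~(\ref{xF1dest}) to control the diagonal parts $\hat{X}_j^{(d)}$; it is entirely routine.

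Next comes the analogue of Lemmas~\ref{lemma2} and~\ref{lemma6}: there is an $\varepsilon\in(0,1)$ such that the truncations $\hat{X}_p,\hat{Z}_p,\hat{W}_p$ are invertible for $0<|k|\le\varepsilon$ (Neumann series, as in the proof of Lemma~\ref{lemma2}), with $\hat{X}_p$ and $\hat{X}_p^{-1}$ uniformly bounded there; and, writing $A:=-i\theta_1\sigma_3+\hat{\mathsf{U}}$, $A_p:=(\partial_x\hat{X}_p-i\theta_1\hat{X}_p\sigma_3)\hat{X}_p^{-1}$, and $A_{p,1},A_{p,2}$ analogously from $\hat{Z}_p,\hat{W}_p$, the differences $\Delta:=A-A_p$ and $\Delta_l:=A-A_{p,l}$ satisfy $|\partial_k^j\Delta(x,k)|\le C|k|^{m+1-2j}f(x)/(1+x)^n$ and $|\partial_k^j\Delta_l(x,k)|\le C|k|^{m+1-2j}(C+f(x))$ for some $f\in L^1([0,\infty))$, $0<|k|\le\varepsilon$, $j=0,1,\dots,n$; in particular $\|\partial_k^j\Delta(\cdot,k)\|_{L^1([x,\infty))}\le C|k|^{m+1-2j}/(1+x)^n$ and $\|\partial_k^j\Delta_l(\cdot,k)\|_{L^1([0,x])}\le Cx|k|^{m+1-2j}$. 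That the formal $k$-expansion of $\Delta$ vanishes through order $k^m$ follows from $\hat{X}_{formal}$ being a formal solution of~(\ref{xpartF-0}); but one must check that the singular ($\sim k^{-1}$) contributions of $A$ and $A_p$ cancel, which they do because $[\sigma_3,I]=0$ and $\hat{X}_p\sigma_3\hat{X}_p^{-1}=\sigma_3+O(k)$, so $-i\theta_1\hat{X}_p\sigma_3\hat{X}_p^{-1}=-i\theta_1\sigma_3+O(1)$ matches the $-i\theta_1\sigma_3$ in $A$.

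With these two lemmas in hand, the remaining steps are line-by-line copies of the arguments for $[X]_2$ and $[Y]_2$. For $[\hat{X}]_2$ one rewrites~(\ref{xpartF-0}) as the Volterra equation $\hat{X}=\hat{X}_p-\int_x^\infty\hat{X}_p\,e^{i\theta_1(x'-x)\hat{\sigma}_3}(\hat{X}_p^{-1}\Delta\hat{X})\,dx'$, whose kernel $E(x,x',k)=\hat{X}_p(x,k)\,\diag(e^{2i\theta_1(x'-x)},1)\,\hat{X}_p(x',k)^{-1}$ obeys $|\partial_k^jE(x,x',k)|\le C|k|^{-2j}(1+|x'-x|)^j$ on the relevant $k$-region (where $|e^{2i\theta_1(x'-x)}|\le1$, $\hat{X}_p$ and $\hat{X}_p^{-1}$ are bounded, and $|\partial_k^je^{2i\theta_1(x'-x)}|\le C|k|^{-2j}(1+|x'-x|)^j$); iteration then gives $|\Psi_l|\le\frac{C^l}{l!}(C|k|^{m+1}/(1+x)^n)^l$ and hence~(\ref{varphiasymptoticsa-0}) for $j=0$, while differentiating the Volterra equation $j$ times and inducting, each differentiation contributing one further $|k|^{-2}$, yields~(\ref{varphiasymptoticsa-0}) for $j=1,\dots,n$. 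For $[\hat{Y}]_2$ one uses $\hat{Y}=\hat{Z}_p\,e^{-i\theta_1x\hat{\sigma}_3}\hat{Z}_p^{-1}(0,k)+\int_0^x\hat{Z}_p\,e^{-i\theta_1(x-x')\hat{\sigma}_3}(\hat{Z}_p^{-1}\Delta_1\hat{Y})\,dx'$ together with the formal identity $\hat{Z}_{formal}\,e^{-i\theta_1x\hat{\sigma}_3}\hat{Z}_{formal}^{-1}(0,k)=\hat{Y}_{formal}$ (both sides being formal solutions of~(\ref{xpartF-0}) agreeing at $x=0$), whose truncation gives $[\hat{Z}_p\,e^{-i\theta_1x\hat{\sigma}_3}\hat{Z}_p^{-1}(0,k)]_2=[\hat{Y}_p]_2+O(|k|^{m+2}(1+|e^{-2i\theta_1x}|))$; this produces~(\ref{varphiasymptoticsb-0}), and the same argument applied to $\hat{y}:=\hat{Y}e^{-2i\theta_1x\sigma_3}$ with $\hat{W}_p$ in place of $\hat{Z}_p$ produces~(\ref{varphiasymptoticsc-0}). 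The main obstacle I anticipate is purely one of bookkeeping: propagating the factors $|k|^{-2j}$ (and the accompanying weights $(1+x)^j$ and $e^{Cx|k|^{m+1}}$) correctly through the induction on $j$ in the $[\hat{Y}]_2$ estimates, and verifying carefully, in the analogue of Lemma~\ref{lemma2}, the cancellation of the singular $\theta_1$-terms of $A$ and $A_p$ so that $\Delta=O(|k|^{m+1})$ genuinely holds near $k=0$. No analytic idea beyond those in the proof of Theorem~\ref{xth2} is needed.
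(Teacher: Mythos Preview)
Your proposal is correct and follows essentially the same route as the paper's proof: a sequence of lemmas mirroring Lemmas~\ref{lemma1}--\ref{lemma8} with $k\leftrightarrow 1/k$, the key new ingredient being the factor $|k|^{-2j}$ arising from $\partial_k\theta_1=O(|k|^{-2})$ in the kernel estimates. One small point: the paper actually proves the sharper bound $|\partial_k^j\hat\Delta|\le C|k|^{m+1-j}f(x)/(1+x)^n$ (only one power of $|k|$ lost per derivative, not two), because $\hat\Delta$ itself is regular at $k=0$ once the singular $\theta_1\sigma_3$ contributions cancel; your weaker bound $|k|^{m+1-2j}$ is still sufficient since the dominant loss in the final estimate comes from the kernel $\hat E$, not from $\hat\Delta$.
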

\begin{proof}
The proof is based on the equations (\ref{hatXYdef}) for $\hat{X}$ and $\hat{Y}$ and bears similarities with that of Theorem \ref{xth2}. However, since differentiation of the exponential $e^{i\theta_1 x}$ with respect to $k$ generates factors $\partial_k \theta_1 = \frac{1}{4}(1 + k^{-2})$ which are singular at $k = 0$ (but regular at $k = \infty$), the analysis is more complicated for small $k$ than for large $k$.

The proof proceeds through a series of lemmas. We first consider $[\hat{X}]_2$.

\begin{lemma} \label{lemma1-0}
 $\{\hat{X}_j(x)\}_1^{m+1}$ are weakly differentiable functions of $x \geq 0$ satisfying
\begin{align}\label{Fjbounds-0}
\begin{cases}
(1 + x)^n \hat{X}_j(x) \in L^1([0,\infty)) \cap L^\infty([0,\infty)), \\
(1 + x)^n \hat{X}_j'(x) \in L^1([0,\infty)),
\end{cases} \qquad j = 1, \dots, m+1.
\end{align}
\end{lemma}
\stepproofbegin
This is proved in the same way as Lemma \ref{lemma1}.
\proofendcontinue

\begin{lemma}\label{lemma2-0}
There exists a $K < 1$ such that $\hat{X}_p(x,k)^{-1}$ exists for all $k \in \C$ with $|k|\leq K$.
Moreover, letting $\hat{A} := -i\theta_1 \sigma_3 + \hat{\mathsf{U}}$ and
\begin{align}\label{Apdef-0}
\hat{A}_p(x,k) := \big(\partial_x \hat{X}_{p}(x,k) - i \theta_1 \hat{X}_p(x,k) \sigma_3\big)\hat{X}_p(x,k)^{-1}, \qquad x \geq 0, \  |k|\leq K,
\end{align}
the difference $\hat{\Delta}(x,k) := \hat{A}(x,k) - \hat{A}_p(x,k)$ satisfies
\begin{align}\label{DeltaEbounda-0}
|\partial_k^j \hat{\Delta}(x,k)| \leq \frac{f(x)|k|^{m+1-j}}{(1 + x)^n}, \qquad x \geq 0, \  |k|\leq K, \ j = 0,1, \dots, n,
\end{align}
where $f$ is a function in $L^1([0,\infty))$.
In particular,
\begin{align}\label{ABsmall-0}
\|\partial_k^j \hat{\Delta}(\cdot, k)\|_{L^1([x,\infty))} \leq \frac{C|k|^{m+1-j}}{(1+x)^n}, \qquad x \geq 0,  \  |k|\leq K, \ j = 0,1, \dots, n.
\end{align}
\end{lemma}
\stepproofbegin
By Lemma \ref{lemma1-0}, there exists a bounded function $g \in L^1([0,\infty))$ such that
\begin{align}\label{Fjxbound-0}
|\hat{X}_j(x)| \leq \frac{g(x)}{(m+1)(1+x)^n}, \qquad x \geq 0, \  j = 1, \dots, m+1.
\end{align}
In particular,
$$\bigg| \sum_{j=1}^{m+1} \hat{X}_j(x)k^j\bigg| \leq \frac{g(x)|k|}{(1+x)^n}, \qquad x \geq 0, \  |k| \leq 1.$$
Choose $K < \min(1, \|g\|_{L^\infty([0,\infty))}^{-1})$. Then $\hat{X}_p(x,k)^{-1}$ exists whenever $|k| \leq K$ and is given by the absolutely and uniformly convergent Neumann series
$$\hat{X}_p(x,k)^{-1} = \sum_{l =0}^\infty \bigg(-\sum_{j=1}^{m+1} \hat{X}_j(x)k^j\bigg)^l, \qquad x \geq 0,\  |k| \leq K.$$
Furthermore,
\begin{align}\label{tailestimate-0}
\bigg|\sum_{l = m+2}^\infty \bigg(-\sum_{j=1}^{m+1} \hat{X}_j(x)k^j\bigg)^l\bigg|
\leq \sum_{l = m+2}^\infty \bigg(\frac{g(x)|k|}{(1+x)^n}\bigg)^l
\leq \frac{Cg(x)|k|^{m+2}}{(1+x)^n},
\end{align}
for $x \geq 0$ and $|k| \leq K$. Now let $\Phi_0(x) + \frac{\Phi_1(x)}{k} + \frac{\Phi_2(x)}{k^2} + \cdots$ be the formal power series expansion of $\hat{X}_p(x,k)^{-1}$ as $k \to 0$, i.e.
\begin{align*}
& \Phi_0(x) = I, \quad \Phi_1(x) = -\hat{X}_1(x), \quad \Phi_2(x) = \hat{X}_1(x)^2 - \hat{X}_2(x),
	\\
& \Phi_3(x) = \hat{X}_1(x)\hat{X}_2(x) + \hat{X}_2(x) \hat{X}_1(x) - \hat{X}_1(x)^3 - \hat{X}_3(x), \quad \dots.
\end{align*}
The inequalities (\ref{Fjxbound-0}) and (\ref{tailestimate-0}) imply that the function $\mathcal{E}(x,k)$ defined by
\begin{align}\label{EhatX-0}
\mathcal{E}(x,k) = \hat{X}_p(x,k)^{-1} - \sum_{j=0}^{m+1} \Phi_j(x)k^j
\end{align}
satisfies
\begin{align}\label{Eestimate-0}
|\mathcal{E}(x,k)| \leq \frac{Cg(x)|k|^{m+2}}{(1+x)^n}, \qquad x \geq 0, \  |k| \leq K.
\end{align}

Let $\hat{A}_p(x,k)$ be given by (\ref{Apdef-0}).
Since $\hat{X}_{formal}$ is a formal solution of (\ref{xpartF-0}), the coefficient of $k^{j}$ in the formal expansion of $\hat{\Delta} = \hat{A} - \hat{A}_p$ as $k \to 0$  vanishes for $j \leq m$; hence, in view of Lemma \ref{lemma1-0} and (\ref{Eestimate-0}),
\begin{align}\nonumber
|\hat{\Delta}|
& = \bigg|\hat{A} - (\partial_x\hat{X}_{p} - i \theta_1 \hat{X}_p \sigma_3)\bigg(\sum_{j=0}^{m+1} \frac{\Phi_j}{k^j} + \mathcal{E}\bigg)\bigg|
	\\\nonumber
& \leq \frac{Cf(x)|k|^{m+1}}{(1+x)^n} + \bigg|(\partial_x\hat{X}_{p} - i\theta_1 \hat{X}_p \sigma_3)\mathcal{E}\bigg|
	\\ \label{Deltaestimate-0}
& \leq \frac{Cf(x)|k|^{m+1}}{(1+x)^n}, \qquad x \geq 0, \  |k| \leq K,
\end{align}
where $f$ is a function in $L^1([0,\infty))$. This proves (\ref{DeltaEbounda-0}) for $j = 0$.

Differentiation of (\ref{EhatX-0}) gives
$$\partial_k \mathcal{E} = -\bigg(\sum_{j=0}^{m+1} \Phi_jk^j + \mathcal{E}\bigg)(\partial_k\hat{X}_p)\bigg(\sum_{j=0}^{m+1} \Phi_jk^j + \mathcal{E}\bigg) - \frac{\partial}{\partial k}\sum_{j=0}^{m+1} \Phi_j(x)k^j.$$
By means of (\ref{Fjxbound-0}) and (\ref{Eestimate-0}), we infer that
\begin{align}\label{partialEestimate-0}
|\partial_k \mathcal{E}(x,k)| \leq \frac{Cg(x)|k|^{m+1}}{(1+x)^n}, \qquad x \geq 0, \  |k| \leq K.
\end{align}
The coefficient of $k^{j}$ in the formal expansion of $\partial_k \hat{\Delta}$ as $k \to 0$  vanishes for $j \leq m-1$. Therefore, an estimate as in (\ref{Deltaestimate-0}) gives, using (\ref{Eestimate-0}) and (\ref{partialEestimate-0}), 
$$|\partial_k \hat{\Delta}| \leq \frac{Cf(x)|k|^{m}}{(1+x)^n}, \qquad x \geq 0, \  |k| \leq K.$$
This proves (\ref{DeltaEbounda-0}) for $j = 1$. The argument is easily extended to $j = 2, \dots, n$.
\proofendcontinue

In the following, we let $K < 1$ be the constant from Lemma \ref{lemma2-0} and note that it is enough to prove the inequalities in (\ref{varphiasymptotics-0}) for $|k| \leq K$. We write, for $j = 2,3$, $\bar{D}_j^K := (\bar{D}_j \setminus \{0\}) \cap \{|k| \leq K\}$.

\begin{lemma}\label{lemma3-0}
$[\hat{X}]_2$ satisfies (\ref{varphiasymptoticsa-0}) for $j = 0$.
\end{lemma}
\stepproofbegin
A computation similar to the one leading to (\ref{hatXhatXp}) shows that $\hat{X}$ satisfies the Volterra equation
\begin{align}\label{hatXhatXp-0}
\hat{X}(x,k) = \hat{X}_p(x,k) - \int_x^\infty \hat{X}_p(x,k) e^{i\theta_1(x'-x)\hat{\sigma}_3} (\hat{X}_p^{-1}\hat{\Delta} \hat{X})(x',k) dx'.
\end{align}
Letting $\hat{\Psi} = [\hat{X}]_2$ and $\hat{\Psi}_0(x,k) =  [\hat{X}_p(x,k)]_2$, we can write the second column of (\ref{hatXhatXp-0}) as
\begin{align} \label{integraleq-0}
\hat{\Psi}(x,k) = &\; \hat{\Psi}_0(x,k) - \int_x^\infty \hat{E}(x,x',k) \hat{\Delta}(x', k) \hat{\Psi}(x', k) dx', \qquad x \geq 0, \ k \in \bar{D}_2^K,
\end{align}
where 
\begin{align}\label{E1E1E1def-0}
& \hat{E}(x,x',k) =  \hat{X}_p(x,k) \begin{pmatrix} e^{2i\theta_1(x' - x)} & 0 \\ 0 & 1 \end{pmatrix} \hat{X}_p(x',k)^{-1}.
\end{align}
Define $\hat{\Psi}_l$ for $l \geq 1$ inductively by
\begin{align*}
\hat{\Psi}_{l+1}(x,k) = - \int_x^\infty \hat{E}(x,x',k) \hat{\Delta}(x', k) \hat{\Psi}_l(x', k) dx', \qquad x \geq 0, \  k \in \bar{D}_2^K.
\end{align*}
Then
\begin{align}\label{Philiterated-0}
\hat{\Psi}_l(x,k) = & (-1)^l \int_{x = x_{l+1} \leq x_l \leq \cdots \leq x_1 < \infty} \prod_{i = 1}^l \hat{E}(x_{i+1}, x_i, k) \hat{\Delta}(x_i, k) \hat{\Psi}_0(x_1, k) dx_1 \cdots dx_l.
\end{align}
The estimates
\begin{align}\label{DeltaEboundb-0}
|\partial_k^j\hat{E}(x, x', k)| < C(1+|x' - x|)^j|k|^{-2j}, \qquad 0 \leq x \leq x' < \infty, \  k \in \bar{D}_2^K,
\end{align}
and (\ref{ABsmall-0}) with $j = 0$ give, for $x \geq 0$, $k \in \bar{D}_2^K$, and $l = 1,2, \dots$,
\begin{align}\nonumber
|\hat{\Psi}_l(x,k)| \leq \; & C^l \int_{x \leq x_l \leq \cdots \leq x_1 < \infty} \prod_{i = 1}^l  |\hat{\Delta}(x_i, k)| |\hat{\Psi}_0(x_1, k)| dx_1 \cdots dx_l
	\\ \nonumber
\leq & \; \frac{C^l}{l!}\|\hat{\Psi}_0(\cdot, k)\|_{L^\infty([x,\infty))} \|\hat{\Delta}(\cdot, k)\|_{L^1([x,\infty))}^l
	\\ \label{Philestimate-0}
\leq & \; \frac{C^l}{l!} \|\hat{\Psi}_0(\cdot, k)\|_{L^\infty([x,\infty))}\bigg(\frac{C|k|^{m+1}}{(1+x)^n}\bigg)^l.
\end{align}
This implies
$$|\hat{\Psi}_l(x,k)| \leq \frac{C^l}{l!}\bigg(\frac{C|k|^{m+1}}{(1+x)^n}\bigg)^l, \qquad x \geq 0, \  k \in \bar{D}_2^K, \ l = 1,2, \dots,$$
because, by Lemma \ref{lemma1-0},
\begin{align}\label{Psihat0bound-0}
|\hat{\Psi}_0(x, k)| < C, \qquad x \geq 0, \ k \in \bar{D}_2.
\end{align}
Hence the series $\sum_{l=0}^\infty \hat{\Psi}_l(x,k)$
converges absolutely and uniformly for $x \geq 0$ and $k \in \bar{D}_2^K$ to the solution $\hat{\Psi} = [\hat{X}]_2$ of (\ref{integraleq-0}).
Since
\begin{align}\label{Psiminusf-0}
|\hat{\Psi}(x,k) - \hat{\Psi}_0(x,k)| \leq \sum_{l=1}^\infty | \hat{\Psi}_l(x,k)| \leq \frac{C|k|^{m+1}}{(1+x)^n}, \qquad x \geq 0, \  k \in \bar{D}_2^K,
\end{align}
this proves (\ref{varphiasymptoticsa-0}) for $j = 0$. 
\proofendcontinue

\begin{lemma}\label{lemma4-0}
$[\hat{X}]_2$ satisfies (\ref{varphiasymptoticsa-0}) for $j = 1, \dots, n$.
\end{lemma}
\stepproofbegin
By (\ref{Psihat0bound-0}) and (\ref{Psiminusf-0}), we have $|\hat{\Psi}(x,k)| < C$ for $x \geq 0$ and $k \in \bar{D}_2^K$.
Let
\begin{align}\label{Lambda0-0}
\hat{\Lambda}_0(x,k) = \;& [\partial_k\hat{X}_p(x,k)]_2
- \int_x^\infty \frac{\partial}{\partial k}\big[\hat{E}(x,x',k) \hat{\Delta}(x', k)\big] \hat{\Psi}(x', k) dx'.
\end{align}
Differentiating the integral equation (\ref{integraleq-0}) with respect to $k$, we find that $\hat{\Lambda} := \partial_k\hat{\Psi}$ satisfies
\begin{align}\label{integraleq3-0}
\hat{\Lambda}(x,k) = \hat{\Lambda}_0(x,k)
- \int_x^\infty \hat{E}(x,x',k) \hat{\Delta}(x', k) \hat{\Lambda}(x', k) dx'
\end{align}
for each $k$ in the interior of $\bar{D}_2^K$.
%; the differentiation can be justified by dominated convergence using (\ref{DeltaEbounda-0}), (\ref{DeltaEboundb-0}), and a Cauchy estimate for $\partial_k\hat{\Psi}$.
We seek a solution of (\ref{integraleq3-0}) of the form $\hat{\Lambda} = \sum_{l=0}^\infty \hat{\Lambda}_l$,  where the $\hat{\Lambda}_l$ are defined by replacing $\{\hat{\Psi}_l\}$ by $\{\hat{\Lambda}_l\}$ in (\ref{Philiterated-0}). Proceeding as in (\ref{Philestimate-0}), we find
$$|\hat{\Lambda}_l(x,k)| \leq \frac{C^l}{l!}\|\hat{\Lambda}_0(\cdot, k)\|_{L^\infty([x,\infty))}\bigg(\frac{C|k|^{m+1}}{(1+x)^n}\bigg)^l, \qquad x \geq 0, \  k \in \bar{D}_2^K, \ l = 1,2, \dots.$$

Using (\ref{DeltaEbounda-0}) and (\ref{DeltaEboundb-0}) in (\ref{Lambda0-0}), we obtain
\begin{align}\label{FF1-0}
|\hat{\Lambda}_0(x,k) - [\partial_k\hat{X}_p(x,k)]_2 | \leq \frac{C|k|^{m-1}}{(1+x)^{n-1}}, \qquad x \geq 0, \  k \in \bar{D}_2^K.
\end{align}
In particular, $\|\hat{\Lambda}_0(\cdot, k)\|_{L^\infty([0,\infty))} < C(1 + |k|^{m-1})$ for $k \in \bar{D}_2^K$.
Thus, $\sum_{l=0}^\infty \hat{\Lambda}_l$ converges absolutely and uniformly on $[0,\infty) \times \bar{D}_2^K$ to a solution $\hat{\Lambda}$ of (\ref{integraleq3-0}), which satisfies the following analog of (\ref{Psiminusf-0}):
\begin{align}\label{FF2-0}
|\hat{\Lambda}(x,k) - \hat{\Lambda}_0(x,k)| \leq
\frac{C|k|^{m+1}}{(1+x)^{n}}, \qquad x \geq 0,  \  k \in \bar{D}_2^K.
\end{align}
Equations (\ref{FF1-0}) and (\ref{FF2-0}) show that $[\hat{X}]_2 = \hat{\Psi}$ satisfies (\ref{varphiasymptoticsa-0}) for $j = 1$.

Proceeding to higher values of $j$, we find that $\hat{\Lambda}^{(j)} := \partial_k^j\hat{\Psi}$ satisfies an integral equation of the form
\begin{align*}
\hat{\Lambda}^{(j)}(x,k) = \hat{\Lambda}_{0,j}(x,k)
- \int_x^\infty \hat{E}(x,x',k) \hat{\Delta}(x', k) \hat{\Lambda}^{(j)}(x', k) dx',\qquad j = 1, \dots, n,
\end{align*}
where
\begin{align}\label{hatLambda0j-0}
\big|\hat{\Lambda}_{0,j}(x, k) - [\partial_k^{j} \hat{X}_p(x, k)]_2 \big| \leq
\frac{C|k|^{m+1-2j}}{(1+x)^{n-j}}, \qquad x \geq 0, \  k \in \bar{D}_2^K.
\end{align}
In particular, $\|\hat{\Lambda}_{0,j}(\cdot, k)\|_{L^\infty([0,\infty))} < C(1 + |k|^{m+1-2j})$ for $k \in \bar{D}_2^K$ and $1 \leq j \leq n$. Consequently, the associated series $ \hat{\Lambda}^{(j)} = \sum_{l=0}^\infty \hat{\Lambda}^{(j)}_l$ converges absolutely and uniformly on $[0,\infty) \times \bar{D}_2^K$ to a solution satisfying
\begin{align}\nonumber
|\hat{\Lambda}^{(j)}(x,k) - \hat{\Lambda}_{0,j}(x,k)| & \leq \sum_{l=1}^\infty | \hat{\Lambda}_l^{(j)}(x,k)| 
\leq \|\hat{\Lambda}_{0,j}(\cdot, k)\|_{L^\infty([x,\infty))}\frac{C|k|^{m+1}}{(1+x)^n}
	\\ \label{hatLambdaj-0}
& \leq \frac{C|k|^{m+1-2j}}{(1+x)^n}, \qquad x \geq 0, \  k \in \bar{D}_2^K, \ j = 1, \dots, n.
\end{align}
Equations (\ref{hatLambda0j-0}) and (\ref{hatLambdaj-0}) show that $[\hat{X}]_2$ satisfies (\ref{varphiasymptoticsa-0}) for $j = 1, \dots, n$.
\proofendcontinue

The above lemmas prove the theorem for $\hat{X}$. We now consider $[\hat{Y}]_2$.

\begin{lemma}\label{lemma5-0}
$\{\hat{Z}_j(x), \hat{W}_j(x)\}_1^{m+1}$ are weakly differentiable functions of $x \geq 0$ satisfying
\begin{align}\label{ZjWj-0}
\begin{cases}
\hat{Z}_j, \hat{W}_j \in L^\infty([0,\infty)), \\
\hat{Z}_j', \hat{W}_j' \in L^1([0,\infty)),
\end{cases} \qquad j = 1, \dots, m+1.
\end{align}
\end{lemma}
\stepproofbegin
This is proved in the same way as Lemma \ref{lemma5}.
\proofendcontinue

We write
$$\hat{Y}_p(x,k) = \hat{Z}_p(x,k) + \hat{W}_p(x,k) e^{2i\theta_1 x\sigma_3},$$
where $\hat{Z}_p$ and $\hat{W}_p$ are defined by
$$\hat{Z}_p(x,k) = I + \hat{Z}_1(x)k + \cdots + \hat{Z}_{m+1}(x)k^{m+1}, \quad
\hat{W}_p(x,k) = \hat{W}_1(k)k + \cdots + \hat{W}_{m+1}(x)k^{m+1}.$$

\begin{lemma}\label{lemma6-0}
There exists a $K > 0$ such that $\hat{Z}_p(x,k)^{-1}$ and $\hat{W}_p(x,k)^{-1}$ exist for all $k \in \C$ with $|k|\leq K$.
Moreover, letting $\hat{A} :=-i\theta_1 \sigma_3 + \hat{\mathsf{U}} $ and
\begin{align*}
\begin{cases}
\hat{A}_{p,1}(x,k) := \big(\partial_x\hat{Z}_{p}(x,k) - i\theta_1 \hat{Z}_p(x,k) \sigma_3\big)\hat{Z}_p(x,k)^{-1},
	\\
\hat{A}_{p,2}(x,k) := \big(\partial_x\hat{W}_{p}(x,k) + i\theta_1 \hat{W}_p(x,k) \sigma_3\big)\hat{W}_p(x,k)^{-1},
\end{cases}
\quad x \geq 0, \  |k| \leq K,
\end{align*}
the differences
$$\hat{\Delta}_l(x,k) := \hat{A}(x,k) - \hat{A}_{p,l}(x,k), \qquad l = 1,2,$$
satisfy
\begin{align}\label{EEDeltaest1-0}
|\partial_k^j\hat{\Delta}_l(x,k)| \leq (C + f(x))|k|^{m+1-j}, \qquad x \geq 0, \  |k| \leq K, \  j = 0, 1, \dots, n, \  l = 1,2,
\end{align}
where $f$ is a function in $L^1([0,\infty))$.
In particular,
\begin{align}\label{EEDeltaest-0}
\|\partial_k^j \hat{\Delta}_l(\cdot, k)\|_{L^1([0,x])} \leq Cx|k|^{m+1-j}, \qquad x \geq 0, \  |k| \leq K, \  j = 0, 1, \dots, n, \ l = 1,2.
\end{align}
\end{lemma}
\stepproofbegin
The proof uses Lemma \ref{lemma5-0} and is similar to that of Lemma \ref{lemma2-0}.
\proofendcontinue

In the following, we let $K < 1$ be the constant from Lemma \ref{lemma6-0}.

\begin{lemma}\label{lemma7-0}
We have
\begin{subequations}
\begin{align}\label{ZYWa-0}
& \bigg| \frac{\partial^j}{\partial k^j}\Big[\hat{Z}_p(x,k) e^{-i\theta_1x\hat{\sigma}_3} \hat{Z}_p^{-1}(0,k) - \hat{Y}_p(x,k)\Big]_2 \bigg| \leq C(1+x)^j|k|^{m+2-2j}(1 + |e^{-2i\theta_1x}|),
	\\ \label{ZYWb-0}
& \bigg| \frac{\partial^j}{\partial k^j}\Big[\hat{W}_p(x,k) e^{i\theta_1x\hat{\sigma}_3} \hat{W}_p^{-1}(0,k) - \hat{Y}_p(x,k)e^{-2i\theta_1x\sigma_3}\Big]_2 \bigg| \leq C(1+x)^j|k|^{m+2-2j}(1 + |e^{2i\theta_1x}|),
\end{align}
for $x \geq 0$, $|k|\leq K$, and $j = 0, 1, \dots, n$.
\end{subequations}
\end{lemma}
\stepproofbegin
We write
$$\hat{Y}_{formal}(x,k) = \hat{Z}_{formal}(x,k) + \hat{W}_{formal}(x,k)e^{2i\theta_1x\sigma_3},$$
where
$$\hat{Z}_{formal}(x,k) = I + \hat{Z}_1(x)k + \hat{Z}_2(x)k^2 + \cdots, \qquad
\hat{W}_{formal}(x,k) = \hat{W}_1(x)k + \hat{W}_2(x)k^2 + \cdots.$$
Then
\begin{align}\label{ZZY-0}
\hat{Z}_{formal}(x,k) e^{-i\theta_1x\hat{\sigma}_3} \hat{Z}_{formal}^{-1}(0,k) = \hat{Y}_{formal}(x,k)
\end{align}
formally to all orders in $k$ as $k \to \infty$. Indeed, both sides of (\ref{ZZY-0}) are formal solutions of (\ref{xpartF-0}) satisfying the same initial condition at $x = 0$. Truncating (\ref{ZZY-0}) at order $k^{m+1}$, using Lemma \ref{lemma5-0}, and estimating the inverse $\hat{Z}_p^{-1}(0,k)$ as in the proof of Lemma \ref{lemma2-0}, it follows that
$$\hat{Z}_p(x,k) e^{-i\theta_1x\hat{\sigma}_3} \hat{Z}_p^{-1}(0,k) = \hat{Y}_p(x,k) + O(k^{m+2}) + O(k^{m+2})e^{2i\theta_1x\sigma_3}.$$
This gives (\ref{ZYWa-0}) for $j = 0$. Using the estimate $|\partial_k e^{\pm 2i\theta_1x}| \leq C x |k|^{-2} |e^{\pm 2i\theta_1x}|$ for $|k| \leq K$, we find (\ref{ZYWa-0}) also for $j \geq 1$.

Similarly, we have
$$\hat{W}_{formal}(x,k) e^{i\theta_1x\hat{\sigma}_3} \hat{W}_{formal}^{-1}(0,k) = \hat{Y}_{formal}(x,k)e^{-2i\theta_1x\sigma_3}$$
to all orders in $k$ and truncation leads to (\ref{ZYWb-0}).
\proofendcontinue

\begin{lemma}\label{lemma8-0}
$[\hat{Y}]_2$ satisfies (\ref{varphiasymptoticsb-0}).
\end{lemma}
\stepproofbegin
A computation similar to the one leading to (\ref{EEYVolterra}) shows that $\hat{Y}$ satisfies the Volterra integral equation
\begin{align}\label{EEYVolterra-0}
\hat{Y}(x,k) = \hat{Z}_p(x,k) e^{-i\theta_1x\hat{\sigma}_3}\hat{Z}_p^{-1}(0,k) + \int_0^x \hat{Z}_p(x,k) e^{-i\theta_1(x-x')\hat{\sigma}_3} (\hat{Z}_p^{-1}\hat{\Delta}_1 \hat{Y})(x',k) dx'.
\end{align}
Letting $\hat{\Psi} = [\hat{Y}]_2$ and $\hat{\Psi}_0(x,k) = [\hat{Z}_p(x,k) e^{-i\theta_1x\hat{\sigma}_3}\hat{Z}_p^{-1}(0,k) ]_2$, we can write the second column of (\ref{EEYVolterra-0}) as
\begin{align}\label{EEYVolterra2-0}
\hat{\Psi}(x,k) = \hat{\Psi}_0(x,k) + \int_0^x \hat{E}(x,x',k) (\hat{\Delta}_1 \hat{\Psi})(x',k) dx', \qquad x \geq 0, \ k \in \bar{D}_3^K,
\end{align}
where
$$\hat{E}(x,x',k) = \hat{Z}_p(x,k) \begin{pmatrix} e^{-2i\theta_1(x-x')} & 0 \\ 0 & 1 \end{pmatrix} \hat{Z}_p^{-1}(x',k).$$
We seek a solution $\hat{\Psi}(x,k) = \sum_{l=0}^\infty \hat{\Psi}_l(x,k)$ where
$$\hat{\Psi}_l(x,k) = \int_{0 \leq x_1 \leq \cdots \leq x_l \leq x_{l+1}=x < \infty} \prod_{i = 1}^l \hat{E}(x_{i+1}, x_i, k) \hat{\Delta}_1(x_i, k) \hat{\Psi}_0(x_1, k) dx_1 \cdots dx_l.$$
The estimates
\begin{align}\label{EEDeltaEboundb-0}
|\partial_k^j\hat{E}(x, x', k)| < \frac{C(1+|x' - x|)^j}{|k|^{2j}}, \qquad 0 \leq x' \leq x < \infty, \  k \in \bar{D}_3^K, \  j = 0, 1, \dots, n,
\end{align}
and
$$|\hat{\Psi}_0(x,k)| \leq C, \qquad x \geq 0, \  k \in \bar{D}_3^K,$$
together with  (\ref{EEDeltaest-0}) yield
\begin{align*}\nonumber
|\hat{\Psi}_l(x,k)| \leq \; & C^l \int_{0 \leq x_1 \leq \cdots \leq x_l \leq x < \infty} \prod_{i = 1}^l  |\hat{\Delta}_1(x_i, k)| |\hat{\Psi}_0(x_1, k)| dx_1 \cdots dx_l
	\\ \nonumber
\leq & \; \frac{C^l}{l!}\|\hat{\Psi}_0(\cdot, k)\|_{L^\infty([0,x])} \|\hat{\Delta}_1(\cdot, k)\|_{L^1([0,x])}^l
	\\
\leq &\; \frac{C^l}{l!} \big(Cx|k|^{m+1}\big)^l, \qquad x \geq 0, \  k \in \bar{D}_3^K, \ l = 1, 2, \dots.
\end{align*}
Hence
\begin{align}\label{EEPsiPsi0-0}
|\hat{\Psi}(x,k) - \hat{\Psi}_0(x,k)| \leq \sum_{l=1}^\infty | \hat{\Psi}_l(x,k)| \leq Cx |k|^{m+1} e^{Cx|k|^{m+1}}, \qquad x \geq 0, \  k \in \bar{D}_3^K.
\end{align}
Equations (\ref{ZYWa-0}) and (\ref{EEPsiPsi0-0}) prove the second column of (\ref{varphiasymptoticsb-0}) for $j = 0$.

Differentiating the integral equation (\ref{EEYVolterra2-0}) with respect to $k$, we find that $\hat{\Lambda} := \partial_k\hat{\Psi}$ satisfies
\begin{align}\label{EEintegraleq3-0}
\hat{\Lambda}(x,k) = \hat{\Lambda}_0(x,k)
+ \int_0^x \hat{E}(x,x',k) \hat{\Delta}_1(x', k) \hat{\Lambda}(x', k) dx'
\end{align}
for each $k$ in the interior of $\bar{D}_3^K$, where
\begin{align}\label{EELambda0-0}
\hat{\Lambda}_0(x,k) = \;& \partial_k\hat{\Psi}_0(x,k)
+ \int_0^x \frac{\partial}{\partial k}\big[\hat{E}(x,x',k) \hat{\Delta}_1(x', k)\big] \hat{\Psi}(x', k) dx'.
\end{align}
Seeking a solution of (\ref{EEintegraleq3-0}) of the form $\hat{\Lambda} = \sum_{l=0}^\infty \hat{\Lambda}_l$ and proceeding as above, we find
$$|\hat{\Lambda}_l(x,k)| \leq \frac{C^l}{l!}\|\hat{\Lambda}_0(\cdot, k)\|_{L^\infty([0, x])}\big(Cx|k|^{m+1}\big)^l, \qquad x \geq 0, \  k \in \bar{D}_3^K, \ l = 1, 2, \dots.$$
Using (\ref{Festb-0}), (\ref{EEDeltaest1-0}), and (\ref{EEDeltaEboundb-0}) in (\ref{EELambda0-0}), we obtain
\begin{align}\label{EEFF1-0}
\big|\hat{\Lambda}_0(x,k) - \partial_k\hat{\Psi}_0(x,k) \big| \leq C(1+x)^2|k|^{m-1}, \qquad x \geq 0, \  k \in \bar{D}_3^K.
\end{align}
On the other hand, Lemma \ref{lemma5-0} implies
$$\big| \partial_k^j [\hat{Y}_p(x,k)]_2\big| 
\leq C(1+x)^j|k|^{1-2j}, \qquad x \geq 0, \ k \in \bar{D}_3, \ j = 0, 1, \dots, n,$$
so, by (\ref{ZYWa-0}),
\begin{align*}
\big| \partial_k^j \hat{\Psi}_0(x,k)\big|
& \leq \big| \partial_k^j [\hat{Y}_p(x,k)]_2\big| 
+ C(1+x)^j|k|^{m+2-2j}(1 + |e^{-2i\theta_1x}|)
	\\
& \leq C(1+x)^j|k|^{1-2j}, \qquad x \geq 0, \ k \in \bar{D}_3, \ j = 0, 1, \dots, n.
\end{align*}
Thus (\ref{EEFF1-0}) gives
$$\|\hat{\Lambda}_0(\cdot, k)\|_{L^\infty([0, x])}
\leq C(1+x) |k|^{-1} + C(1+x)^2|k|^{m-1}, \qquad x \geq 0, \  k \in \bar{D}_3^K.$$
This shows that $\sum_{l=0}^\infty \hat{\Lambda}_l$ converges absolutely and uniformly on compact subsets of $[0,\infty) \times \bar{D}_3^K$ to the solution $\hat{\Lambda}$ of (\ref{EEintegraleq3-0}) and that
\begin{align}\nonumber
|\hat{\Lambda}(x,k) - \hat{\Lambda}_0(x,k)|
& \leq C \|\hat{\Lambda}_0(\cdot, k)\|_{L^\infty([0, x])}x|k|^{m+1}e^{Cx|k|^{m+1}}
	\\ \label{EEFF2-0}
& \leq C(1+x)^2x|k|^{m}e^{Cx|k|^{m+1}}, \qquad x \geq 0,  \  k \in \bar{D}_3^K.
\end{align}
Equations (\ref{ZYWa-0}), (\ref{EEFF1-0}), and (\ref{EEFF2-0}) show that $[\hat{Y}]_2 = \hat{\Psi}$ satisfies (\ref{varphiasymptoticsb-0}) for $j = 1$.
Extending the above argument, we find that (\ref{varphiasymptoticsb-0}) holds also for $j = 2, \dots, n$.
\proofendcontinue

\begin{lemma} \label{lemma9-0}
$[\hat{Y}]_2$ satisfies (\ref{varphiasymptoticsc-0}).
\end{lemma}
\stepproofbegin
Let $\hat{y}(x,k) = \hat{Y}(x,k) e^{-2i\theta_1 x\sigma_3}$. A computation similar to (\ref{Wpinvyx}) shows that $\hat{y}$ satisfies the Volterra integral equation
\begin{align}\label{yWhatVolterra-0}
\hat{y}(x,k) = \hat{W}_p(x,k) e^{i\theta_1x\hat{\sigma}_3}\hat{W}_p^{-1}(0,k) + \int_0^x \hat{W}_p(x,k) e^{i\theta_1(x-x')\hat{\sigma}_3} (\hat{W}_p^{-1}\hat{\Delta}_2 \hat{y})(x',k) dx'.
\end{align}
Letting $\hat{\Psi} = [\hat{y}]_2$ and $\hat{\Psi}_0(x,k) = [\hat{W}_p(x,k) e^{i\theta_1x\hat{\sigma}_3}\hat{W}_p^{-1}(0,k) ]_2$, we can write the second column of (\ref{yWhatVolterra-0}) as
$$\hat{\Psi}(x,k) = \hat{\Psi}_0(x,k) + \int_0^x \hat{E}(x,x',k) (\hat{\Delta}_2 \hat{\Psi})(x',k) dx',$$
where
$$\hat{E}(x,x',k) = \hat{W}_p(x,k) \begin{pmatrix} e^{2i\theta_1(x-x')} & 0 \\ 0 & 1 \end{pmatrix} \hat{W}_p^{-1}(x',k).$$
As in the proof of Lemma \ref{lemma8-0}, the estimates
$$|\partial_k^j\hat{E}(x, x', k)| < C(1+|x' - x|)^j|k|^{-2j}, \qquad 0 \leq x' \leq x < \infty, \  k \in \bar{D}_2^K, \  j = 0, 1, \dots, n,$$
and
$$|\hat{\Psi}_0(x,k)| \leq C, \qquad x \geq 0, \  k \in \bar{D}_2^K,$$
together with  (\ref{EEDeltaest-0}) yield
\begin{align}\label{HHPsiPsi0-0}
|\hat{\Psi}(x,k) - \hat{\Psi}_0(x,k)| \leq Cx |k|^{m+1} e^{Cx|k|^{m+1}}, \qquad x \geq 0, \  k \in \bar{D}_2^K.
\end{align}
Equations (\ref{ZYWb-0}) and (\ref{HHPsiPsi0-0}) prove the second column of (\ref{varphiasymptoticsc-0}) for $j = 0$. Proceeding as in the proof of Lemma \ref{lemma8-0}, equation (\ref{varphiasymptoticsc-0}) follows also for $j = 1, \dots, n$.
This completes the proof of the theorem.
\end{proof}

\begin{corollary}[Asymptotics of $X$ and $Y$ as $k \to 0$]\label{xcor}
Let $u_0(x)$ and $u_1(x)$ be functions satisfying (\ref{ujassump}) for some given integers $m \geq 1$, $n \geq 1$, and $N_x \in \Z$.

As $k \to 0$, $X$ and $Y$ coincide to order $m$ with $G_0\hat{X}_{formal}$ and $G_0\hat{Y}_{formal} e^{-i\theta_1 x \hat{\sigma}_3}G_0^{-1}(0)$ respectively in the following sense: 
For each $j = 0, 1, \dots, n$,
\begin{subequations}
\begin{align}\label{asymptoticszeroa}
& \bigg|\frac{\partial^j}{\partial k^j}\big(X - G_0\hat{X}_p\big) \bigg| \leq
\frac{C|k|^{m+1-2j}}{(1+x)^{n-j}}, \qquad x \geq 0, \  k \in (\bar{D}_3, \bar{D}_2),
	\\ \nonumber
& \bigg|\frac{\partial^j}{\partial k^j}\big(Y - G_0\hat{Y}_p e^{-i\theta_1 x \hat{\sigma}_3}G_0^{-1}(0)\big) \bigg| \leq C(1 + x)^{j+2}|k|^{m+1-2j}e^{Cx|k|^{m+1}}, 
	\\ \label{asymptoticszerob}
&\hspace{9cm} x \geq 0, \  k \in (\bar{D}_2, \bar{D}_3),
	\\ \nonumber
& \bigg|\frac{\partial^j}{\partial k^j}\Big(\big(Y - G_0\hat{Y}_p e^{-i\theta_1 x \hat{\sigma}_3}G_0^{-1}(0)\big)e^{-2i\theta_1 x\sigma_3}\Big) \bigg| \leq
C(1 + x)^{j+2} |k|^{m+1-2j} e^{Cx|k|^{m+1}}, 
	\\ \label{asymptoticszeroc}
&\hspace{9cm} x \geq 0, \  k \in (\bar{D}_3, \bar{D}_2),
\end{align}
\end{subequations}
where $G_0(x)$ is defined by (\ref{G0xdef}).
In particular, for each $x \geq 0$, it holds that $X(x,\cdot)$ is bounded for $k \in (\bar{\C}_-, \bar{\C}_+)$; $Y(x,\cdot)$ is bounded for $k \in (\bar{\C}_+, \bar{\C}_-)$; and $Y(x,\cdot)e^{-2i\theta_1 x\sigma_3}$ is bounded for $k \in (\bar{\C}_-, \bar{\C}_+)$.
\end{corollary}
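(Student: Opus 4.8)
The plan is to obtain Corollary~\ref{xcor} directly from Theorem~\ref{xth3} by means of the conjugation identities (\ref{XXhatYYhat}), $X = G_0\hat{X}$ and $Y = G_0\hat{Y}e^{-i\theta_1x\hat{\sigma}_3}G_0^{-1}(0)$, together with the elementary facts that $G_0(x)$ and $G_0^{-1}(0)$ are independent of $k$ and that $|G_0(x)| = |G_0^{-1}(x)| = \sqrt{2}$ for all $x \geq 0$. The key structural point that keeps the argument clean, despite the singularity of $\partial_k\theta_1$ at $k=0$, is that one never has to differentiate an exponential on its own: in each reduction the factors $e^{\pm 2i\theta_1x}$ are already bundled inside the twisted differences $\hat{Y}-\hat{Y}_p$ and $(\hat{Y}-\hat{Y}_p)e^{-2i\theta_1x\sigma_3}$ whose $k$-derivatives are controlled by Theorem~\ref{xth3}.

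The estimate (\ref{asymptoticszeroa}) is immediate: since $X - G_0\hat{X}_p = G_0(\hat{X} - \hat{X}_p)$ and $G_0$ does not depend on $k$, we have $\partial_k^j(X - G_0\hat{X}_p) = G_0\,\partial_k^j(\hat{X} - \hat{X}_p)$, so the bound follows from (\ref{varphiasymptoticsa-0}) and $|G_0| = \sqrt{2}$; no interchange of columns or domains is involved here.

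For the $Y$-estimates I would write
$$Y - G_0\hat{Y}_pe^{-i\theta_1x\hat{\sigma}_3}G_0^{-1}(0) = G_0\,(\hat{Y} - \hat{Y}_p)\,e^{-i\theta_1x\hat{\sigma}_3}G_0^{-1}(0)$$
and split $G_0^{-1}(0) = g^{(d)} + g^{(o)}$ into its diagonal and off-diagonal parts. Since $e^{-i\theta_1x\hat{\sigma}_3}$ fixes diagonal matrices, the $g^{(d)}$-contribution is $G_0(\hat{Y} - \hat{Y}_p)g^{(d)}$, whose columns are $k$-independent bounded multiples of those of $\hat{Y} - \hat{Y}_p$. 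For $g^{(o)}$, the matrix $e^{-i\theta_1x\hat{\sigma}_3}(g^{(o)})$ has entries equal to $e^{\mp 2i\theta_1x}$ times the constant entries of $g^{(o)}$, and a direct multiplication shows that the columns of $(\hat{Y} - \hat{Y}_p)\,e^{-i\theta_1x\hat{\sigma}_3}(g^{(o)})$ are, up to constant factors and an interchange of the two columns, the columns of $(\hat{Y} - \hat{Y}_p)e^{-2i\theta_1x\sigma_3}$. Hence $\partial_k^j$ of the left-hand side above is, columnwise, a fixed bounded $k$-independent linear combination of $\partial_k^j(\hat{Y} - \hat{Y}_p)$ and $\partial_k^j\big((\hat{Y} - \hat{Y}_p)e^{-2i\theta_1x\sigma_3}\big)$, controlled by (\ref{varphiasymptoticsb-0}) and (\ref{varphiasymptoticsc-0}). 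The point that requires a moment of care is that, because $g^{(o)}$ swaps the columns, one must check that the column of the twisted difference that occurs is controlled on exactly the piece of the domain $(\bar{D}_3,\bar{D}_2)$ in (\ref{varphiasymptoticsc-0}) matching the piece of $(\bar{D}_2,\bar{D}_3)$ needed in (\ref{asymptoticszerob}); a short inspection confirms this. For (\ref{asymptoticszeroc}) I would multiply the above identity by $e^{-2i\theta_1x\sigma_3}$ on the right and use the identities $\big(e^{-i\theta_1x\hat{\sigma}_3}(g^{(o)})\big)e^{-2i\theta_1x\sigma_3} = g^{(o)}$ (the exponentials cancel) and $g^{(d)}e^{-2i\theta_1x\sigma_3} = e^{-2i\theta_1x\sigma_3}g^{(d)}$, which again reduce matters to (\ref{varphiasymptoticsb-0}) and (\ref{varphiasymptoticsc-0}).

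Finally, for the boundedness assertions at the end of the corollary: near $k = 0$ they follow from the $j = 0$ cases of (\ref{asymptoticszeroa})--(\ref{asymptoticszeroc}) --- the differences tend to $0$, while $G_0\hat{X}_p$ and $G_0\hat{Y}_pe^{-i\theta_1x\hat{\sigma}_3}G_0^{-1}(0)$ stay bounded as $k \to 0$ on the indicated half-planes because $\hat{X}_p = I + O(k)$, $\hat{Y}_p = I + O(k) + O(k)e^{2i\theta_1x\sigma_3}$, and the exponentials $e^{\pm 2i\theta_1x}$ that survive remain bounded there (recall $\im\theta_1 \gtrless 0$ on $\bar{D}_2$, $\bar{D}_3$). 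On a compact subset of $\C\setminus\{0\}$ boundedness is clear from the continuity/analyticity in Theorem~\ref{xth1}$(c)$--$(d)$, and for $|k|$ large from Theorem~\ref{xth1}$(f)$ together with the symmetries in Theorem~\ref{xth1}$(h)$. I expect the column-and-domain bookkeeping in the $Y$-estimates to be the only mildly delicate part; the rest is mechanical.
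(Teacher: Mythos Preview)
Your proposal is correct and follows essentially the same approach as the paper. The only difference is cosmetic: where the paper writes out the second column of $(\hat{Y}-\hat{Y}_p)e^{-i\theta_1 x\hat{\sigma}_3}G_0^{-1}(0)$ explicitly in terms of $\sin\frac{u_0(0)}{2}$ and $\cos\frac{u_0(0)}{2}$, you package the same computation as the diagonal/off-diagonal splitting $G_0^{-1}(0)=g^{(d)}+g^{(o)}$; your observation that the $g^{(o)}$-part swaps columns and produces exactly the columns of $(\hat{Y}-\hat{Y}_p)e^{-2i\theta_1 x\sigma_3}$ is precisely what makes the domain bookkeeping between $(\bar{D}_2,\bar{D}_3)$ and $(\bar{D}_3,\bar{D}_2)$ line up, and the paper's explicit formula does the same thing. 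The mention of the symmetries in Theorem~\ref{xth1}$(h)$ for the large-$|k|$ boundedness is harmless but unnecessary---the estimates in Theorem~\ref{xth1}$(f)$ already cover each column on the required domain directly.
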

\begin{proof}
The estimate (\ref{varphiasymptoticsa-0}) yields, for $j = 0, 1,\dots, n$,
$$\bigg|\frac{\partial^j}{\partial k^j}\big(X - G_0\hat{X}_p\big)\bigg| = \bigg|G_0 \frac{\partial^j}{\partial k^j}(\hat{X} - \hat{X}_p)\bigg| 
\leq \frac{C|k|^{m+1-2j}}{(1+x)^{n-j}}, \qquad x \geq 0, \  k \in (\bar{D}_3, \bar{D}_2),$$
which proves (\ref{asymptoticszeroa}).
On the other hand, using that $Y = G_0(x) \hat{Y} e^{-i\theta_1 x \hat{\sigma}_3}G_0^{-1}(0)$, we find
\begin{align*}
&[Y - G_0\hat{Y}_p e^{-i\theta_1 x \hat{\sigma}_3}G_0^{-1}(0)]_2
 = G_0[(\hat{Y} - \hat{Y}_p)e^{-i\theta_1 x \hat{\sigma}_3}G_0^{-1}(0)]_2
	\\
& = (-1)^{N_x}G_0[\hat{Y} - \hat{Y}_p]_1 e^{-2i\theta_1 x}  \sin\frac{u_0(0)}{2} + (-1)^{N_x}G_0[\hat{Y} - \hat{Y}_p]_2 \cos \frac{u_0(0)}{2}.
\end{align*}
Using (\ref{varphiasymptoticsc-0}) and (\ref{varphiasymptoticsb-0}) to estimate the first and second terms on the right-hand side, respectively, we find
$$\bigg|\frac{\partial^j}{\partial k^j}[Y - G_0\hat{Y}_p e^{-i\theta_1 x \hat{\sigma}_3}G_0^{-1}(0)]_2\bigg|
\leq C(1 + x)^{j+2} |k|^{m+1-2j} e^{Cx|k|^{m+1}}, \qquad x \geq 0, \  k \in \bar{D}_3,$$
for $j = 0, 1, \dots, n$. This establishes the second column of (\ref{asymptoticszerob}). The first column of (\ref{asymptoticszerob}) and the two columns of (\ref{asymptoticszeroc}) are established in a similar way. 

We have $[G_0(x) \hat{X}_p]_2 \leq C[\hat{X}_p]_2 \leq C$ for $k \in \bar{D}_2$; hence (\ref{asymptoticszeroa}) shows that $[X]_2$ is bounded for $k \in \bar{D}_2$ (this follows also from (\ref{Festa-0})). In view of (\ref{Festa}), $[X]_2$ is then bounded for all $k \in \bar{\C}_+$.
On the other hand, since  $\hat{Y}_p = \hat{Z}_p + \hat{W}_p e^{2i\theta_1 x \sigma_3}$, we find
\begin{align}\nonumber
(-1)^{N_x} [G_0\hat{Y}_p e^{-i\theta_1 x \hat{\sigma}_3}G_0^{-1}(0)]_2
= &\; G_0[\hat{Y}_p]_1 e^{-2i\theta_1 x}  \sin\frac{u_0(0)}{2} + G_0[\hat{Y}_p]_2 \cos \frac{u_0(0)}{2}
	\\\nonumber
= &\; G_0\bigg([\hat{Z}_p]_1  \sin\frac{u_0(0)}{2} + [\hat{W}_p]_2 \cos \frac{u_0(0)}{2}\bigg)e^{-2i\theta_1 x} 
	\\\label{G0hatYpe}
& + G_0\bigg([\hat{W}_p]_1 \sin\frac{u_0(0)}{2} + [\hat{Z}_p]_2 \cos \frac{u_0(0)}{2}\bigg).
\end{align}
The functions $e^{-2i\theta_1 x}$, $\hat{Z}_p$, and $\hat{W}_p$ are bounded for $x \geq 0$ and $k \in \bar{D}_3$ (see Lemma \ref{lemma5-0}). Hence equations (\ref{asymptoticszerob}) and (\ref{G0hatYpe}) imply that the column $[Y]_2$ is bounded for $x \geq 0$ and $k \in \bar{D}_3$; by (\ref{Festb}) it is then bounded for all $k \in \bar{\C}_-$. The boundedness of $[Y]_1$ and the two columns of $Ye^{-2i\theta_1 x\sigma_3}$ can be established in a similar way. 
\end{proof}

\section{Spectral analysis of the $t$-part}\label{tpartapp}
Consider the $t$-part of the Lax pair (\ref{lax}) evaluated at $x = 0$:
\begin{align}\label{tpartF}
  T_t + i\theta_2 [\sigma_3, T] = \mathsf{V} T,
\end{align}
where $\theta_2 := \theta_2(k) = \frac{1}{4}(k + \frac{1}{k})$ and $\mathsf{V}(t,k)$ is given by (\ref{mathsfVdef}), i.e.,
\begin{align*}
  \mathsf{V}(t,k) = \mathsf{V}_0(t)+\frac{1}{k} \mathsf{V}_1(t),
\end{align*} 
with 
$$\mathsf{V}_0(t) = -\frac{i(g_1(t) + g_{0t}(t))}{4}\sigma_2,\qquad 
  \mathsf{V}_1(t) = -\frac{i\sin g_0(t)}{4} \sigma_1 - \frac{i(\cos{g_0(t)} -1)}{4}\sigma_3.$$
We define two $2 \times 2$-matrix valued solutions $T(t,k)$ and $U(t,k)$ of (\ref{tpartF}) as the solutions of the linear Volterra integral equations
\begin{subequations}\label{TUdef}
\begin{align}\label{TUdefa}
 & T(t,k) = I + \int_{\infty}^t e^{i\theta_2(t'-t)\hat{\sigma}_3} (\mathsf{V}T)(t',k) dt',
  	\\
&  U(t,k) = I + \int_{0}^t e^{i\theta_2(t'-t)\hat{\sigma}_3} (\mathsf{V}U)(t',k) dt'.
\end{align}
\end{subequations}
Let $D_+ = D_1 \cup D_3$ and $D_- = D_2 \cup D_4$. 

\begin{theorem}[Basic properties of $T$ and $U$]\label{tth1}
Let $n \geq 1$ and $N_t \in \Z$ be integers. Suppose $g_0, g_1:[0,\infty)\to \R$ satisfy
\begin{align*}
  (1+t)^n(|g_0 - 2\pi N_t| + |g_{0t}|+|g_1|)\in  L^1([0,\infty)).
\end{align*}
Then the equations (\ref{TUdef}) uniquely define two $2 \times 2$-matrix valued solutions $T$ and $U$ of (\ref{tpartF}) with the following properties:
\begin{enumerate}[$(a)$]
\item The function $T(t, k)$ is defined for $t \geq 0$ and $k \in (\bar{D}_-, \bar{D}_+) \setminus \{0\}$. For each $k \in (\bar{D}_-, \bar{D}_+)\setminus \{0\}$, $T(\cdot, k)$ is weakly differentiable and satisfies (\ref{tpartF}).

\item The function $U(t, k)$ is defined for $t \geq 0$ and $k \in \C\setminus \{0\}$. For each $k \in \C\setminus \{0\}$, $U(\cdot, k)$ is weakly differentiable and satisfies (\ref{tpartF}).

\item For each $t \geq 0$, the function $T(t,\cdot)$ is continuous for $k \in (\bar{D}_-, \bar{D}_+)\setminus \{0\}$ and analytic for $k \in (D_-, D_+)$.

\item For each $t \geq 0$, the function $U(t,\cdot)$ is analytic for $k \in \C \setminus \{0\}$. 

\item For each $t \geq 0$ and each $j = 1, \dots, n$, the partial derivative $\frac{\partial^j T}{\partial k^j}(t, \cdot)$ has a continuous extension to $(\bar{D}_-, \bar{D}_+)\setminus \{0\}$.

\item $T$ and $U$ satisfy the following estimates:
\begin{subequations}\label{Fest-t}
\begin{align}\label{Festa-t}
& \bigg|\frac{\partial^j}{\partial k^j}\big(T(t,k) - I\big) \bigg| \leq
\frac{C}{(1+t)^{n-j}}, \qquad k \in (\bar{D}_4, \bar{D}_1),
	\\\label{Festb-t}
& \bigg|\frac{\partial^j}{\partial k^j}\Big(U(t,k) - I\Big) \bigg| \leq
C(1 + t)^j, \qquad k \in (\bar{D}_1, \bar{D}_4), 
	\\\label{Festc-t}
& \bigg|\frac{\partial^j}{\partial k^j}\Big((U(t,k) - I)e^{-2i\theta_2t\sigma_3}\Big) \bigg| \leq
C(1 + t)^j, \qquad k \in (\bar{D}_4, \bar{D}_1), 
\end{align}
\end{subequations}
for $t \geq 0$ and $j = 0, 1, \dots, n$.

\item $\det T(t,k) = 1$ for $t \geq 0$, $k \in (\bar{D}_+ \cap \bar{D}_-)\setminus \{0\}$, and $\det U(t,k) = 1$ for $t \geq 0$, $k \in \C \setminus \{0\}$.

\item $T$ and $U$ obey the following symmetries for each $t \geq 0$:
\begin{align*}
& T(t,k) = \sigma_2 T(t,-k) \sigma_2 = \sigma_2 \overline{T(t,\bar{k})} \sigma_2, \qquad k \in (\bar{D}_-, \bar{D}_+) \setminus \{0\},
	\\
&U(t,k) = \sigma_2 U(t,-k) \sigma_2 = \sigma_2 \overline{U(t,\bar{k})} \sigma_2, \qquad k \in \C \setminus \{0\}.
\end{align*}

\end{enumerate}
\end{theorem}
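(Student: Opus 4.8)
The plan is to mirror the proof of Theorem \ref{xth1} almost verbatim, replacing $x$ by $t$, $\theta_1$ by $\theta_2$, $\mathsf{U}$ by $\mathsf{V}$, $N_x$ by $N_t$, and $(u_0,u_1)$ by $(g_0,g_1)$ throughout. As there, I would first reduce the unit-determinant statements in $(g)$ to $(a)$--$(f)$: since $\tr\mathsf{V}=0$ one gets $(\det T)_t=(\det U)_t=0$, and evaluating at $t=\infty$ (on the common domain $(\bar{D}_+\cap\bar{D}_-)\setminus\{0\}$ of the two columns of $T$, where $T\to I$) and at $t=0$ yields $(g)$; the symmetries in $(h)$ follow from the corresponding symmetries of $\mathsf{V}$ together with the uniqueness of the Volterra solutions. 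The substance of the argument is then the construction and estimation of the columns $[T]_{1,2}$ and $[U]_{1,2}$ via Picard iteration of (\ref{TUdef}), exactly as for $[X]_2$ and $[Y]_2$.

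Concretely, for small $K>0$ I would set $\bar{D}_\pm^K:=\bar{D}_\pm\cap\{|k|\ge K\}$; on $\{|k|\ge K\}$ the potential satisfies $(1+t)^n\mathsf{V}(\cdot,k)\in L^1([0,\infty))$. Writing the second column of (\ref{TUdefa}) as a Volterra equation with kernel built from $E(t,t',k)=\mathrm{diag}(e^{2i\theta_2(t'-t)},1)$ and iterating, the iterates obey the factorial bound $|\Psi_l(t,k)|\le (C^l/l!)\,\|\mathsf{V}(\cdot,k)\|_{L^1([t,\infty))}^l$ (cf. (\ref{xPhilestimate})), so the Neumann series converges absolutely and uniformly on $[0,\infty)\times\bar{D}_+^K$, giving weak differentiability in $t$ via $W^{1,1}_{\mathrm{loc}}$, analyticity in the interior of $\bar{D}_+^K$, continuity up to its boundary, and (\ref{Festa-t}) for $j=0$. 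For $j=1,\dots,n$, each $\partial_k^j\Psi$ solves a Volterra equation of the same type whose inhomogeneous term decays like $(1+t)^{-(n-j)}$ --- each $k$-derivative of $E$ costs only a factor $(1+|t'-t|)$, absorbed by the available decay --- so the same iteration gives $(e)$ and the rest of (\ref{Festa-t}); letting $K\to 0^+$ completes $[T]_2$, and $[T]_1$ is handled identically on $\bar{D}_-^K$. For $U$ the integral runs from $0$ to $t$, so the exponentials are bounded only on compact subsets of $[0,\infty)\times(\C\setminus\{0\})$ --- this gives analyticity of $U(t,\cdot)$ on $\C\setminus\{0\}$ --- while the bounds (\ref{Festb-t}) follow from $|E|\le C$ on the appropriate $\bar{D}_\mp^K$, the $(1+t)^j$ growth arising, as in the proof of (\ref{Festb}), from $k$-differentiating $E$. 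Finally (\ref{Festc-t}) comes from running the same scheme on $y(t,k):=U(t,k)e^{-2i\theta_2 t\sigma_3}$, which satisfies a conjugated Volterra equation with $\sigma_3$ in place of $\hat\sigma_3$.

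The one genuinely new point --- and the main thing I expect to need care with --- is the geometry of the domains. For $\theta_1=\frac14(k-1/k)$ one has $\im\theta_1=\frac14(\im k)(1+|k|^{-2})$, so $\im\theta_1$ has the sign of $\im k$ and the relevant regions are the half-planes $\bar{\C}_\pm$; but for $\theta_2=\frac14(k+1/k)$ one has $\im\theta_2=\frac14(\im k)(1-|k|^{-2})$, which changes sign across the unit circle, so $|e^{2i\theta_2(t'-t)}|\le 1$ for $t'\ge t$ holds precisely on the non-convex set $\bar{D}_+=\bar{D}_1\cup\bar{D}_3$, and $|e^{-2i\theta_2(t'-t)}|\le 1$ on $\bar{D}_-=\bar{D}_2\cup\bar{D}_4$. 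This leaves the iteration itself untouched (all one uses is the pointwise bound $|E|\le C$ on the relevant region), but it is exactly why $T(t,\cdot)$ is defined and continuous only on $(\bar{D}_-,\bar{D}_+)\setminus\{0\}$, why it is analytic only on $(D_-,D_+)$, and why the unit-determinant identity in $(g)$ is asserted only on $(\bar{D}_+\cap\bar{D}_-)\setminus\{0\}$, the common domain of the two columns. Keeping this bookkeeping straight, together with the exclusion of the singular point $k=0$ of $\mathsf{V}$ (handled by working on $\{|k|\ge K\}$ and then letting $K\to 0^+$), is essentially the only subtlety beyond transcribing the $x$-part argument.
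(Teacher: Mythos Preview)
Your proposal is correct and is precisely the approach taken in the paper, which simply states that the proof is similar to that of Theorem~\ref{xth1} and omits the details. You have correctly identified the only substantive modification, namely that $\im\theta_2=\tfrac14(\im k)(1-|k|^{-2})$ changes sign across the unit circle, which is exactly what forces the domains $(\bar D_-,\bar D_+)$ in place of $(\bar\C_-,\bar\C_+)$ and accounts for the restricted determinant statement in~$(g)$.
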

\begin{proof}
The proof is similar to that of Theorem \ref{xth1} and will be omitted.
\end{proof}

\subsection{Asymptotics as $k \to \infty$}
We next consider the behavior of the eigenfunctions $T$ and $U$ as $k \to \infty$. Equation (\ref{tpartF}) admits formal power series solutions $T_{formal}$ and $U_{formal}$, normalized at $t = \infty$ and $t = 0$ respectively, such that
\begin{align*}
& T_{formal}(t,k) = I + \frac{T_1(t)}{k} + \frac{T_2(t)}{k^2} + \cdots,
	\\
& U_{formal}(t,k) =  I + \frac{Z_1(t)}{k} + \frac{Z_2(t)}{k^2} + \cdots + \bigg(\frac{W_1(t)}{k} + \frac{W_2(t)}{k^2} + \cdots \bigg) e^{2i\theta_2t\sigma_3},
\end{align*}
where the coefficients $\{T_j(t), Z_j(t), W_j(t)\}_1^\infty$ satisfy
\begin{align}\label{Fjnormalization-t}
\lim_{t\to \infty} T_j(t) = 0, \qquad Z_j(0) + W_j(0) = 0, \qquad j \geq 1.
\end{align}
Indeed, substituting
$$T = I + \frac{T_1(t)}{k} + \frac{T_2(t)}{k^2} + \cdots$$
into (\ref{tpartF}), the off-diagonal terms of $O(k^{-j})$ and the diagonal terms of $O(k^{-j-1})$ yield the relations
\begin{align}\label{trecursive}
\begin{cases}
T_{j+1}^{(o)} = -2 i \sigma_3\big(-\partial_tT_{j}^{(o)}-\frac{i}{2}\sigma_3 T_{j-1}^{(o)}+\mathsf{V}_0 T_j^{(d)}+\mathsf{V}_1^{(o)} T_{j-1}^{(d)}+\mathsf{V}_1^{(d)} T_{j-1}^{(o)}\big),
	\\
\partial_t T_{j+1}^{(d)} = \mathsf{V}_0 T_{j+1}^{(o)}+\mathsf{V}_1^{(o)} T_j^{(o)}+\mathsf{V}_1^{(d)} T_j^{(d)}.
\end{cases}
\end{align}
The coefficients $\{Z_j\}$ satisfy the equations obtained by replacing $\{T_j\}$ with $\{Z_j\}$ in \eqref{trecursive}.
Similarly, substituting
$$T = \bigg(\frac{W_1(t)}{k} + \frac{W_2(t)}{k^2} + \cdots\bigg) e^{2i\theta_2t\sigma_3}$$
into (\ref{tpartF}), the diagonal terms of $O(k^{-j})$ and the off-diagonal terms of $O(k^{-j-1})$ yield the relations
\begin{align}\label{trecursive2}
\begin{cases}
W_{j+1}^{(d)} = -2 i \sigma_3\big(-\partial_tW_{j}^{(d)}-\frac{i}{2}\sigma_3 W_{j-1}^{(d)}+\mathsf{V}_0 W_j^{(o)}+\mathsf{V}_1^{(o)} W_{j-1}^{(o)}+\mathsf{V}_1^{(d)} W_{j-1}^{(d)}\big),
	\\
\partial_t W_{j+1}^{(o)} = \mathsf{V}_0 W_{j+1}^{(d)}+\mathsf{V}_1^{(o)} W_j^{(d)}+\mathsf{V}_1^{(d)} W_j^{(o)}.
\end{cases}
\end{align}
The coefficients $\{T_j(t), Z_j(t), W_j(t)\}$ are determined recursively from (\ref{Fjnormalization-t})-(\ref{trecursive2}), the equations obtained by replacing $\{T_j\}$ with $\{Z_j\}$ in (\ref{trecursive}), and the initial assignments
$$T_{-1} = 0, \qquad T_0 = I, \qquad Z_{-1} = 0, \qquad Z_0 = I, \qquad W_{-1}=W_0 = 0.$$
The first few coefficients are given by
\begin{align*}\nonumber
T_1(t) = &\; \frac {i(g_1+g_{0t})}{2} \sigma_1- \sigma_3 \frac{i}{4} \int_{\infty}^{t}\bigg[\frac{(g_1+g_{0t})^2}{2}+\cos g_0-1 \bigg]dt',
	\\ \nonumber
T_2(t) = &\; i \bigg[-g_{0tt}-g_{1t}+\frac{i}{2}(g_1+g_{0t})(T_1)_{22} - \frac{1}{2} \sin g_0\bigg]\sigma_2
	\\ \nonumber
 &\;+ I\int_{\infty}^t \bigg\{\frac{i}{4}\Big[\frac{(g_{1} + g_{0t})^2}{2} + \cos{g_0} - 1\Big](T_1)_{22}
 - \frac{1}{4}(g_{1} + g_{0t})(g_{1t} + g_{0tt})\bigg\} dt',
	\\ 
Z_1(t) = &\; \frac {i(g_1+g_{0t})}{2} \sigma_1+\sigma_3 \frac{i}{4} \int_{0}^{t}\bigg[\frac{(g_1+g_{0t})^2}{2}+\cos g_0-1 \bigg]dt',
	\\ \nonumber
Z_2(t) = &\; i \bigg[-g_{0tt}-g_{1t}+\frac{i}{2}(g_1+g_{0t})(Z_1)_{22} - \frac{1}{2} \sin g_0\bigg]\sigma_2\\\nonumber
 &\;+ I\bigg\{\int_{0}^t \bigg[\frac{i}{4}\Big[\frac{(g_{1} + g_{0t})^2}{2} + \cos{g_0} - 1\Big](Z_1)_{22}
 - \frac{1}{4}(g_{1} + g_{0t})(g_{1t} + g_{0tt})\bigg] dt'
 	\\
& -\frac{1}{4} (g_1(0)+g_{0t}(0))^2\bigg\},
\end{align*}
and
\begin{align*}
W_1(t) = & -\frac{i(g_1(0)+g_{0t}(0))}{2}\sigma_1,
	\\ \nonumber
W_2(t) = &\; \frac{i(g_1+g_{0t})}{2} (W_1)_{12} I +\sigma_2 \bigg\{\int_0^t \frac{1}{4}\bigg[\frac{(g_1+g_{0t})^2}{2} + \cos g_0 -1\bigg](W_1)_{12}dt' 
	\\
&+i\Big[g_{0tt}(0) + g_{1t}(0) + \frac{1}{2} \sin g_0(0)\Big]\bigg\}.
\end{align*}

We following theorem gives the behavior of $T$ and $U$ for large $k$.

\begin{theorem}[Asymptotics of $T$ and $U$ as $k \to \infty$]\label{tth2}
Let $m \geq 1$, $n \geq 1$, and $N_t \in \Z$ be integers.  
Let $g_0(t)$ and $g_1(t)$ be functions satisfying the following regularity and decay assumptions:
\begin{align}\label{gjassump}
\begin{cases}
(1+t)^n(g_0(t) - 2\pi N_t) \in L^1([0,\infty)),
	\\
(1+t)^n\partial^i g_0(t) \in L^1([0,\infty)), \qquad i = 1, \dots, m+2,
	\\
(1+t)^n\partial^i g_1(t) \in L^1([0,\infty)), \qquad i = 0,1, \dots, m+1.
\end{cases}
\end{align}

As $k \to \infty$, $T$ and $U$ coincide to order $m$ with $T_{formal}$ and $U_{formal}$ respectively in the following sense: The functions
\begin{align*}
&T_p(t,k) := I + \frac{T_1(t)}{k} + \cdots + \frac{T_{m+1}(t)}{k^{m+1}},
	\\ \nonumber
&U_p(t,k) = I + \frac{Z_1(t)}{k} + \cdots + \frac{Z_{m+1}(t)}{k^{m+1}}
 + \bigg(\frac{W_1(k)}{k} + \cdots + \frac{W_{m+1}(t)}{k^{m+1}}\bigg) e^{2i\theta_2t\sigma_3},
\end{align*}
are well-defined and, for each $j = 0, 1, \dots, n$,
\begin{subequations}\label{varphiasymptotics-t}
\begin{align}\label{varphiasymptoticsa-t}
& \bigg|\frac{\partial^j}{\partial k^j}\big(T - T_p\big) \bigg| \leq
\frac{C}{|k|^{m+1}(1+t)^{n-j}}, \qquad t \geq 0, \   k \in (\bar{D}_4, \bar{D}_1),
	\\ \label{varphiasymptoticsb-t}
& \bigg|\frac{\partial^j}{\partial k^j}\big(U - U_p\big) \bigg| \leq
\frac{C(1 + t)^{j+2}e^{\frac{Ct}{|k|^{m+1}}}}{|k|^{m+1}}, \qquad t \geq 0, \  k \in (\bar{D}_1, \bar{D}_4),
	\\ \label{varphiasymptoticsc-t}
& \bigg|\frac{\partial^j}{\partial k^j}\big((U - U_p)e^{-2i\theta_2t\sigma_3}\big) \bigg| \leq
\frac{C(1 + t)^{j+2}e^{\frac{Ct}{|k|^{m+1}}}}{|k|^{m+1}}, \quad t \geq 0, \ k \in (\bar{D}_4, \bar{D}_1).
\end{align}
\end{subequations}
\end{theorem}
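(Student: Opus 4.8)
The plan is to transcribe the proof of Theorem \ref{xth2}, replacing $(\theta_1,\mathsf{U},u_0,u_1,X,Y)$ throughout by $(\theta_2,\mathsf{V},g_0,g_1,T,U)$. What makes this transcription legitimate is that $\theta_2(k)=\frac14(k+\frac1k)$ and $\theta_1(k)=\frac14(k-\frac1k)$ agree to leading order as $k\to\infty$: one has $\partial_k^\ell\theta_2=O(1)$ for all $\ell\geq1$ when $|k|\geq K>1$, exactly as for $\theta_1$, and $\im\theta_2=\frac{\im k}{4}\big(1-|k|^{-2}\big)$ has the same sign as $\im k$ on $\{|k|\geq K\}$. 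Consequently every bound used in the proof of Theorem \ref{xth2} for the exponentials $e^{\pm i\theta_1x}$ and their $k$-derivatives (in particular the estimates $|\partial_k^jE|\leq C(1+|x'-x|)^j$) has a verbatim counterpart for $e^{\pm i\theta_2 t}$. Moreover, for $|k|\geq K>1$ we have $\bar D_+\cap\{|k|\geq K\}=\bar\C_+\cap\{|k|\geq K\}=:\bar\C_+^K$ and likewise $\bar D_-\cap\{|k|\geq K\}=\bar\C_-^K$, so by Theorem \ref{tth1} the columns $[T]_2$ and $[U]_2$ are analytic precisely on the sets $\bar\C_+^K$ and $\bar\C_-^K$ that occurred in the proof of Theorem \ref{xth2}. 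Since it suffices to prove \eqref{varphiasymptotics-t} for $|k|\geq K$, the whole argument then goes through on the same domains.

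Concretely, I would run the following steps for $T$, each a copy of the corresponding step for $X$. First, the analog of Lemma \ref{lemma1}: assumption \eqref{gjassump} gives $|g_0^{(i)}|,|g_1^{(i)}|\leq C(1+t)^{-n}$ in the relevant ranges of $i$, and then the explicit formula for $T_1$ together with induction on the recursion \eqref{trecursive} shows that $\{T_j(t)\}_1^{m+1}$ are weakly differentiable with $(1+t)^nT_j\in L^1\cap L^\infty$ and $(1+t)^nT_j'\in L^1$. Second, the analog of Lemma \ref{lemma2}: choose $K>1$ so large that $T_p(t,k)^{-1}$ is given by a convergent Neumann series, set $A:=-i\theta_2\sigma_3+\mathsf{V}$ and $A_p:=(\partial_tT_p-i\theta_2T_p\sigma_3)T_p^{-1}$; since $T_{formal}$ is a formal solution of \eqref{tpartF}, the coefficient of $k^{-j}$ in the formal expansion of $\Delta:=A-A_p$ vanishes for $j\leq m$, and Lemma-\ref{lemma1}-type bounds give $|\partial_k^j\Delta|\leq Cf(t)|k|^{-m-1}(1+t)^{-n}$ with $f\in L^1$, hence $\|\partial_k^j\Delta(\cdot,k)\|_{L^1([t,\infty))}\leq C|k|^{-m-1}(1+t)^{-n}$. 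Third, differentiating $\big(e^{i\theta_2t\hat\sigma_3}(T_p^{-1}T)\big)_t=e^{i\theta_2t\hat\sigma_3}(T_p^{-1}\Delta T)$ and integrating from $t$ to $\infty$ (using $T\to I$) yields the Volterra equation
\begin{align*}
T(t,k)=T_p(t,k)-\int_t^\infty T_p(t,k)\,e^{i\theta_2(t'-t)\hat\sigma_3}(T_p^{-1}\Delta T)(t',k)\,dt',
\end{align*}
whose second column is solved by the same Neumann iteration as in the proof of Theorem \ref{xth2}; the factorial gain from the ordered simplex together with the $L^1$ smallness of $\Delta$ gives \eqref{varphiasymptoticsa-t} for $j=0$, and differentiating the integral equation in $k$ and inducting on $j$ (so that $\partial_k^j\Psi$ solves an integral equation with inhomogeneity of size $C|k|^{-m-1}(1+t)^{j-n}$) gives $j=1,\dots,n$.

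For $U$, I would run the $U$-counterparts of the remaining lemmas: the analog of Lemma \ref{lemma5} ($\{Z_j,W_j\}_1^{m+1}$ weakly differentiable, $Z_j,W_j\in L^\infty$, $Z_j',W_j'\in L^1$, from \eqref{trecursive}–\eqref{trecursive2}); the analog of Lemma \ref{lemma6} (invertibility of $Z_p(t,k),W_p(t,k)$ for $|k|\geq K$, with $|\partial_k^j\Delta_l|\leq(C+f(t))|k|^{-m-1}$ and $\|\partial_k^j\Delta_l(\cdot,k)\|_{L^1([0,t])}\leq Ct|k|^{-m-1}$); the truncated formal identities $Z_{formal}e^{-i\theta_2t\hat\sigma_3}Z_{formal}^{-1}(0)=U_{formal}$ and $W_{formal}e^{i\theta_2t\hat\sigma_3}W_{formal}^{-1}(0)=U_{formal}e^{-2i\theta_2t\sigma_3}$ (both sides being formal solutions of \eqref{tpartF} with the same data at $t=0$); and the Volterra equation obtained as the analog of Lemma \ref{lemma8},
\begin{align*}
U(t,k)=Z_p(t,k)\,e^{-i\theta_2t\hat\sigma_3}Z_p^{-1}(0,k)+\int_0^t Z_p(t,k)\,e^{-i\theta_2(t-t')\hat\sigma_3}(Z_p^{-1}\Delta_1 U)(t',k)\,dt',
\end{align*}
together with its counterpart for $u(t,k):=U(t,k)e^{-2i\theta_2t\sigma_3}$, which satisfies $u_t=Au-i\theta_2u\sigma_3$. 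Here the Neumann iteration runs over the finite interval $[0,t]$ with $\|\Delta_1(\cdot,k)\|_{L^1([0,t])}\leq Ct|k|^{-m-1}$, so summing the series produces the factor $e^{Ct/|k|^{m+1}}$, while the successive $k$-differentiations of $E$ and of $\Psi_0$ contribute the polynomial prefactors $(1+t)^{j+2}$; this gives \eqref{varphiasymptoticsb-t} and \eqref{varphiasymptoticsc-t}.

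I do not anticipate a genuine obstacle, since the $t$-part at $k=\infty$ is structurally identical to the $x$-part. The only point requiring a moment's care is the observation recorded above: for $|k|\geq K>1$ the exponentials $e^{i\theta_2(t'-t)}$ with $t'\geq t$ and $e^{-i\theta_2(t-t')}$ with $t'\leq t$ stay bounded precisely on $\bar\C_+^K$, because $\im\theta_2=\frac{\im k}{4}(1-|k|^{-2})$ has the same sign as $\im k$ there — exactly as for $\theta_1$ in Theorem \ref{xth2}. Once this is noted, the remainder of the argument is a transcription with $x\to t$.
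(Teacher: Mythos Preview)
Your proposal is correct and takes exactly the approach the paper intends: the paper's own proof of Theorem~\ref{tth2} simply reads ``The proof is similar to that of Theorem~\ref{xth2} and will be omitted.'' You have correctly identified the one point that merits comment in the transcription, namely that for $|k|\geq K>1$ one has $\im\theta_2=\tfrac{\im k}{4}(1-|k|^{-2})$ with the same sign as $\im k$, so that the exponential bounds and the domains $\bar\C_\pm^K$ from the $x$-part argument carry over verbatim.
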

\begin{proof}
The proof is similar to that of Theorem \ref{xth2} and will be omitted.
\end{proof}

\subsection{Asymptotics as $k \to 0$}
We now turn to the behavior of $T$ and $U$ as $k \to 0$. Let
\begin{align}\label{G0tdef}
\mathcal{G}_0(t)= (-1)^{N_t} \begin{pmatrix}
  \cos \frac{g_0(t)}{2} & -\sin \frac{g_0(t)}{2} \\
  \sin \frac{g_0(t)}{2}  & \cos \frac{g_0(t)}{2}
 \end{pmatrix},
\end{align}
where the factor $(-1)^{N_t}$ has been inserted to ensure that $\mathcal{G}_0(t) \to I$ as $t \to \infty$. 
If $X$ satisfies (\ref{tpartF}), then $\hat{T} = \mathcal{G}_0^{-1}T$ satisfies
\begin{align}\label{tpartF-0}
\hat{T}_t + i\theta_2[\sigma_3, \hat{T}] = \hat{\mathsf{V}}\hat{T},
\end{align}
where the function $\hat{\mathsf{V}}$ is regular at $k = 0$:
$$\hat{\mathsf{V}}(t,k) = \hat{\mathsf{V}}_0(t)+ k\hat{\mathsf{V}}_1(t)$$
with
$$\hat{\mathsf{V}}_0(t)=-\frac{i(g_1- g_{0t})}{4}\sigma_2, \qquad 
  \hat{\mathsf{V}}_1(t) = \frac{i\sin g_0}{4}\sigma_1 - \frac{i(\cos{g_0} -1)}{4}\sigma_3.$$
This leads to the following alternative representations for $T$ and $U$:
\begin{align}\label{TThatUUhat}
T(t,k) = \mathcal{G}_0(t)\hat{T}(t,k), \qquad U(t,k) = \mathcal{G}_0(t)\hat{U}(t,k)e^{-i\theta_2t\hat{\sigma}_3}\mathcal{G}_0^{-1}(0),
\end{align}
where $\hat{T}$ and $\hat{U}$ satisfy the Volterra integral equations
\begin{subequations}\label{TUhatdef}
\begin{align}  \label{TUhatdefa}
 & \hat{T}(t,k) = I + \int_{\infty}^t e^{i\theta_2(t'-t)\hat{\sigma}_3} (\hat{\mathsf{V}}\hat{T})(t',k) dt',
  	\\\label{TUhatdefb}
& \hat{U}(t,k) = I + \int_{0}^t e^{i\theta_2(t'-t)\hat{\sigma}_3} (\hat{\mathsf{V}}\hat{U})(t',k) dt'.
\end{align}
\end{subequations}
Equation (\ref{tpartF-0}) admits formal power series solutions $\hat{T}_{formal}$ and $\hat{U}_{formal}$, normalized at $t = \infty$ and $t = 0$ respectively, such that
\begin{align*}
& \hat{T}_{formal}(t,k) = I + \hat{T}_1(t)k + \hat{T}_2(t)k^2 + \cdots,
	\\
& \hat{U}_{formal}(t,k) =  I + \hat{Z}_1(t) k + \hat{Z}_2(t) k^2 + \cdots + \big(\hat{W}_1(t) k + \hat{W}_2(t) k^2 + \cdots \big) e^{2i\theta_2t\sigma_3},
\end{align*}
where the coefficients $\{\hat{T}_j(t), \hat{Z}_j(t), \hat{W}_j(t)\}_1^\infty$ satisfy
\begin{align}\label{Fjnormalization-t0}
\lim_{t\to \infty} \hat{T}_j(t) = 0, \qquad \hat{Z}_j(0) + \hat{W}_j(0) = 0, \qquad j \geq 1.
\end{align}
Indeed, substituting
$$\hat{T} = I + {\hat{T}_1(t)}{k} + {\hat{T}_2(t)}{k^2} + \cdots$$
into (\ref{tpartF-0}), the off-diagonal terms of $O(k^{j})$ and the diagonal terms of $O(k^{j+1})$ yield the relations
\begin{align}\label{trecursive-0}
\begin{cases}
\hat{T}_{j+1}^{(o)} = -2 i \sigma_3\big(-\partial_t\hat{T}_{j}^{(o)}-\frac{i}{2}\sigma_3 \hat{T}_{j-1}^{(o)}
+\hat{\mathsf{V}}_0 \hat{T}_j^{(d)}+ \hat{\mathsf{V}}_1^{(o)} \hat{T}_{j-1}^{(d)}
+ \hat{\mathsf{V}}_1^{(d)} \hat{T}_{j-1}^{(o)}\big),
	\\
\partial_t \hat{T}_{j+1}^{(d)} = \hat{\mathsf{V}}_0 \hat{T}_{j+1}^{(o)}
+ \hat{\mathsf{V}}_1^{(o)} \hat{T}_j^{(o)} + \hat{\mathsf{V}}_1^{(d)} \hat{T}_j^{(d)}.
\end{cases}
\end{align}
The coefficients $\{\hat{Z}_j\}$ satisfy the equations obtained by replacing $\{\hat{T}_j\}$ with $\{\hat{Z}_j\}$ in \eqref{trecursive-0}.
Similarly, substituting
$$\hat{T} = \big({\hat{W}_1(t)}{k} + {\hat{W}_2(t)}{k^2} + \cdots\big) e^{2i\theta_2t\sigma_3}$$
into (\ref{tpartF-0}), the diagonal terms of $O(k^{j})$ and the off-diagonal terms of $O(k^{j+1})$ yield the relations
\begin{align}\label{trecursive2-0}
\begin{cases}
\hat{W}_{j+1}^{(d)} = -2 i \sigma_3\big(-\partial_t\hat{W}_{j}^{(d)}-\frac{i}{2}\sigma_3 \hat{W}_{j-1}^{(d)} + \hat{\mathsf{V}}_0 \hat{W}_j^{(o)}
+ \hat{\mathsf{V}}_1^{(o)} \hat{W}_{j-1}^{(o)}
+ \hat{\mathsf{V}}_1^{(d)} \hat{W}_{j-1}^{(d)}\big),
	\\
\partial_t \hat{W}_{j+1}^{(o)} = \hat{\mathsf{V}}_0 \hat{W}_{j+1}^{(d)}+\hat{\mathsf{V}}_1^{(o)} \hat{W}_j^{(d)}+\hat{\mathsf{V}}_1^{(d)} \hat{T}_j^{(o)}.
\end{cases}
\end{align}
The coefficients $\{\hat{T}_j(t), \hat{Z}_j(t), \hat{W}_j(t)\}$ are determined recursively from (\ref{Fjnormalization-t0})-(\ref{trecursive2-0}), the equations obtained by replacing $\{\hat{T}_j\}$ with $\{\hat{Z}_j\}$ in (\ref{trecursive-0}), and the initial assignments
$$\hat{T}_{-1} = 0, \qquad \hat{T}_0 = I, \qquad \hat{Z}_{-1} = 0, \qquad \hat{Z}_0 = I, \qquad \hat{W}_{-1}=\hat{W}_0 = 0.$$
The first few coefficients are given by
\begin{align*}\nonumber
\hat{T}_1(t) = &\; \frac {i(g_1-g_{0t})}{2} \sigma_1- \sigma_3 \frac{i}{4} \int_{\infty}^{t}\bigg[\frac{(g_1-g_{0t})^2}{2}+\cos g_0-1 \bigg]dt',
	\\ \nonumber
\hat{T}_2(t) = &\; i \bigg[g_{0tt}-g_{1t}+\frac{i}{2}(g_1-g_{0t})(\hat{T}_1)_{22} + \frac{1}{2} \sin g_0\bigg]\sigma_2
	\\ \nonumber
 &\;+ I\int_{\infty}^t \bigg\{\frac{i}{4}\Big[\frac{(g_{1} - g_{0t})^2}{2} + \cos{g_0} - 1\Big](\hat{T}_1)_{22}
 - \frac{1}{4}(g_{1} - g_{0t})(g_{1t} - g_{0tt})\bigg\} dt',
	\\ 
\hat{Z}_1(t) = &\; \frac {i(g_1-g_{0t})}{2} \sigma_1- \sigma_3 \frac{i}{4} \int_{0}^{t}\bigg[\frac{(g_1-g_{0t})^2}{2}+\cos g_0-1 \bigg]dt',
	\\ \nonumber
\hat{Z}_2(t) = &\; i \bigg[g_{0tt}-g_{1t}+\frac{i}{2}(g_1-g_{0t})(\hat{Z}_1)_{22} + \frac{1}{2} \sin g_0\bigg]\sigma_2
	\\\nonumber
 &\;+ I\bigg\{\int_{0}^t \bigg[\frac{i}{4}\Big[\frac{(g_{1} - g_{0t})^2}{2} + \cos{g_0} - 1\Big](\hat{Z}_1)_{22}
 - \frac{1}{4}(g_{1} - g_{0t})(g_{1t} - g_{0tt})\bigg] dt'
 	\\
& -\frac{1}{4} (g_1(0)-g_{0t}(0))^2\bigg\},
\end{align*}
and
\begin{align*}
\hat{W}_1(t) = & -\frac{i(g_1(0)-g_{0t}(0))}{2}\sigma_1,
	\\ \nonumber
\hat{W}_2(t) = &\; \frac{i(g_1-g_{0t})}{2} (\hat{W}_1)_{12} I 
+\sigma_2 \bigg\{\int_0^t \frac{1}{4}\bigg[\frac{(g_1-g_{0t})^2}{2} + \cos g_0 -1\bigg](\hat{W}_1)_{12}dt' 
	\\
&-i\Big[g_{0tt}(0) - g_{1t}(0) + \frac{1}{2} \sin g_0(0)\Big]\bigg\}.
\end{align*}

\begin{theorem}[Asymptotics of $\hat{T}$ and $\hat{U}$ as $k \to 0$]\label{tth3}
Let $g_0(t)$ and $g_1(t)$ be functions satisfying (\ref{gjassump}) for some given integers $m \geq 1$, $n \geq 1$, and $N_t \in \Z$.

As $k \to 0$, $\hat{T}$ and $\hat{U}$ coincide to order $m$ with $\hat{T}_{formal}$ and $\hat{U}_{formal}$ respectively in the following sense: The functions
\begin{align*}
&\hat{T}_p(t,k) = I + \hat{T}_1(t)k + \cdots + \hat{T}_{m+1}(t) k^{m+1},
	\\ \nonumber
&\hat{U}_p(t,k) = I + \hat{Z}_1(t)k + \cdots + \hat{Z}_{m+1}(t) k^{m+1}
 + \big(\hat{W}_1(t)k + \cdots + \hat{W}_{m+1}(t) k^{m+1}\big) e^{2i\theta_2t\sigma_3},
\end{align*}
are well-defined and, for each $j = 0, 1, \dots, n$,
\begin{align*}
& \bigg|\frac{\partial^j}{\partial k^j}\big(\hat{T} - \hat{T}_p\big) \bigg| \leq
\frac{C|k|^{m+1-2j}}{(1+t)^{n-j}}, \qquad t \geq 0, \  k \in (\bar{D}_2, \bar{D}_3),
	\\ 
& \bigg|\frac{\partial^j}{\partial k^j}\big(\hat{U} - \hat{U}_p\big) \bigg| \leq
C(1 + t)^{j+2}|k|^{m+1-2j}e^{Ct|k|^{m+1}}, \qquad t \geq 0, \  k \in (\bar{D}_3, \bar{D}_2)
	\\ 
& \bigg|\frac{\partial^j}{\partial k^j}\big((\hat{U} - \hat{U}_p)e^{-2i\theta_2t\sigma_3}\big) \bigg| \leq
C(1 + t)^{j+2} |k|^{m+1-2j} e^{Ct|k|^{m+1}}, \qquad t \geq 0, \  k \in (\bar{D}_2, \bar{D}_3).
\end{align*}
\end{theorem}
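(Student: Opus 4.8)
The argument mirrors the proof of Theorem \ref{xth3} essentially line by line. The point is that the $t$-part problem (\ref{tpartF-0}) for $\hat T,\hat U$ has exactly the same structure as the $x$-part problem (\ref{xpartF-0}) for $\hat X,\hat Y$: the potential $\hat{\mathsf V}=\hat{\mathsf V}_0+k\hat{\mathsf V}_1$ is regular (indeed affine) in $k$, the eigenfunction $\hat T$ is normalized at $t=\infty$ and $\hat U$ at $t=0$, and the decay/regularity hypotheses (\ref{gjassump}) on $(g_0,g_1)$ have precisely the form of (\ref{ujassump}) on $(u_0,u_1)$. We work directly from the Volterra equations (\ref{TUhatdef}). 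First I would establish the analog of Lemma \ref{lemma1-0}: each coefficient $\hat T_j$ ($j=1,\dots,m+1$) is weakly differentiable with $(1+t)^n\hat T_j,\,(1+t)^n\hat T_j'\in L^1([0,\infty))$ and $(1+t)^n\hat T_j\in L^\infty([0,\infty))$, together with the corresponding statement for $\hat Z_j,\hat W_j$; this follows from the recursions (\ref{trecursive-0})--(\ref{trecursive2-0}) and from the pointwise bounds $|g_0^{(i)}(t)|,|g_1^{(i)}(t)|\le C(1+t)^{-n}$ obtained by integrating (\ref{gjassump}) exactly as in (\ref{1xuiest}).

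Next I would prove the analog of Lemma \ref{lemma2-0}: there is a constant $K<1$ such that $\hat T_p(t,k)^{-1}$ exists for $|k|\le K$, and, with $\hat A:=-i\theta_2\sigma_3+\hat{\mathsf V}$ and $\hat A_p:=(\partial_t\hat T_p-i\theta_2\hat T_p\sigma_3)\hat T_p^{-1}$, the difference $\hat\Delta:=\hat A-\hat A_p$ satisfies $|\partial_k^j\hat\Delta(t,k)|\le f(t)|k|^{m+1-j}(1+t)^{-n}$ for some $f\in L^1([0,\infty))$; the mechanism is that $\hat T_{formal}$ is a formal solution of (\ref{tpartF-0}), so the coefficient of $k^j$ in the formal expansion of $\hat\Delta$ vanishes for $j\le m$, that of $\partial_k\hat\Delta$ vanishes for $j\le m-1$, and so on, absorbing the loss of one power per $k$-derivative. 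Given $\hat\Delta$, the three estimates for $\hat T$ and $\hat U$ follow by setting up the Volterra equations satisfied respectively by $\hat T$ (integrating factor $\hat T_p$, as in (\ref{hatXhatXp-0})), by $\hat U$ (comparison with $\hat Z_p e^{-i\theta_2 t\hat\sigma_3}\hat Z_p^{-1}(0,k)$, as in (\ref{EEYVolterra-0})), and by $\hat U e^{-2i\theta_2 t\sigma_3}$ (comparison with $\hat W_p e^{i\theta_2 t\hat\sigma_3}\hat W_p^{-1}(0,k)$), then running the Neumann series and differentiating in $k$ as in Lemmas \ref{lemma3-0}--\ref{lemma9-0}. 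The only inputs that change are the substitutions $x\leftrightarrow t$, $\theta_1\leftrightarrow\theta_2$, $\mathsf U\leftrightarrow\mathsf V$, $\hat{\mathsf U}\leftrightarrow\hat{\mathsf V}$, $G_0\leftrightarrow\mathcal G_0$, and the use of the $t$-coefficients $\{\hat T_j,\hat Z_j,\hat W_j\}$ in place of $\{\hat X_j,\hat Z_j,\hat W_j\}$.

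The one genuine point of difference, and the main thing to watch, is the labeling of the domains. Because $\theta_2=\frac14(k+\frac1k)$ has polar part $+\frac1{4k}$ near $k=0$ whereas $\theta_1=\frac14(k-\frac1k)$ has polar part $-\frac1{4k}$, the sign of $\mathrm{Re}\big(2i\theta_2(t'-t)\big)$ along the Volterra iterations is opposite to that of $\mathrm{Re}\big(2i\theta_1(x'-x)\big)$ in the $\hat X$ analysis. Consequently the column of $\hat T$ whose kernel $e^{2i\theta_2(t'-t)}$ (with $t'\ge t$) remains bounded lies in $\bar D_3$ rather than $\bar D_2$, and symmetrically for the other column; this is why the estimate for $\hat T-\hat T_p$ is asserted on $(\bar D_2,\bar D_3)$ and those for $\hat U$ on $(\bar D_3,\bar D_2)$, with the roles of $D_2$ and $D_3$ interchanged relative to Theorem \ref{xth3}. (For $k\to\infty$ this sign is invisible since $\theta_1,\theta_2\sim k/4$, which is why Theorems \ref{xth2} and \ref{tth2} share the same domain labels.) One also checks that $\partial_k\theta_2=\frac14(1-k^{-2})$ has the same $O(|k|^{-2})$ growth at $k=0$ as $\partial_k\theta_1=\frac14(1+k^{-2})$, so each $k$-derivative of the exponential again produces a factor $|k|^{-2}$ and the final estimates carry the power $|k|^{m+1-2j}$ exactly as before. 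With this bookkeeping in place, the argument of Theorem \ref{xth3} transfers without further change; since the adaptation is purely mechanical, the details may be omitted.
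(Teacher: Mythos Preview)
Your proposal is correct and matches the paper's approach exactly: the paper's own proof consists of the single sentence ``The proof is similar to that of Theorem \ref{xth3} and will be omitted.'' In fact you go further than the paper by explicitly flagging the one nontrivial bookkeeping point, namely that the opposite sign of the polar part of $\theta_2$ versus $\theta_1$ at $k=0$ is what swaps the roles of $\bar D_2$ and $\bar D_3$ in the estimates.
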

\begin{proof}
The proof is similar to that of Theorem \ref{xth3} and will be omitted.
\end{proof}

\begin{corollary}[Asymptotics of $T$ and $U$ as $k \to 0$]\label{tcor}
Let $g_0(t)$ and $g_1(t)$ be functions satisfying (\ref{gjassump}) for some given integers $m \geq 1$, $n \geq 1$, and $N_t \in \Z$.

As $k \to 0$, $T$ and $U$ coincide to order $m$ with $\mathcal{G}_0\hat{T}_{formal}$ and $\mathcal{G}_0\hat{U}_{formal} e^{-i\theta_2 t \hat{\sigma}_3}\mathcal{G}_0^{-1}(0)$ respectively in the following sense: 
For each $j = 0, 1, \dots, n$,
\begin{subequations}
\begin{align}
& \bigg|\frac{\partial^j}{\partial k^j}\big(T - \mathcal{G}_0\hat{T}_p\big) \bigg| \leq
\frac{C|k|^{m+1-2j}}{(1+t)^{n-j}}, \qquad t \geq 0, \  k \in (\bar{D}_2, \bar{D}_3),
	\\ \nonumber
& \bigg|\frac{\partial^j}{\partial k^j}\big(U - \mathcal{G}_0\hat{U}_p e^{-i\theta_2 t \hat{\sigma}_3}\mathcal{G}_0^{-1}(0)\big) \bigg| \leq C(1 + t)^{j+2}|k|^{m+1-2j}e^{Ct|k|^{m+1}}, 
	\\ 
&\hspace{9cm} t \geq 0, \  k \in (\bar{D}_3, \bar{D}_2),
	\\ \nonumber
& \bigg|\frac{\partial^j}{\partial k^j}\Big(\big(U - \mathcal{G}_0\hat{U}_p e^{-i\theta_2 t \hat{\sigma}_3}\mathcal{G}_0^{-1}(0)\big)e^{-2i\theta_2 t\sigma_3}\Big) \bigg| \leq
C(1 + t)^{j+2} |k|^{m+1-2j} e^{Ct|k|^{m+1}}, 
	\\ 
&\hspace{9cm} t \geq 0, \  k \in (\bar{D}_2, \bar{D}_3),
\end{align}
\end{subequations}
where $\mathcal{G}_0(t)$ is defined by (\ref{G0tdef}).
In particular, for each $t \geq 0$, it holds that $T(t,\cdot)$ is bounded for $k \in (\bar{D}_-, \bar{D}_+)$; $U(t,\cdot)$ is bounded for $k \in (\bar{D}_+, \bar{D}_-)$; and $U(t,\cdot)e^{-2i\theta_2 t\sigma_3}$ is bounded for $k \in (\bar{D}_-, \bar{D}_+)$.
\end{corollary}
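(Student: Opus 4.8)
The plan is to derive Corollary \ref{tcor} from Theorem \ref{tth3} in exactly the way Corollary \ref{xcor} is derived from Theorem \ref{xth3}, using the factorizations (\ref{TThatUUhat}) to transfer information from the regular-at-zero eigenfunctions $\hat T,\hat U$ to $T,U$. For the estimate on $T$, since $\mathcal{G}_0(t)$ is independent of $k$ the identity $T=\mathcal{G}_0\hat T$ gives $\partial_k^j(T-\mathcal{G}_0\hat T_p)=\mathcal{G}_0\,\partial_k^j(\hat T-\hat T_p)$, and as $\mathcal{G}_0$ is (up to a sign) orthogonal, hence bounded, the first estimate of the corollary follows immediately from the first estimate of Theorem \ref{tth3}.

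For $U$ I would start from $U=\mathcal{G}_0(t)\hat U(t,k)e^{-i\theta_2 t\hat\sigma_3}\mathcal{G}_0^{-1}(0)$ and the analogous expression for the comparison function $\mathcal{G}_0\hat U_p e^{-i\theta_2 t\hat\sigma_3}\mathcal{G}_0^{-1}(0)$. Subtracting and using that $\mathcal{G}_0$ is $k$-independent gives, for the second column,
\[
\big[U-\mathcal{G}_0\hat U_p e^{-i\theta_2 t\hat\sigma_3}\mathcal{G}_0^{-1}(0)\big]_2=\mathcal{G}_0\big[(\hat U-\hat U_p)e^{-i\theta_2 t\hat\sigma_3}\mathcal{G}_0^{-1}(0)\big]_2,
\]
and expanding the action of $e^{-i\theta_2 t\hat\sigma_3}$ and of $\mathcal{G}_0^{-1}(0)$ expresses the right-hand side as a linear combination of $[\hat U-\hat U_p]_1\,e^{-2i\theta_2 t}\sin\tfrac{g_0(0)}{2}$ and $[\hat U-\hat U_p]_2\cos\tfrac{g_0(0)}{2}$. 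Estimating the first term by the third estimate of Theorem \ref{tth3} (which controls $(\hat U-\hat U_p)e^{-2i\theta_2 t\sigma_3}$) and the second by the second estimate of Theorem \ref{tth3}, and then differentiating $j$ times — using $|\partial_k e^{\pm 2i\theta_2 t}|\le C\,t|k|^{-2}|e^{\pm 2i\theta_2 t}|$ for small $k$, exactly as in the proof of Corollary \ref{xcor} — will give the second column of the second estimate of the corollary. The first column, and both columns of the $e^{-2i\theta_2 t\sigma_3}$-weighted estimate, are obtained the same way.

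The one point requiring genuine care is the domain bookkeeping for the exponentials. Because $\theta_2=\tfrac14(k+k^{-1})\sim\tfrac1{4k}$ as $k\to0$, the factor $e^{2i\theta_2 t}$ is bounded on $\bar D_3$ and $e^{-2i\theta_2 t}$ is bounded on $\bar D_2$ — the reverse of the $x$-part situation, where $\theta_1\sim-\tfrac1{4k}$ — so one must check that the decomposition $\hat U_p=\hat Z_p+\hat W_p e^{2i\theta_2 t\sigma_3}$ contributes a genuinely bounded term precisely on the domain pairs $(\bar D_3,\bar D_2)$ and $(\bar D_2,\bar D_3)$ appearing in Theorem \ref{tth3} and the corollary. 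This is the analog of the bookkeeping already carried out in the proof of Corollary \ref{xcor}, and I expect it to be the main (indeed essentially the only) obstacle, since a sign error there would land the estimate in the wrong half-disk.

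Finally, for the boundedness assertions I would patch together three regions. Near $k=\infty$, Theorem \ref{tth2}, or already the $j=0$ case of the crude estimates (\ref{Fest-t}), shows that $T-I$, $U-I$, and $(U-I)e^{-2i\theta_2 t\sigma_3}$ are bounded on the relevant closed half-domains. Near $k=0$, the comparison functions $\mathcal{G}_0\hat T_p$, $\mathcal{G}_0\hat U_p e^{-i\theta_2 t\hat\sigma_3}\mathcal{G}_0^{-1}(0)$, and its $e^{-2i\theta_2 t\sigma_3}$-weighted version are bounded, because the coefficients $\hat T_j,\hat Z_j,\hat W_j$ are bounded (as in Lemmas \ref{lemma1-0} and \ref{lemma5-0}) and the exponentials $e^{\pm2i\theta_2 t}$ are each bounded on the appropriate half-disk; combined with the $j=0$ case of the corollary's estimates this gives boundedness of $T$, $U$, and $Ue^{-2i\theta_2 t\sigma_3}$ near $k=0$. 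On the compact annulus $\{K\le|k|\le K^{-1}\}$ boundedness is immediate from the continuity in Theorem \ref{tth1}. Assembling the three pieces yields the stated global boundedness, and since every remaining step is a verbatim transcription of the proof of Corollary \ref{xcor} with $X,Y,\hat X,\hat Y,G_0,\theta_1,x,u_0$ replaced by $T,U,\hat T,\hat U,\mathcal{G}_0,\theta_2,t,g_0$, no further work is needed.
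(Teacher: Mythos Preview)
Your proposal is correct and follows exactly the approach the paper intends: the paper's own proof simply reads ``The proof is similar to that of Corollary \ref{xcor} and will be omitted,'' and you have carried out precisely that transcription, including the key column decomposition via $\mathcal{G}_0^{-1}(0)$ and the correct domain bookkeeping for $\theta_2$ near $k=0$. One minor simplification: you need not invoke the bound $|\partial_k e^{\pm 2i\theta_2 t}|\le Ct|k|^{-2}|e^{\pm 2i\theta_2 t}|$ separately, since the $k$-independent coefficients $\mathcal{G}_0$, $\sin\tfrac{g_0(0)}{2}$, $\cos\tfrac{g_0(0)}{2}$ commute with $\partial_k^j$, so the $j$-th derivative estimates of Theorem \ref{tth3} apply directly to each term of your decomposition (this is how the paper handles it in Corollary \ref{xcor}).
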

\begin{proof}
The proof is similar to that of Corollary \ref{xcor} and will be omitted.
\end{proof}

\section{Spectral functions}

The next two theorems establish several properties of the spectral functions $a,b,A,B$.

\begin{theorem}[Properties of $a(k)$ and $b(k)$]\label{abth}
Suppose $\{u_0(x), u_1(x)\}$ satisfy (\ref{ujassump}) for some integers $m \geq 1$, $n \geq 1$, and $N_x \in \Z$.
Then the spectral functions  $a(k)$ and $b(k)$ defined in (\ref{abABdef}) have the following properties:
\begin{enumerate}[$(a)$]
\item $a(k)$ and $b(k)$ are continuous for $\im k \geq 0$ and analytic for $\im k > 0$.

\item For $j = 1, \dots n$, the derivatives $a^{(j)}(k)$ and $b^{(j)}(k)$ are well-defined and continuous on $\bar{\C}_+ \setminus \{0\}$.

\item There exist complex constants $\{a_i, b_i, \hat{a}_i, \hat{b}_i\}_{i=1}^m$ such that the following expansions hold uniformly for each $j = 0, 1, \dots n$:
\begin{align*}\nonumber
\begin{pmatrix} b^{(j)}(k) \\ a^{(j)}(k) \end{pmatrix}
= \begin{pmatrix} \frac{d^j}{dk^j}\big(\frac{b_1}{k} + \cdots + \frac{b_m}{k^m}\big) \\
\frac{d^j}{dk^j}\big(1 + \frac{a_1}{k} + \cdots + \frac{a_m}{k^m}\big)
\end{pmatrix} + O\bigg(\frac{1}{k^{m+1}}\bigg),\quad k \to \infty, \ \im k \geq 0,
\end{align*}
and
\begin{align*}
\begin{pmatrix} b^{(j)}(k)  \\ a^{(j)}(k) \end{pmatrix}
= G_0(0) \begin{pmatrix} \frac{d^j}{dk^j}\big(\hat{b}_1k + \cdots + \hat{b}_mk^m\big) \\
\frac{d^j}{dk^j}\big(1 + \hat{a}_1k + \cdots + \hat{a}_mk^m\big)
\end{pmatrix} + O\big(k^{m+1-2j}\big),\quad k \to 0, \ \im k \geq 0,
\end{align*}
where the matrix $G_0(x)$ is defined in (\ref{G0xdef}).
 
\item $a(k) = \overline{a(-\bar{k})}$ and $b(k) = \overline{b(-\bar{k})}$ for $\im k \geq 0$.

\item $|a(k)|^2 +|b(k)|^2 = 1$ for $k \in \R$.

\end{enumerate}
\end{theorem}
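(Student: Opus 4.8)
The spectral functions $a(k)$ and $b(k)$ are simply the entries of $s(k)=X(0,k)$ via (\ref{abABdef}): we have $a(k) = X_{22}(0,k)$ and $b(k) = X_{12}(0,k)$. Therefore every property in the theorem should be read off from the corresponding property of $X(x,k)$ proved in Theorems \ref{xth1}, \ref{xth2}, and Corollary \ref{xcor}, specialized to $x=0$. For part $(a)$: by Theorem \ref{xth1}$(c)$, $X(0,\cdot)$ is continuous on $(\bar{\C}_-,\bar{\C}_+)\setminus\{0\}$ and analytic on $(\C_-,\C_+)$; the second column, which carries $a$ and $b$, is continuous on $\bar{\C}_+\setminus\{0\}$ and analytic on $\C_+$. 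The only thing not immediately given by Theorem \ref{xth1} is continuity \emph{up to and including} $k=0$; this is supplied by Corollary \ref{xcor}, which shows $X(0,\cdot)$ is bounded near $k=0$, and in fact the expansion in Corollary \ref{xcor} (or Theorem \ref{xth3} applied at $x=0$) gives a continuous extension to $k=0$. So $a,b$ extend continuously to all of $\bar{\C}_+$. Part $(b)$ is the statement that $\partial_k^j X(0,\cdot)$ has a continuous extension to $\bar{\C}_+\setminus\{0\}$ for $j=1,\dots,n$, which is exactly Theorem \ref{xth1}$(e)$.

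For part $(c)$, the $k\to\infty$ expansion: apply Theorem \ref{xth2} at $x=0$. Since the $W_j$-terms in $Y_p$ come multiplied by $e^{2i\theta_1 x\sigma_3}$ which is $I$ at $x=0$ — but here we need the expansion of $X$, not $Y$ — we use the estimate (\ref{varphiasymptoticsa}) with $x=0$: $\big|\partial_k^j(X(0,k) - X_p(0,k))\big|\le C|k|^{-m-1}$ for $j=0,\dots,n$, where $X_p(0,k) = I + X_1(0)/k + \cdots + X_{m+1}(0)/k^{m+1}$. Extracting the $(1,2)$ and $(2,2)$ entries and setting $b_i := (X_i(0))_{12}$, $a_i := (X_i(0))_{22}$ for $i=1,\dots,m$ (note $(X_i(0))_{22}$ contributes to $a$ and the $I$ contributes the leading $1$), gives the claimed expansion; the remainder from the $X_{m+1}/k^{m+1}$ term is also $O(k^{-m-1})$ after differentiation, so it can be absorbed. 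For the $k\to 0$ expansion, use Corollary \ref{xcor} at $x=0$: there $e^{-i\theta_1 x\hat\sigma_3}=\mathrm{id}$ and $e^{2i\theta_1 x\sigma_3}=I$, so $X(0,k)$ is compared with $G_0(0)\hat{X}_p(0,k)$, and the estimate (\ref{asymptoticszeroa}) gives $\big|\partial_k^j(X(0,k) - G_0(0)\hat X_p(0,k))\big| \le C|k|^{m+1-2j}$. Since $\hat X_p(0,k) = I + \hat X_1(0)k + \cdots + \hat X_{m+1}(0)k^{m+1}$, setting $\hat b_i := (\hat X_i(0))_{12}$, $\hat a_i := (\hat X_i(0))_{22}$ and discarding the order-$k^{m+1}$ coefficient into the remainder (which after $j$ differentiations is $O(k^{m+1-j})$, dominated by $O(k^{m+1-2j})$) yields the stated formula with the prefactor $G_0(0)$.

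Parts $(d)$ and $(e)$ are consequences of the symmetries and determinant relations for $X$. For $(d)$: Theorem \ref{xth1}$(h)$ gives $X(x,k) = \sigma_2 X(x,-k)\sigma_2 = \sigma_2\overline{X(x,\bar k)}\sigma_2$. Combining these two, $X(x,k) = \overline{X(x,-\bar k)}$. At $x=0$, reading off entries: $a(k) = X_{22}(0,k) = \overline{X_{22}(0,-\bar k)} = \overline{a(-\bar k)}$ and similarly $b(k) = X_{12}(0,k) = \overline{X_{12}(0,-\bar k)} = \overline{b(-\bar k)}$; this holds for $\im k \ge 0$ since then $-\bar k$ also has $\im(-\bar k)\ge 0$ so $X(0,-\bar k)$ is defined (and $k=0$ is handled by continuity from $(a)$). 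For $(e)$: Theorem \ref{xth1}$(g)$ gives $\det X(0,k) = 1$ for $k\in\R\setminus\{0\}$, and by the structure (\ref{abABdef}) of $s(k)$, $\det s(k) = \overline{a(\bar k)}a(k) + \overline{b(\bar k)}b(k)$; for $k\in\R$ this is $|a(k)|^2 + |b(k)|^2$, hence $=1$ for $k\in\R\setminus\{0\}$, and $=1$ at $k=0$ by continuity.

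\textbf{Main obstacle.} None of the steps is deep; the only genuinely delicate point is justifying the behavior at $k=0$ — showing that $a,b$ (and their derivatives up to order $n$, though $(b)$ only claims continuity on $\bar{\C}_+\setminus\{0\}$) extend continuously across $k=0$ and satisfy the stated expansion. This is where one must invoke the $\hat X$-formalism of Corollary \ref{xcor} rather than the original Volterra equation (\ref{XYdefa}), since $\mathsf U$ is singular at $k=0$; the bookkeeping of which coefficient $\hat X_i(0)$ goes into $\hat a_i$ versus $\hat b_i$, and checking that the $(-1)^{N_x}$ factor in $G_0$ and the normalization $G_0(0)$ prefactor are consistent, requires a little care but is routine once Corollary \ref{xcor} is in hand. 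Everything else is a direct transcription of properties of $X(0,k)$.
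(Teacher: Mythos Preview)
Your proposal is correct and follows exactly the paper's approach: the paper's proof is the single sentence ``The theorem follows upon setting $x = 0$ in Theorem \ref{xth1}, Theorem \ref{xth2}, and Corollary \ref{xcor},'' and you have simply spelled out in detail how each item $(a)$--$(e)$ is read off from those results at $x=0$. Your additional care about continuity at $k=0$ and the identification of the coefficients $a_i,b_i,\hat a_i,\hat b_i$ with entries of $X_i(0),\hat X_i(0)$ is accurate and matches the paper's subsequent remarks (equations (\ref{bjajexplicit}) and (\ref{bjajhatexplicit})).
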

\begin{proof} 
%Letting $b_j = (X_j(0))_{12}$, $a_j = (X_j(0))_{22}$, $\hat{b}_j = (\hat{X}_j(0))_{12}$, $\hat{A}_j = (\hat{X}_j(0))_{22}$ and 
The theorem follow upon setting $x = 0$ in Theorem \ref{xth1}, Theorem \ref{xth2}, and Corollary \ref{xcor}. 
\end{proof}

Evaluation of the coefficients $X_j(x)$ at $x = 0$ gives explicit formulas for the coefficients $\{a_j, b_j\}$ of Theorem \ref{abth} in terms of $u_0(x)$ and $u_1(x)$. For example, we find from (\ref{x1}) that
\begin{align}\nonumber 
& b_1 = \frac{i(u_{0x}(0) + u_1(0))}{2}, 
	\\\nonumber
& a_1 = \frac{i}{4} \int_0^\infty \bigg(-\frac{(u_{0x} + u_1)^2}{2} + \cos{u_0} -1\bigg)dx,
	\\\label{bjajexplicit}
& b_2 = \bigg(-u_{0xx} - u_{1x} + \frac{i}{2}(u_{0x} + u_1)a_1 + \frac{\sin u_0}{2}\bigg)\bigg|_{x=0}.
\end{align}
Similarly, evaluation of the $\hat{X}_j(x)$ at $x= 0$ gives formulas for the coefficients $\{\hat{a}_j, \hat{b}_j\}$. By (\ref{x1-0}), we have
\begin{align}\nonumber
& \hat{b}_1 = \frac{i(u_{0x}(0) - u_1(0))}{2}, 
	\\ \nonumber
& \hat{a}_1 = \frac{i}{4} \int_0^\infty \bigg(\frac{(u_{0x} - u_1)^2}{2} + 1 - \cos{u_0}  \bigg)dx,
	\\ \label{bjajhatexplicit}
& \hat{b}_2 = \bigg(u_{0xx} - u_{1x} + \frac{i}{2}(u_{0x} - u_1)\hat{a}_1 - \frac{\sin u_0}{2}\bigg)\bigg|_{x=0}.
\end{align}

\begin{theorem}[Properties of $A(k)$ and $B(k)$]\label{ABth}
Suppose $\{g_0(t), g_1(t)\}$ satisfy (\ref{gjassump}) for some integers $m \geq 1$, $n \geq 1$, and $N_t \in \Z$.
Then the spectral functions  $A(k)$ and $B(k)$ defined in (\ref{abABdef}) have the following properties:
\begin{enumerate}[$(a)$]
\item $A(k)$ and $B(k)$ are continuous for $k \in \bar{D}_+$ and analytic for $k \in D_+$.

\item For $j = 1, \dots n$, the derivatives $A^{(j)}(k)$ and $B^{(j)}(k)$ are well-defined and continuous on $\bar{D}_+ \setminus \{0\}$.

\item There exist complex constants $\{A_i, B_i, \hat{A}_i, \hat{B}_i\}_{i=1}^m$ such that the following expansions hold uniformly for each $j = 0, 1, \dots n$:
\begin{align*}\nonumber
\begin{pmatrix} B^{(j)}(k) \\ A^{(j)}(k) \end{pmatrix}
= \begin{pmatrix} \frac{d^j}{dk^j}\big(\frac{B_1}{k} + \cdots + \frac{B_m}{k^m}\big)
\\
\frac{d^j}{dk^j}\big(1 + \frac{A_1}{k} + \cdots + \frac{A_m}{k^m}\big)
\end{pmatrix} + O\bigg(\frac{1}{k^{m+1}}\bigg),\quad k \to \infty, \ k \in \bar{D}_+,
\end{align*}
and
\begin{align*}
\begin{pmatrix} B^{(j)}(k) \\ A^{(j)}(k) \end{pmatrix}
= \mathcal{G}_0(0) \begin{pmatrix} \frac{d^j}{dk^j}\big(\hat{B}_1k + \cdots + \bar{B}_mk^m\big) \\
\frac{d^j}{dk^j}\big(1 + \hat{A}_1k + \cdots + \hat{A}_mk^m\big)
\end{pmatrix} + O\big(k^{m+1-2j}\big),\quad k \to 0, \ k \in \bar{D}_+,
\end{align*}
where the matrix $\mathcal{G}_0(t)$ is defined in (\ref{G0tdef}).

\item $A(k) = \overline{A(-\bar{k})}$ and $B(k) = \overline{B(-\bar{k})}$ for $k \in \bar{D}_+$.

\item $A(k)\overline{A(\bar k)}+B(k)\overline{B(\bar k)}= 1$ for $k \in \R \cup \{|k| = 1\}$.

\end{enumerate}
\end{theorem}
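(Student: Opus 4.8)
The plan is to obtain all five statements by specializing the $t$-part results of Section~\ref{tpartapp} to $t = 0$, exactly as Theorem~\ref{abth} is deduced from the $x$-part results. The point to keep in mind throughout is that $S(k) = T(0,k)$ and, reading off the second column of the matrix in (\ref{abABdef}), $\bigl(\begin{smallmatrix} B(k) \\ A(k)\end{smallmatrix}\bigr) = [T(0,k)]_2$, so that every assertion about $A$ and $B$ is an assertion about the second column of $T(0,\cdot)$, and the relevant part of $\bar D_+$ is $\bar D_1$ near $k = \infty$ and $\bar D_3$ near $k = 0$.

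First I would note that part~$(a)$ follows from Theorem~\ref{tth1}$(c)$, which gives that $[T(t,\cdot)]_2$ is continuous on $\bar D_+ \setminus \{0\}$ and analytic on $D_+$; to include $k = 0$ I would invoke Corollary~\ref{tcor} with $j = 0$ and $t = 0$, which shows $[T(0,k)]_2 = \mathcal{G}_0(0)[\hat T_p(0,k)]_2 + O(|k|^{m+1})$ as $k \to 0$ in $\bar D_3$, so $A$ and $B$ have finite limits at the origin and extend continuously to $\bar D_+$. Part~$(b)$ is immediate from Theorem~\ref{tth1}$(e)$. For the expansions in part~$(c)$: evaluating Theorem~\ref{tth2} at $t = 0$ gives $|\partial_k^j(T(0,k) - T_p(0,k))| \le C|k|^{-m-1}$ for $k \in \bar D_1$; writing $T_p(0,k) = I + \sum_{i=1}^{m+1} T_i(0)/k^i$ and absorbing the $i = m+1$ term, together with all its $k$-derivatives, into the error yields the $k \to \infty$ expansion with $B_i$ and $A_i$ the $(12)$- and $(22)$-entries of $T_i(0)$. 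Likewise, Corollary~\ref{tcor} at $t = 0$ gives $|\partial_k^j(T(0,k) - \mathcal{G}_0(0)\hat T_p(0,k))| \le C|k|^{m+1-2j}$ for $k \in \bar D_3$; expanding $\mathcal{G}_0(0)\hat T_p(0,k) = \mathcal{G}_0(0)\bigl(I + \sum_{i=1}^{m+1}\hat T_i(0)k^i\bigr)$ and absorbing the $i = m+1$ term (whose $j$-th derivative is $O(k^{m+1-j}) \subseteq O(k^{m+1-2j})$ as $k \to 0$) produces the $k \to 0$ expansion, with $\mathcal{G}_0(0)$ factored out front and $\hat B_i$, $\hat A_i$ the corresponding entries of $\hat T_i(0)$. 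Explicit low-order values of these constants can then be recorded from the formulas for $T_1, T_2, \hat T_1, \hat T_2$ given before the theorem, in parallel with (\ref{bjajexplicit})--(\ref{bjajhatexplicit}).

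Parts~$(d)$ and~$(e)$ are purely algebraic. Composing the two symmetries in Theorem~\ref{tth1}$(h)$ gives $T(t,k) = \overline{T(t,-\bar k)}$ entrywise; since $k \mapsto -\bar k$ preserves $D_+$, evaluating at $t = 0$ and comparing the $(12)$- and $(22)$-entries of $S(k) = \overline{S(-\bar k)}$ yields $B(k) = \overline{B(-\bar k)}$ and $A(k) = \overline{A(-\bar k)}$ on $\bar D_+$. For~$(e)$, a direct computation from (\ref{abABdef}) gives $\det S(k) = A(k)\overline{A(\bar k)} + B(k)\overline{B(\bar k)}$, so the identity follows from Theorem~\ref{tth1}$(g)$, which holds on $(\bar D_+ \cap \bar D_-)\setminus\{0\} = (\R \cup \{|k|=1\})\setminus\{0\}$, with $k = 0$ covered by the continuity from~$(a)$.

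I do not expect a genuine obstacle here, since all the analytic work has been carried out in the earlier theorems; the only thing requiring care is the bookkeeping: tracking which column of $T$ carries $\{A,B\}$, matching the estimate domains ($\bar D_1$ near $\infty$, $\bar D_3$ near $0$, both lying in $\bar D_+$) with the column-wise statements of the $t$-part theorems, and remembering that the behavior at the singular point $k = 0$ is supplied by Corollary~\ref{tcor} rather than by Theorem~\ref{tth1} alone.
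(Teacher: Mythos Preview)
Your proposal is correct and follows exactly the same approach as the paper, which simply states that the theorem follows upon setting $t=0$ in Theorem~\ref{tth1}, Theorem~\ref{tth2}, and Corollary~\ref{tcor}. You have merely (and correctly) spelled out the bookkeeping that the paper leaves implicit, including the useful observation that continuity of $A,B$ at $k=0$ requires Corollary~\ref{tcor} rather than Theorem~\ref{tth1} alone.
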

\begin{proof} 
The theorem follows upon setting $t = 0$ in Theorem \ref{tth1}, Theorem \ref{tth2}, and Corollary \ref{tcor}.
\end{proof}

Evaluation of the coefficients $T_j(t)$ and $\hat{T}_j(t)$ at $t = 0$ gives explicit formulas for the coefficients $\{A_j, B_j, \hat{A}_j, \hat{B}_j\}$ of Theorem \ref{ABth} in terms of $g_0(t)$ and $g_1(t)$. The first few coefficients are given by 
\begin{align}\nonumber
& B_1 = \frac{i(g_1(0) + g_{0t}(0))}{2}, 
	\\\nonumber
& A_1 = -\frac{i}{4} \int_0^\infty \bigg(\frac{(g_1 + g_{0t})^2}{2} + \cos{g_0} -1\bigg)dt, 
	\\ \label{BjAjexplicit}
& B_2 = \bigg(-g_{0tt} - g_{1t} + \frac{i}{2}(g_1 + g_{0t})A_1 - \frac{\sin g_0}{2}\bigg)\bigg|_{t=0},
\end{align}
and
\begin{align}\nonumber
& \hat{B}_1 = \frac{i(g_1(0) - g_{0t}(0))}{2}, 
	\\\nonumber
& \hat{A}_1 = -\frac{i}{4} \int_0^\infty \bigg(\frac{(g_1 - g_{0t})^2}{2} + \cos{g_0} -1\bigg)dt,
	\\ \label{BjAjhatexplicit}
& \hat{B}_2 = \bigg(g_{0tt} - g_{1t} + \frac{i}{2}(g_1 - g_{0t})\hat{A}_1 + \frac{\sin g_0}{2}\bigg)\bigg|_{t=0}. 
\end{align}

Our last result is concerned with the spectral functions $c(k)$ and $d(k)$ defined in (\ref{cddef}). Note that (\ref{cddef}) implies
$$S(k)^{-1}s(k) = \begin{pmatrix}
\overline{d(\bar{k})} 	&	c(k)	\\
-\overline{c(\bar{k})}	&	d(k)
\end{pmatrix}, \qquad k \in \R.$$

\begin{definition}\upshape
We say that the initial and boundary values $u_0, u_1, g_0, g_1$ are {\it compatible with equation (\ref{sg}) to order $m$ at $x=t=0$} if all of the following relations which involve derivatives of at most $m$th order of $u_0, g_0$ and at most $(m-1)$th order of $u_1, g_1$ hold:
\begin{align*}
& g_0(0) = u_0(0), \quad g_0'(0) = u_1(0), \quad g_1(0) = u_0'(0), \quad g_1'(0) = u_1'(0), 
	\\
&g_0''(0) - u_0''(0) + \sin u_0(0) = 0, \quad g_1''(0) - u_0'''(0) + (\sin u_0)'(0) = 0, \quad \text{etc.}
\end{align*}
\end{definition}

\begin{theorem}[Properties of $c(k)$ and $d(k)$]\label{cdth}
Suppose $\{u_j(x), g_j(t)\}_0^1$ satisfy (\ref{ujassump}) and (\ref{gjassump}) for some given integers $m \geq 1$, $n \geq 1$, $N_x \in \Z$, and $N_t \in \Z$. 
Suppose $\{u_j(x), g_j(t)\}_0^1$ are compatible with equation (\ref{sg}) to order $m+1$ at $x=t=0$.
Then the spectral functions  $c(k)$ and $d(k)$ defined in (\ref{cddef}) have the following properties:
\begin{enumerate}[$(a)$]
\item $c(k)$ is continuous for $k \in \bar{D}_1 \cup \R$ and analytic for $k \in D_1$.

\item $d(k)$ is continuous for $k \in \bar{D}_2 \cup \R$ and analytic for $k \in D_2$.

\item For $j = 1, \dots n$, the derivatives $c^{(j)}(k)$ and $d^{(j)}(k)$ are well-defined and continuous on $(\bar{D}_1 \cup \R) \setminus \{0\}$ and $(\bar{D}_2 \cup \R) \setminus \{0\}$, respectively.

\item There exist complex constants $\{d_i, \hat{d}_i\}_{i=1}^m$ such that the following expansions hold uniformly for each  $j = 0, 1, \dots, n$:
\begin{subequations}\label{cdexpansions}
\begin{align}\label{cdexpansionsa}
& c^{(j)}(k) = O\big(k^{-m-1}\big), \qquad k \to \infty, \ k \in \bar{D}_1,
	\\\label{cdexpansionsb}
& d^{(j)}(k) = \frac{d^j}{dk^j}\bigg(1 + \frac{d_1}{k} + \cdots + \frac{d_m}{k^m}\bigg) + O\big(k^{-m-1}\big), \qquad |k| \to \infty, \ k \in \R,	
\end{align}
and
\begin{align}\label{cdexpansionsc}
& c^{(j)}(k) = O\big(k^{m+1-2j}\big), \qquad |k| \to 0, \ k \in \R,
	\\\label{cdexpansionsd}
& d^{(j)}(k) = (-1)^{N_x - N_t}\frac{d^j}{dk^j}\big(1 + \hat{d}_1k + \cdots + \hat{d}_mk^m\big) + O\big(k^{m+1-2j}\big), \qquad k \to 0, \ k \in \bar{D}_2.	
\end{align}
\end{subequations}

\item $c(k) = \overline{c(-\bar{k})}$ for $k \in \bar{D}_1 \cup \R$ and $d(k) = \overline{d(-\bar{k})}$ for $k \in \bar{D}_2 \cup \R$.

\item $|c(k)|^2 + |d(k)|^2 = 1$ for $k \in \R$.

\end{enumerate}
\end{theorem}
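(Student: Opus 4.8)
The plan is to derive each item of Theorem~\ref{cdth} from the already-established properties of $a,b,A,B$ together with the algebraic definitions $c = bA - aB$ and $d = a\overline{A(\bar k)} + b\overline{B(\bar k)}$. Items $(a)$, $(b)$, $(c)$, $(e)$, and $(f)$ require essentially no new work. For $(a)$: $b$ and $a$ are continuous on $\bar{\C}_+$ and analytic in $\C_+$ by Theorem~\ref{abth}$(a)$, while $A$ and $B$ are continuous on $\bar D_+ \supset \bar D_1$ and analytic on $D_+ \supset D_1$ by Theorem~\ref{ABth}$(a)$; since $D_1 \subset \C_+$, the product combination $c = bA - aB$ inherits continuity on $\bar D_1 \cup \R$ and analyticity on $D_1$. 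Item $(b)$ is analogous using that $\overline{A(\bar\cdot)}, \overline{B(\bar\cdot)}$ are continuous on $\bar{\C}_- \cap \bar D_+^{\text{conj}}$, so $d$ is continuous on $\bar D_2 \cup \R$ and analytic on $D_2$. Item $(c)$ follows by the Leibniz rule from Theorem~\ref{abth}$(b)$ and Theorem~\ref{ABth}$(b)$, noting that $k = 0$ must be excluded. Item $(e)$ is immediate from the symmetries $a(k) = \overline{a(-\bar k)}$, etc.\ (Theorem~\ref{abth}$(d)$, Theorem~\ref{ABth}$(d)$) combined with the analogous elementary symmetry of the conjugation operation; and $(f)$ follows from the Wronskian-type relations: on $\R$ one has $|a|^2 + |b|^2 = 1$, $A\overline{A(\bar k)} + B\overline{B(\bar k)} = 1$, and the determinant relation $\det(S^{-1}s) = 1$, which unpacks precisely to $|c|^2 + |d|^2 = 1$ as indicated by the displayed form of $S(k)^{-1}s(k)$.

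The substance of the theorem is item $(d)$, and within it the key point is the vanishing statement \eqref{cdexpansionsa} and \eqref{cdexpansionsc} for $c(k)$. For the expansion of $d(k)$ as $k \to \infty$ in \eqref{cdexpansionsb}: Theorem~\ref{abth}$(c)$ gives $a(k) = 1 + a_1/k + \cdots + a_m/k^m + O(k^{-m-1})$ and $b(k) = b_1/k + \cdots + O(k^{-m-1})$, and Theorem~\ref{ABth}$(c)$ gives analogous expansions for $A, B$; multiplying the truncated series and collecting powers of $1/k$ produces coefficients $d_i$ that are polynomial expressions in $\{a_i, b_i, A_i, B_i\}$, and the error terms combine (using that all four functions and their first $n$ derivatives are bounded near $k = \infty$ on the relevant domains, and differentiation of $O(k^{-m-1})$ against an analytic background is controlled by a Cauchy estimate along $\R$) to give $O(k^{-m-1})$ for $d^{(j)}$. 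The $k \to 0$ expansions \eqref{cdexpansionsb}'s analog \eqref{cdexpansionsd} is the same calculation using the second (small-$k$) expansions in Theorems~\ref{abth}$(c)$ and~\ref{ABth}$(c)$; here the prefactors $G_0(0)$ and $\mathcal G_0(0)$ both appear, and since $\{u_j, g_j\}$ are compatible at the origin we have $u_0(0) = g_0(0)$, hence $G_0(0) = (-1)^{N_x - N_t}\mathcal G_0(0)$ after accounting for the $(-1)^{N_x}$ and $(-1)^{N_t}$ normalization factors; the matrix products $G_0(0)^{-1}\mathcal G_0(0)$-type cancellations in the $d$-combination leave exactly the scalar $(-1)^{N_x - N_t}$ times a polynomial in $k$, which is the claimed form. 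The error estimate $O(k^{m+1-2j})$ matches the worst of the error terms from the two eigenfunction expansions.

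The main obstacle, and the one deserving real care, is proving that the asymptotic expansion of $c(k) = b(k)A(k) - a(k)B(k)$ vanishes to all orders (i.e.\ to order $m$) as $k \to \infty$ and as $k \to 0$, under the hypothesis of compatibility to order $m+1$ at the origin. The idea is to work at the level of the eigenfunctions: recall $s(k) = X(0,k)$ and $S(k) = T(0,k)$. Introduce the eigenfunction $\mu$ of the full Lax pair \eqref{lax} normalized at $(x,t) = (\infty, 0)$ (call it built from $X$) and the one normalized at $(0,\infty)$ (built from $T$); the quantity $c(k)$ is, up to known factors, an entry of the matrix relating these two solutions, and the compatibility of the data to order $m+1$ means that the formal power series solutions $X_{formal}(0,k)$ and $T_{formal}(0,k)$ of the $x$- and $t$-parts agree to order $m$ — precisely because the recursions \eqref{xrecursive}--\eqref{xrecursive2} for $X_j$ and \eqref{trecursive}--\eqref{trecursive2} for $T_j$ are driven by $u_0, u_{0x}, u_1, \ldots$ at $x = 0$ and by $g_0, g_{0t}, g_1, \ldots$ at $t = 0$ respectively, and the compatibility relations identify these data (and their derivatives) with each other at the origin. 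Concretely I would: (i) show by induction on $j$ that the compatibility conditions force $X_j(0) = T_j(0)$, $Z_j(0) = Z_j^{(t)}(0)$, $W_j(0) = W_j^{(t)}(0)$ for $j = 1, \ldots, m+1$ (the $x$-coefficients from Section~\ref{xpartapp} equal the $t$-coefficients from Section~\ref{tpartapp}); (ii) conclude that the polynomial parts $X_p(0,k)$ and $T_p(0,k)$ of the eigenfunctions, hence also $s(k)$ and $S(k)$ up to errors $O(k^{-m-1})$ (from Theorem~\ref{xth2}, \ref{tth2}) coincide to that order, so that the off-diagonal combination of $S(k)^{-1}s(k)$, which is $c(k)$, is $O(k^{-m-1})$ together with its first $n$ derivatives; (iii) repeat verbatim at $k = 0$ using the hatted eigenfunctions $\hat X, \hat T$, Theorems~\ref{xth3}, \ref{tth3}, and Corollaries~\ref{xcor}, \ref{tcor} — here the $G_0(0)$ versus $\mathcal G_0(0)$ bookkeeping again produces the $(-1)^{N_x - N_t}$ but, because $c$ is an off-diagonal entry, the scalar prefactors cancel and leave a genuine $O(k^{m+1-2j})$. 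The delicate bookkeeping is matching the recursion data across the two Lax-pair parts using the compatibility list; everything else is Leibniz-rule algebra plus the estimates already proved.
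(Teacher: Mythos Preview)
Your handling of items $(a)$--$(c)$, $(e)$, $(f)$ and the expansions \eqref{cdexpansionsb}, \eqref{cdexpansionsd} is fine and matches the paper. The problem is your argument for the vanishing of the $c$-expansion coefficients.

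Your proposed step (i), showing $X_j(0) = T_j(0)$ for $j = 1,\dots,m+1$, is simply false, and the whole approach collapses here. The diagonal parts of $X_j(0)$ and $T_j(0)$ are not determined by local data at the origin: for instance $(X_1(0))_{22} = a_1$ and $(T_1(0))_{22} = A_1$ are integrals over all of $[0,\infty)$ involving $u_0,u_1$ and $g_0,g_1$ respectively (see \eqref{bjajexplicit}, \eqref{BjAjexplicit}), and these are \emph{not} made equal by compatibility at the origin. Indeed the paper computes $d_1 = a_1 + \bar A_1$ explicitly in \eqref{d1explicit} and notes it is generally nonzero; if $a_1 = A_1$ (which is purely imaginary) held, $d_1$ would vanish. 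What actually happens for $c_2 = b_2 - B_2 + b_1 A_1 - B_1 a_1$ is that the terms containing the nonlocal quantities $a_1, A_1$ cancel against each other only after using $b_1 = B_1$; it is \emph{not} the case that $b_2 = B_2$ separately. So ``$s(k)$ and $S(k)$ coincide to order $m$'' is wrong --- only the off-diagonal entry of $S(k)^{-1}s(k)$ vanishes to that order, which is precisely the nontrivial thing to prove.

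The paper's route is genuinely different and more delicate. It introduces the $t$-part solution $F(t,k) = U(t,k)e^{-i\theta_2 t\hat\sigma_3}s(k)$ normalized so that $F(0,k) = s(k)$, writes its formal expansion with coefficients $F_j(t)$ and exponentially-weighted coefficients $G_j(t)$, and shows via the identity $F = T e^{-i\theta_2 t\hat\sigma_3}(S^{-1}s)$ that $[G_j(t)]_2 = \sum_{i=0}^{j-1}[T_i(t)]_1 c_{j-i}$. Thus $c_j = 0$ for all $j$ follows once one shows $G_j \equiv 0$. To prove the latter, the paper constructs an actual local $C^{m+1}$ sine-Gordon solution $u(x,t)$ near the origin (by extending $u_0,u_1$ and solving the Cauchy problem) and uses the compatibility of the full Lax pair \eqref{lax} to build a formal solution $\mu_{formal}$ of \emph{both} parts simultaneously with $\mu_j(0,0) = X_j(0)$; compatibility of the data to order $m+1$ then forces the restriction $\mu_{formal}(0,t,k)$ to agree at $t=0$ (to all relevant derivative orders) with $F_{formal}$, whence $G_j(0) = 0$. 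The recursion then gives $G_j \equiv 0$. This mechanism --- passing through an auxiliary eigenfunction and an actual local solution of \eqref{sg} --- is the missing idea in your proposal; a direct induction matching $X_j(0)$ against $T_j(0)$ cannot work because those objects contain different global information.
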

\begin{proof} 
We only have to prove the expansions in (\ref{cdexpansions}), because all other properties are direct consequences of Theorem \ref{abth} and Theorem \ref{ABth}. 

Let $\{u_j(x), g_j(t)\}_0^1$ be functions satisfying (\ref{ujassump}) and (\ref{gjassump}) for some $m,n \geq 1$, which are compatible with equation (\ref{sg}) to order $m+1$ at $x=t=0$.
Define $\hat{a},\hat{b},\hat{A},\hat{B}$ by
\begin{align*}
\hat{s}(k) = \begin{pmatrix}
\overline{\hat{a}(\bar{k})} 	&	\hat{b}(k)	\\
-\overline{\hat{b}(\bar{k})}	&	\hat{a}(k)
\end{pmatrix}, \qquad
\hat{S}(k) = \begin{pmatrix}
\overline{\hat{A}(\bar{k})} 	&	\hat{B}(k)	\\
-\overline{\hat{B}(\bar{k})}	&	\hat{A}(k)
\end{pmatrix},
\end{align*}
where $\hat{s}(k) := \hat{X}(0,k)$ and $\hat{S}(k) := \hat{T}(0,k)$.
The compatibility relation $u_0(0) = g_0(0)$ implies that $\mathcal{G}_0(0) = (-1)^{N_x - N_t}G_0(0)$, where $G_0(x)$ and $\mathcal{G}_0(t)$ are the matrices defined in (\ref{G0xdef}) and (\ref{G0tdef}). We find
$S^{-1}s = (\mathcal{G}_0(0)\hat{S})^{-1}G_0(0)\hat{s} = (-1)^{N_x - N_t}\hat{S}^{-1}\hat{s}$ and hence 
$$c(k) = (-1)^{N_x - N_t}\big[\hat{b}(k)\hat{A}(k) - \hat{B}(k)\hat{a}(k)\big], \quad
d(k) = (-1)^{N_x - N_t}\big[\hat{a}(k)\overline{\hat{A}(\bar{k})} + \hat{b}(k)\overline{\hat{B}(\bar{k})}\big].$$ Using that 
$$\hat{b}(k) = O(k), \quad \hat{a}(k) = 1 + O(k), \quad \hat{B}(k) = O(k), \quad \hat{A}(k) = 1 + O(k),$$
as $k \to 0$, this gives $\lim_{k\to0}d(k) = (-1)^{N_x - N_t}$ and $\lim_{k\to0}c(k) = 0$. 
The asymptotic formulas (\ref{cdexpansionsb}) and (\ref{cdexpansionsd}) for $d(k)$ now follow from the expansions of $a,b,A,B$ established in Theorems \ref{abth} and \ref{ABth}. These expansions also imply that that there exist complex constants $\{c_i, \hat{c}_i\}_{i=1}^m$ such that, for $j = 0, 1, \dots, n$,
\begin{align}\label{cexpansioninfty}
& c^{(j)}(k) = \frac{d^j}{dk^j}\bigg(\frac{c_1}{k} + \cdots + \frac{c_m}{k^m}\bigg) + O\bigg(\frac{1}{k^{m+1}}\bigg) \quad
 \text{uniformly as $k \to \infty$, $k \in \bar{D}_1$},
\end{align}
and
\begin{align}\label{cexpansionzero}
& c^{(j)}(k) = \frac{d^j}{dk^j}\bigg(\hat{c}_1k + \cdots + \hat{c}_m k^m\bigg) + O(k^{m+1-2j}) \quad \text{as $k \to 0$, $k \in \R$}.
\end{align}
It only remains to prove that the coefficients $c_i$ and $\hat{c}_i$ vanish.

 Let us first consider the $c_i$. 
The coefficients $c_i$ can be expressed in terms of the coefficients of the expansions of $a,b,A,B$.
For example, the computation
\begin{align*}
c(k) = &\; b(k)A(k) - B(k)a(k) = (b_1k^{-1} + b_2 k^{-2} + \cdots)(1 + A_1k^{-1} + \cdots)
	\\
& - (B_1k^{-1} + B_2 k^{-2} + \cdots)(1 + a_1k^{-1} + \cdots)
	\\
= &\; (b_1 - B_1)k^{-1} + (b_2 - B_2 + b_1A_1 - B_1 a_1) k^{-2} + \cdots, \qquad k \to \infty,
\end{align*}
shows that 
$$c_1 = b_1 - B_1, \qquad c_2 = b_2 - B_2 + b_1A_1 - B_1 a_1.$$
It follows immediately from the explicit expressions in (\ref{bjajexplicit}) and (\ref{BjAjexplicit}) together with the assumption of compatibility at $x =t=0$ that $c_1 = c_2 = 0$. Using the more general formula
\begin{align}\label{cjexplicit}
c_j = b_j - B_j + \sum_{r= 1}^{j-1}(b_{j-r}A_r - B_{j-r}a_r), \qquad j = 1, 2, \dots.
\end{align}
we can show that $c_j = 0$ for some higher values of $j$; however, the computations quickly become exceedingly complicated as $j$ increases. In order to prove (\ref{cdexpansionsa}) when $m \geq 3$, we therefore use a different argument. 

Let $X,Y,T,U$ denote the eigenfunctions defined in (\ref{XYdef}) and (\ref{TUdef}).
Define a solution $F(t,k)$ of the $t$-part (\ref{tpartF}) for $k \in \R$ by
$$F(t,k) = U(t,k)e^{-i\theta_2 t\hat{\sigma}_3}s(k).$$%F(t,k) = \mu_3(0,t,k) = \mu_2(0,t,k) e^{-i\theta_2 t\hat{\sigma}_3}s(k)$. 
Then $F(0,k) = s(k) = X(0,k)$. Let
$$F_{formal}(t,k) = I + \frac{F_1(t)}{k} + \frac{F_2(t)}{k^2} + \cdots 
+ \bigg(\frac{G_1(t)}{k} + \frac{G_2(t)}{k^2} + \cdots \bigg) e^{2i\theta_2t\sigma_3}$$
denote a formal power series solution of (\ref{tpartF}) normalized to equal $s(k)$ at $t = 0$. The coefficients $\{F_j(t)\}$ and $\{G_j(t)\}$ satisfy the recursive relations (\ref{trecursive}) and (\ref{trecursive2}) respectively, and the initial condition
$F_j(0) + G_j(0) = X_j(0)$, where $X_j(x)$ are the coefficients in the formal power series (\ref{Xformaldef}) approximating $X(x,k)$.
That is, $\{F_j(t)\}$ and $\{G_j(t)\}$ are determined by the system
\begin{align}\label{FjGjsystem}
\begin{cases}
F_{j+1}^{(o)} = -2 i \sigma_3\big(-\partial_tF_{j}^{(o)}-\frac{i}{2}\sigma_3 F_{j-1}^{(o)}+\mathsf{V}_0 F_j^{(d)}+\mathsf{V}_1^{(o)} F_{j-1}^{(d)}+\mathsf{V}_1^{(d)} F_{j-1}^{(o)}\big),
	\\
\partial_t F_{j+1}^{(d)} = \mathsf{V}_0 F_{j+1}^{(o)}+\mathsf{V}_1^{(o)} F_j^{(o)}+\mathsf{V}_1^{(d)} F_j^{(d)},
	\\
G_{j+1}^{(d)} = -2 i \sigma_3\big(-\partial_tG_{j}^{(d)}-\frac{i}{2}\sigma_3 G_{j-1}^{(d)}+\mathsf{V}_0 G_j^{(o)}+\mathsf{V}_1^{(o)} G_{j-1}^{(o)}+\mathsf{V}_1^{(d)} G_{j-1}^{(d)}\big),
	\\
\partial_t G_{j+1}^{(o)} = \mathsf{V}_0 G_{j+1}^{(d)}+\mathsf{V}_1^{(o)} G_j^{(d)}+\mathsf{V}_1^{(d)} T_j^{(o)},
	\\
F_j(0) + G_j(0) = X_j(0),
\end{cases} \quad  j \geq 1,
\end{align}
and the assignments $F_{-1} = G_{-1} = G_0 = 0$ and $F_0 = I$.

\begin{lemma}
$F_{formal}$ approximates $F$ to order $m$ as $k \to \infty$ in the sense that the function
\begin{align}\label{Fpdef}
&F_p(t,k) = I + \frac{F_1(t)}{k} + \cdots + \frac{F_{m}(t)}{k^{m}}
 + \bigg(\frac{G_1(t)}{k} + \cdots + \frac{G_{m}(t)}{k^{m}}\bigg) e^{2i\theta_2t\sigma_3},
\end{align}
is well-defined and
\begin{align}\label{FFpestimate}
& |F(t,k) - F_p(t,k)| \leq \frac{C}{|k|^{m+1}}, \qquad t \geq 0, \  k \in \R, \ |k| > 1.
\end{align}
\end{lemma}
\stepproofbegin
This can be proved directly by the same type of arguments that gave Theorems \ref{xth2} and \ref{tth2}. We instead choose to relate $F$ to $T$ and then appeal to the results of Theorem \ref{tth2} for $T$.  We have
\begin{align*}
F(t,k) = T(t,k) e^{-i\theta_2 t\hat{\sigma}_3}(S(k)^{-1}s(k)), \qquad t \geq 0, \ k \in \R.
\end{align*}
That is,
$$F(t,k) = \mathcal{F}(t,k) + \mathcal{G}(t,k) e^{2it\theta_2\sigma_3},$$
where 
$$\mathcal{F}(t,k) = \begin{pmatrix}[T]_1\overline{d(\bar{k})} & [T]_2d(k) \end{pmatrix}, \qquad
\mathcal{G}(t,k) = \begin{pmatrix}-[T]_2\overline{c(\bar{k})} & [T]_1c(k) \end{pmatrix}.$$
Note that $\mathcal{F}(0,k) + \mathcal{G}(0,k) = s(k)$. Thus, by uniqueness of the solution of (\ref{FjGjsystem}), we have
\begin{align}\label{FTidentity}
F_{formal}(t,k) =  \mathcal{F}_{formal}(t,k) + \mathcal{G}_{formal}(t,k) e^{2it\theta_2\sigma_3},
\end{align}
where $\mathcal{F}_{formal}$ and $\mathcal{G}_{formal}$ denote the formal expansions of $\mathcal{F}$ and $\mathcal{G}$ as $k \to \infty$. Since the expansion $T_p(t,k)$ of $T$ is well-defined, so is $F_p(t,k)$.

By (\ref{FTidentity}), the second columns of $I + \frac{F_1(t)}{k} + \cdots $ and $\frac{G_1(k)}{k} + \cdots $ are given by the formal expansions as $k \to \infty$ of $[T(t,k)]_2 d(k)$ and $[T(t,k)]_1c(k)$, respectively; hence
\begin{align}\label{FjtGjt}
[F_j(t)]_2 = \sum_{i=0}^j [T_i(t)]_2 d_{j-i}, \quad [G_j(t)]_2 = \sum_{i = 0}^{j-1} [T_i(t)]_1 c_{j-i}, \qquad j = 1, \dots, m,
\end{align}
where $T_0(t) = I$ and $d_0 =1$. Setting $F_0(t) = I$, we can write
$$[F - F_p]_2 = \bigg([T]_1c(k) - \sum_{j=1}^m \frac{[G_j(t)]_2}{k^j}\bigg)e^{-2i\theta_2 t} 
+ [T]_2d(k) - \sum_{j=0}^m \frac{[F_j(t)]_2}{k^j},$$
which leads to the estimate
\begin{align}\nonumber
|[F - F_p]_2| \leq &\; |[T - T_p]_1| |c(k)| + \bigg|[T_p]_1c(k) - \sum_{j=1}^m \frac{[G_j(t)]_2}{k^j}\bigg| 
+  |[T - T_p]_2| |d(k)|
	\\\label{FFp2est}
&  +  \bigg|[T_p]_2d(k) -  \sum_{j=0}^m \frac{[F_j(t)]_2}{k^j}\bigg|, \qquad t \geq 0, \ k \in \R, \ |k| > 1.
\end{align}
By (\ref{varphiasymptoticsa-t}), 
\begin{align}\label{T1Tp1}
|[T - T_p]_1| |c(k)| + |[T - T_p]_2| |d(k)| \leq \frac{C}{|k|^{m+1}(1+t)^n}, \qquad t \geq 0, \ k \in \R, \ |k| > 1.
\end{align}
Since $|T_j(t)| \leq C$ for $t \geq 0$ and $j = 1, \dots, m+1$ (cf. (\ref{Fjbounds})), the second column of (\ref{FFpestimate}) follows from equations (\ref{FFp2est}) and (\ref{T1Tp1}). The proof of the first column is similar.
\proofendcontinue

\begin{lemma}
All the coefficients $G_j(t)$ in (\ref{Fpdef}) vanish identically, i.e., $G_1(t) = \cdots = G_m(t) = 0$.
\end{lemma}
\stepproofbegin
Suppose $G_i(t) = 0$ for $i = 1, \dots, j$. Then (\ref{FjGjsystem}) implies that $G_{j+1}(t)$ is off-diagonal and constant, so that $G_{j+1}(t) = G_{j+1}(0)$. It is therefore enough to show that $G_j(0) = 0$, or, equivalently, that $F_j(0) = X_j(0)$ for $j = 1, \dots, m$. 

Starting with $F_{-1} = G_{-1} = G_0 = 0$ and $F_0 = I$, the system (\ref{FjGjsystem}) with $j = 0$ gives $F_1^{(o)}(t) = - 2i\sigma_3\mathsf{V}_0(t)$ and $G_1^{(d)}(t) =  0$. In particular, since $F_1(0) + G_1(0) = X_1(0)$, we have $F_1^{(d)}(0) = X_1^{(d)}(0)$. Since $X_1^{(o)}(0) = - 2i\sigma_3\mathsf{U}_0(0)$, the requirement $F_1(0) = X_1(0)$ therefore reduces to the compatibility condition $\mathsf{U}_0(0) = \mathsf{V}_0(0)$, which holds because $u_{0x}(0) = g_1(0)$ and $u_1(0) = g_{0t}(0)$.

The system (\ref{FjGjsystem}) with $j = 1$ now gives
$$F_2^{(o)}(0) = - 2i\sigma_3(2i\sigma_3\partial_t\mathsf{V}_0(0) + \mathsf{V}_0(0)X_1^{(d)}(0) + \mathsf{V}_1^{(o)}(0)), \qquad
G_2^{(d)}(t) = 0.$$
In particular, since $F_2(0) + G_2(0) = X_2(0)$, we have $F_2^{(d)}(0) = X_2^{(d)}(0)$.
Since
$$X_2^{(o)}(0) = - 2i\sigma_3(2i\sigma_3\partial_x\mathsf{U}_0(0) + \mathsf{U}_0(0)X_1^{(d)}(0) + \mathsf{U}_1^{(o)}(0)),$$
the requirement $F_2^{(o)}(0) = X_2^{(o)}(0)$ reduces to the compatibility condition 
$$2i\sigma_3\partial_t\mathsf{V}_0(0) + \mathsf{V}_0(0)X_1^{(d)}(0) + \mathsf{V}_1^{(o)}(0)
= 2i\sigma_3\partial_x\mathsf{U}_0(0) + \mathsf{U}_0(0)X_1^{(d)}(0) + \mathsf{U}_1^{(o)}(0),$$
which holds because $u_0(0) = g_0(0)$, $u_{0x}(0) = g_1(0)$, $u_1(0) = g_{0t}(0)$, $u_{1x}(0) = g_{1t}(0)$, and $g_{0tt}(0) - u_{0xx}(0)  + \sin u_0(0) = 0$.

This completes the proof if $m = 1$ or $m = 2$. In order to treat higher values of $m$, we let $u(x,t)$ denote a $C^{m+1}$-solution of equation (\ref{sg}) defined for all small $x \geq 0$ and $t \geq 0$ such that $u(x,0) = u_0(x)$ and $u_t(x,0) = u_1(x)$. Such a solution can be obtained, for example, by extending $u_0(x)$ and $u_1(x)$, respectively, to the negative real axis and appealing to the existence results for the initial-value problem for (\ref{sg}) (see Appendix B of \cite{BM2008}). 
In general, we have $u(0,t) \neq g_0(t)$ and $u_x(0,t) \neq g_1(t)$ for $t > 0$. However, since $\{u_j(x), g_j(t)\}_0^1$ are compatible with equation (\ref{sg}) to order $m+1$ at $x=t=0$, we have 
\begin{align}\label{u00g00g10}
\begin{cases}
\partial_t^j u(0,0) =  \partial_t^jg_0(0), & j = 1, \dots, m +1,\\ 
\partial_t^j u_x(0,0) =  \partial_t^jg_1(0), & j = 1, \dots, m. 
\end{cases} 
\end{align}

It is possible to define a formal power series solution
\begin{align}\label{muformaldef}
\mu_{formal}(x,t,k) = I + \frac{\mu_1(x,t)}{k} + \frac{\mu_2(x,t)}{k^2} + \cdots,
\end{align}
of the Lax pair (\ref{lax}) such that  $\mu_j(0,0) = X_j(0)$ for $j = 1, \dots, m$. Indeed, substituting (\ref{muformaldef}) into (\ref{lax}) the off-diagonal terms of $O(k^{-j})$ and the diagonal terms of $O(k^{-j-1})$ yield (cf. (\ref{xrecursive}) and (\ref{trecursive}))
\begin{align}\label{murecursive}
\begin{cases}
\mu_{j+1}^{(o)} = -2 i \sigma_3\big(-\partial_x \mu_{j}^{(o)}+\frac{i}{2}\sigma_3 \mu_{j-1}^{(o)}+Q_0 \mu_j^{(d)}+Q_1^{(o)} \mu_{j-1}^{(d)}+Q_1^{(d)} \mu_{j-1}^{(o)}\big),
	\\
\partial_x \mu_{j+1}^{(d)} = Q_0 \mu_{j+1}^{(o)}+Q_1^{(o)} \mu_j^{(o)}+Q_1^{(d)} \mu_j^{(d)}.
	\\
\partial_t \mu_{j+1}^{(d)} = Q_0 \mu_{j+1}^{(o)}+Q_1^{(o)} \mu_j^{(o)}+Q_1^{(d)} \mu_j^{(d)}.
\end{cases}
\end{align}
The two equations for $\mu_{j+1}^{(d)}$ are compatible, because the Lax pair equations in (\ref{lax}) are compatible for every $k$ as a consequence of the fact that $u(x,t)$ satisfies (\ref{sg}); hence the system (\ref{murecursive}) can be solved recursively. Using that $u(x,t)$ is $C^{m+1}$, we find that the coefficients $\mu_j(x,t)$ are well-defined and continuous functions of $x \geq 0$ and $t \geq 0$ near the origin for $j = 1, \dots, m$. 

By evaluating the system (\ref{FjGjsystem}) at $t = 0$, we see that the values of $\partial^l F_{j+1}(0)$ and $\partial^l G_{j+1}(0)$, $l \geq 0$, are uniquely determined from the values of the functions $\{F_{i}(t), G_{i}(t)\}_{i=1}^j$ and their derivatives {\it evaluated at $t = 0$}. Hence it makes sense to say that $\{\partial_t^l F_{j}(0), \partial_t^l G_{j}(0)\}$ satisfies the system (\ref{FjGjsystem}) restricted to $t = 0$ and that this solution is unique. 
Now $\mu_{formal}(0,t,k) = I + \frac{\mu_1(0,t)}{k} + \frac{\mu_2(0,t)}{k^2} + \cdots$, is a formal solution of the $t$-part (\ref{tpartF}) such that $\mu_j(0,0) = X_j(0)$ for each $j$. In view of (\ref{u00g00g10}), it follows that $\partial_t^lF_j(0) = \partial_t^l\mu_j(0,0)$, $\partial^l G_j(0) = 0$ is a solution of the system (\ref{FjGjsystem}) restricted to $t = 0$. By uniqueness, we conclude that $F_j(0) = \mu_j(0,0) = X_j(0)$ and $G_j(0) = 0$ for $j = 1, \dots, m$. 
\proofendcontinue

\begin{lemma}
The coefficients $c_i$ in (\ref{cexpansioninfty}) all vanish, that is, $c_1 = \cdots = c_m = 0$.
\end{lemma}
\stepproofbegin
Using that $G_j(t) = 0$, the first entry of the second equation in (\ref{FjtGjt}) yields
$$0 = c_j + \sum_{i = 1}^{j-1} (T_i(t))_{11} c_{j-i}, \qquad t \geq 0, \ j = 1, \dots, m,$$
For $j = 1$, this equation gives $c_1 = 0$; the equation with $j = 2$ then gives $c_2 = 0$ and so on. 
\proofend

The following lemma completes the proof of the theorem.

\begin{lemma}
The coefficients $\hat{c}_i$ in (\ref{cexpansionzero}) all vanish, that is, $\hat{c}_1 = \cdots = \hat{c}_m = 0$.
\end{lemma}
\stepproofbegin
The proof is analogous to the above proof that the $c_i$ vanish, except that it employs the eigenfunctions $\hat{X}$ and $\hat{T}$ instead of $X$ and $T$. We omit details. 
\end{proof}

\begin{remark}\upshape
The coefficients $d_j$ and $\hat{d}_j$ in Theorem \ref{cdth} are generally nonzero. 
For example, from (\ref{bjajexplicit}) and (\ref{BjAjexplicit}) we see that
\begin{align}\nonumber
d_1 = &\; a_1 + \bar{A}_1
	\\\nonumber
=&\; \frac{i}{4} \int_0^\infty \bigg(-\frac{(u_{0x} + u_1)^2}{2}+ \cos{u_0} -1\bigg)dx
	\\ \label{d1explicit}
& + \frac{i}{4} \int_0^\infty \bigg(\frac{(g_1 + g_{0t})^2}{2} + \cos{g_0} -1\bigg)dt.
\end{align}
%The relation $d = \hat{a}\hat{A}^* + \hat{b}\hat{B}^*$ gives $\hat{d}_1 = \hat{a}_1 + \bar{\hat{A}}_1$. By (\ref{d1explicit}), (\ref{bjajhatexplicit}), and (\ref{BjAjhatexplicit}), this gives $\hat{d}_1 = d_1$.
% $d_1, d_3 \in i\R$ and $d_2, d_4 \in \R$.
In the case when $u_0,u_1,g_0,g_1$ are the initial and boundary values for a quarter-plane solution of (\ref{sg}), then the $d_j$ and $\hat{d}_j$ encode the values of the infinite sequence of conservation laws associated with (\ref{sg}). For example, letting $u(x,t)$  denote the associated solution, we can write $d_1 = \int_\gamma \omega_1$, where the one-form  $\omega_1$  is defined by
$$\omega_1(x,t) = -\frac{i}{4} \bigg(-\frac{(u_x + u_t)^2}{2} + \cos{u} -1\bigg)dx
+ \frac{i}{4} \bigg(\frac{(u_x + u_t)^2}{2} + \cos{u} -1\bigg)dt,$$
and $\gamma$  consists of the half-line $\R_+$ traversed to the left followed by the positive imaginary axis $i\R_+$ traversed upwards. A computation using (\ref{sg}) shows that $\omega_1$  is closed and therefore encodes a conservation law for sine-Gordon. In particular, the contour $\gamma$  can be deformed into any contour going from $(\infty, 0)$ to $(0,\infty)$ without affecting the value of $\int_\gamma \omega_1$.
\end{remark}

\bigskip
\noindent
{\bf Acknowledgement}  {\it Support is acknowledged from the G\"oran Gustafsson Foundation, the European Research Council, Consolidator Grant No. 682537, the Swedish Research Council, Grant No. 2015-05430, and the National Science Foundation of China, Grant No. 11671095.}

\bibliographystyle{plain}
\bibliography{is}

\end{document}